\documentclass[
letterpaper,
  onecolumn,
  colorlinks,
]{preprint}

\usepackage{geometry}
\geometry{legalpaper, margin=1in}

\usepackage{graphicx}
\usepackage{amsmath}
\usepackage{amsthm}
\usepackage{amssymb}
\usepackage{algorithm}
\usepackage{algpseudocode}
\usepackage{bbm}
\usepackage{subcaption}
\usepackage{todonotes}

\usepackage{pgfplots}
\usepackage{tikz}
\usetikzlibrary{decorations.markings,patterns}
\usetikzlibrary{plotmarks}
\pgfplotsset{compat=1.18}

\mathcode`\@="8000
{\catcode`\@=\active \gdef@{\mkern1mu}} 

\algnewcommand\algorithmicofflinerequire{\textbf{Offline Input:}}
\algnewcommand\OfflineRequire{\item[\algorithmicofflinerequire]}
\algnewcommand\algorithmiconlinerequire{\textbf{Online Input:}}
\algnewcommand\OnlineRequire{\item[\algorithmiconlinerequire]}
\algnewcommand\algorithmicofflineensure{\textbf{Offline Output:}}
\algnewcommand\OfflineEnsure{\item[\algorithmicofflineensure]}
\algnewcommand\algorithmiconlineensure{\textbf{Online Output:}}
\algnewcommand\OnlineEnsure{\item[\algorithmiconlineensure]}

\algnewcommand\algorithmicstartoffline{\textbf{Offline Phase}}
\algnewcommand\StartOfflinePhase{\item[\algorithmicstartoffline]}
\algnewcommand\algorithmicstartonline{\textbf{Online Phase (for each \boldmath $\hp$\kern1pt)}}
\algnewcommand\StartOnlinePhase{\item[\algorithmicstartonline]}

\newcommand{\C}{\mathbb{C}}
\newcommand{\R}{\mathbb{R}}

\renewcommand{\L}{\mathbb{L}}
\newcommand{\hz}{\widehat{z}}
\renewcommand{\Re}{\operatorname{Re}}
\renewcommand{\Im}{\operatorname{Im}}
\renewcommand{\imath}{\mathrm{i}}
\newcommand{\dop}{\mathrm{d}}

\newcommand{\bA}{\mathbf{A}}
\newcommand{\bB}{\mathbf{B}}
\newcommand{\bC}{\mathbf{C}}

\newcommand{\bE}{\mathbf{E}}

\newcommand{\bG}{\mathbf{G}}
\newcommand{\bH}{\mathbf{H}}
\newcommand{\bI}{\mathbf{I}}
\newcommand{\bJ}{\mathbf{J}}

\newcommand{\bL}{\mathbf{L}}

\newcommand{\bN}{\mathbf{N}}

\newcommand{\bP}{\mathbf{P}}

\newcommand{\bR}{\mathbf{R}}
\newcommand{\bS}{\mathbf{S}}
\newcommand{\bT}{\mathbf{T}}

\newcommand{\bV}{\mathbf{V}}
\newcommand{\bW}{\mathbf{W}}
\newcommand{\bX}{\mathbf{X}}
\newcommand{\bY}{\mathbf{Y}}


\newcommand{\bb}{\mathbf{b}}
\newcommand{\bc}{\mathbf{c}}

\newcommand{\br}{\mathbf{r}}
\newcommand{\bs}{\mathbf{s}}

\newcommand{\bv}{\mathbf{v}}
\newcommand{\bw}{\mathbf{w}}
\newcommand{\bx}{\mathbf{x}}

\newcommand{\bzero}{\mathbf{0}}
\newcommand{\bell}{\boldsymbol{\ell}}
\newcommand{\bSigma}{\boldsymbol{\Sigma}}

\newcommand{\tbN}{\widetilde{\bN}}
\newcommand{\tbH}{\widetilde{\bH}}


\newcommand{\cD}{\mathcal{D}}
\newcommand{\cE}{\mathcal{E}}

\newcommand{\cO}{\mathcal{O}}
\newcommand{\cP}{\mathcal{P}}

\newcommand{\hp}{\widehat{p}}

\newcommand{\eop}{{\mathrm{e}}}

\newcommand{\pdom}{\Pi}       

\newtheorem{proposition}{Proposition}

\newtheorem{corollary}{Corollary}
\newtheorem{lemma}{Lemma}
\newtheorem{definition}{Definition}
\newtheorem{theorem}{Theorem}
\newtheorem{example}{Example}
\newtheorem{remark}{Remark}

\title{A parametric Keldysh decomposition}
\author{Linus Balicki, Mark Embree, Serkan Gugercin\thanks{Funded by US National Science Foundation grant DMS-2411141.\newline Department of Mathematics, 225 Stanger Street 0123, Virginia Tech, Blacksburg, VA 24061\newline (balicki@vt.edu, embree@vt.edu, gugercin@vt.edu)}}
\date{January 2026}

\begin{document}

\novelty{}

\abstract{Contour integral algorithms seek to compute a small number of eigenvalues located within a bounded region of the complex plane. These methods can be applied to both linear and nonlinear matrix eigenvalue problems.  In the latter case, the foundation of these methods comes from the Keldysh decomposition, which breaks the nonlinear matrix-valued function into two parts: a rational function whose poles match the desired eigenvalues, and a remainder term that is analytic within the target region.  Under contour integration this analytic part vanishes (via Cauchy's theorem), leaving only the component containing the desired eigenvalues.  We propose an extension of the Keldysh decomposition for matrix-valued functions that depend analytically on an additional parameter. We establish key properties of this parametric Keldysh decomposition, and introduce an algorithm for solving parametric nonlinear eigenvalue problems that is based upon it.
}

\maketitle

\section{Introduction}
Let $\bT:\C\rightarrow \C^{n\times n}$ denote a general matrix-valued function.
The \emph{nonlinear eigenvalue problem} (NLEVP) seeks to find an \emph{eigenvalue} $\lambda\in\C$ and nonzero \emph{eigenvector} $\bv\in\C^n$ such that
\begin{equation}
\label{eq:NLEVP}
    \bT(\lambda) \bv = \bzero.
\end{equation}
The linear eigenvalue problem $\bA \bv = \lambda \bv$ with $\bA \in \C^{n \times n}$ can be written in this form by setting $\bT(z) = z \bI - \bA$. Here, we seek all eigenvalues of $\bT$ located within some simply connected, bounded domain $\Omega\subset \C$, meaning that $\Omega$ is a domain (nonempty, open, connected) that is additionally bounded and simply connected. We assume that $\bT$ is analytic throughout $\Omega$ and its boundary $\partial \Omega$, and regular, i.e., $\det(\bT(\cdot)) \not \equiv 0$. The desired eigenvalues are the poles of $\bT(z)^{-1}$ that are located in $\Omega$.\ \  One can approach this problem through a variety of algorithmic strategies, including Newton's method, linearization, and contour integration, as surveyed by G\"uttel and Tisseur~\cite{guttel2017}. We focus on NLEVP solvers that are based on contour integration.  These methods originated with the Sakurai--Sugiura algorithm~\cite{SS03b} for the linear eigenvalue problem.  For simplicity, consider a Hermitian matrix $\bA\in\C^{n\times n}$ with $m$ eigenvalues $\lambda_1,\ldots, \lambda_m$ in $\Omega$ and the remaining eigenvalues $\lambda_{m+1},\ldots, \lambda_n$ exterior to the closure $\overline{\Omega}=\Omega \cup \partial\Omega$, corresponding to a set of orthonormal eigenvectors $\bv_1, \ldots, \bv_n$.\ \ Then the resolvent of $\bA$ can be decomposed as
\begin{equation} \label{eq:symm}
(z\bI-\bA)^{-1} 
= \bigg(\sum_{j=1}^m \frac{1}{z-\lambda_j} \bv_j^{}\bv_j^* \bigg)
 + \bigg(\sum_{j=m+1}^n \frac{1}{z-\lambda_j} \bv_j^{}\bv_j^* \bigg),
\end{equation}
where $\cdot^*$ denotes the conjugate-transpose.
The first sum in \eqref{eq:symm} contains all the poles of the resolvent (eigenvalues of $\bA$) in $\Omega$; the second sum is thus analytic in $\Omega$.  
By integrating around the boundary $\partial\Omega$ and invoking Cauchy's theorem, one can use functions $f$ that are analytic on $\Omega$ to learn about the desired eigenvalues:
\[ \frac{1}{2\pi@\imath} \int_{\partial \Omega} f(z) (z\bI-\bA)^{-1}\, \dop z 
      = \sum_{j=1}^m f(\lambda_j) \bv_j^{} \bv_j^*.
\]
Contour integral methods often use $f(z) = z^k$ to compute moments of the 
desired component of the resolvent.  Several variations proposed in~\cite{brennan2023} use $f(z) = 1/(z-\sigma)^k$, leading to connections with system identification and rational interpolation.

The class of contour integral eigensolvers was extended to NLEVPs by Asakura et al.~\cite{asakura2009} and Beyn~\cite{beyn2012}.  Indeed Beyn motivated his method by invoking a theorem developed by Keldysh%
\footnote{The result was announced in 1951~\cite{keldysh1951}, but only elaborated upon in 1971~\cite{keldysh1971}.  Gohberg~\cite{Goh89} provides historical context for this lapse of time, explaining that Keldysh was consumed during the intervening years with an important clandestine role in the Soviet space program.} 
in the context of linear operators \cite{keldysh1951,keldysh1971}, which can be seen as a generalization of the decomposition~\eqref{eq:symm}; see also the contemporaneous work of Gohberg and Sigal~\cite{GS71}.
When specialized to analytic matrix-valued functions, Keldysh's theorem expresses $\bT(z)^{-1}$ as
\begin{equation}
    \label{eq:Keldyshdecomposition}
    \bT(z)^{-1} = \bH(z) + \bN(z),
\end{equation}
where $\bN$ is analytic in $\Omega$ and $\bH$ is a rational function that encodes eigenvalue and eigenvector information in its poles and residues.  We will discuss details of Keldysh's theorem and how it can be leveraged for solving NLEVPs in the next section.

In many practical applications we are not only interested in solving a single NLEVP, but would like to investigate how eigenvalues and eigenvectors change as we vary a parameter that affects $\bT$.\ \ For example, a \emph{parameteric NLEVP} (pNLEVP) could depend on the time-delay in a delay differential equation, material properties in a mechanical design problem, or a Floquet parameter in a periodic system. To define a pNLEVP we consider $\bT : \C\times \C \rightarrow \C^{n \times n}$, and assume that we have a parameter set $\cP \subset \C$ and a spectral domain of interest $\Omega \subset \C$. Then we aim to find eigenvalues $\lambda(p) \in \Omega$ and nonzero eigenvectors $\bv(p) \in \C^n$ such that
\begin{equation}
    \label{eq:pNLEVP}
    \bT(\lambda(p),p) \bv(p) = \bzero
\end{equation}
for each parameter $p \in \cP$.\ \ One could approach this task by repeatedly applying any standard NLEVP solver to $\bT(@\cdot@,p):\C \rightarrow \C^{n \times n}$, fixing each desired value of~$p$. This approach will be computationally demanding, especially if we must perform it for many parameter values.  We want to avoid this repetitive procedure, which can also mask insight about how the eigenvalues and eigenvectors evolve with $p$.  We propose a more efficient approach that invests some initial work (offline phase) to compute a compact representation of the pNLEVP from which we can quickly extract approximations to $\lambda(p)$ and $\bv(p)$ for any given $p \in \cP$ (online phase).\ \  Our method is based on an extension of Keldysh's theorem for matrix-valued functions to the parameter-dependent setting. 
We derive a decomposition of the form
\begin{equation*}
    \bT(z,p)^{-1} = \bH(z,p) + \bN(z,p),
\end{equation*}
where for all $p \in \cP$ the poles of $\bH(\cdot,p)$ correspond to the eigenvalues $\lambda(p)$ in $\Omega$, and $\bN(\cdot,p)$ is analytic in $\Omega$.\ \ One of our key contributions is to specify properties of the functions $\bH$ and $\bN$ as functions of both $z$ and $p$ using techniques from the theory of several complex variables.\ \ This novel result provides a powerful theoretical tool for analyzing pNLEVPs and forms the basis for our proposed numerical scheme.

Many solution approaches for parametric linear and nonlinear eigenvalue problems have been proposed in the literature. 
For example, continuation-based methods seek to track individual or multiple eigenvalues (or invariant subspaces) as the parameter changes~\cite{betcke2011,beyn2011,beyn2010,bindel2008}. At a simplified level, these algorithms treat $\lambda(p)$ as a parameterized curve, using derivative information at past parameter values to approximate $\lambda(p)$ at a new value of $p$. The approximation is then corrected via an iterative algorithm. If $\lambda(p)$ is sufficiently smooth, one expects eigenvalue continuation to be effective. However, the global smoothness of eigenvalue curves is not always guaranteed,  with degeneracies sometimes arising at physically relevant parameter values.  Special care must be taken to handle non-differentiable behavior in continuation algorithms.  Another common solution approach for pNLEVPs leverages polynomial approximations for $\lambda$ \cite{andreev2012,mach2025,pradovera2024}. Again, smoothness of eigenvalues is crucial to guarantee the existence of good approximants. 

Motivated by the parametric Keldysh decomposition, 
our algorithmic approach differs from most of the previously mentioned methods, in that it \emph{approximates an implicit representation of the eigenvalues}. Our method approximates $\bH$ itself, which encodes the desired eigenvalues and eigenvectors, rather than computing an explicit parametrization of a curve of eigenvalues. For new parameter values the eigenvalues of interest can then quickly be extracted from $\bH$.\ \  This approach can deal effectively and automatically with cases where $\lambda$ is known to exhibit nonsmooth behavior. 

\section{Organization and background}
Our two main contributions are Theorem~\ref{theorem:parametricKeldysh}, which introduces the parametric Keldysh decomposition and establishes its key properties, and a numerical framework for solving pNLEVPs that is built upon it.  In this section we briefly recapitulate Keldysh's theorem (Section~\ref{sec:Keldysh}) and contour integral methods (Section~\ref{sec:NLEVP}) for non-parametric problems. Section~\ref{sec:parametricKeldysh} contains the parametric version of Keldysh's theorem. In Sections~\ref{sec:parametricNLEVP} and~\ref{sec:parametricMultipointLoewner} we set up and formulate a novel algorithm based on the parametric Keldysh decomposition for tackling pNLEVPs. The numerical examples in Section~\ref{sec:numericalExamples} demonstrate the effectiveness of our proposed methods. We summarize and highlight several possible directions for future research in Section~\ref{sec:conclusions}.

\subsection{The Keldysh decomposition}
\label{sec:Keldysh}
We begin by discussing Keldysh's theorem for the special case where $\bT$ is a matrix-valued function that has only simple eigenvalues.
See, for example,~\cite[thm.~2.8]{guttel2017}.
\begin{theorem}[Keldysh]
\label{theorem:keldysh}
Let $\bT : \C \rightarrow \C^{n \times n}$ be a matrix-valued function analytic in the domain $\Omega \subset \C$, and $\det(\bT(@\cdot@)) \not\equiv 0$. Suppose $\bT$ has $m$ simple eigenvalues in $\Omega$ denoted by $\lambda_1,\ldots,
\lambda_m$ with associated left and right eigenvectors forming the columns of $\bV = [\bv_1,\ldots,\bv_m] \in \C^{n \times m}$ and $\bW = [\bw_1,\ldots,\bw_m] \in \C^{n \times m}$. Then 
\begin{equation*}
    \bT(z)^{-1} = \bV (z \bI - \bJ)^{-1} \bW^{*} + \bN(z),
\end{equation*}
where $\bJ = \operatorname{diag}(\lambda_1,\ldots,\lambda_m)$ and $\bN$ is analytic in $\Omega$. The eigenvectors are normalized such that $\bw_j^*@ \bT@'(\lambda_j) \bv_j = 1$.
\end{theorem}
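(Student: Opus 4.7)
The plan is to prove the decomposition by analyzing the local behavior of $\bT(z)^{-1}$ in a neighborhood of each eigenvalue $\lambda_j$, identifying the singular (principal) part of the Laurent expansion, and then invoking Riemann's removable singularity theorem on what remains.

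First I would fix $j \in \{1,\ldots,m\}$ and work locally near $\lambda_j$. Since $\bT$ is analytic at $\lambda_j$, write the Taylor expansion $\bT(z) = \bT(\lambda_j) + (z-\lambda_j)\bT'(\lambda_j) + O((z-\lambda_j)^2)$. Because $\lambda_j$ is a simple eigenvalue of $\bT$, the matrix $\bT(\lambda_j)$ is singular but its null space and its cotransposed null space are each one-dimensional, spanned by $\bv_j$ and $\bw_j$ respectively. The determinant $\det(\bT(z))$ has a simple zero at $\lambda_j$, so $\bT(z)^{-1}$ has a simple pole there, and I can write a Laurent expansion
\begin{equation*}
\bT(z)^{-1} = \frac{\bR_{-1}}{z-\lambda_j} + \bR_0 + \bR_1(z-\lambda_j) + \cdots .
\end{equation*}

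Next I would identify $\bR_{-1}$ by matching coefficients in $\bT(z)\bT(z)^{-1} = \bI$ and $\bT(z)^{-1}\bT(z) = \bI$. The coefficient of $(z-\lambda_j)^{-1}$ on each side gives $\bT(\lambda_j)\bR_{-1} = \bzero$ and $\bR_{-1}\bT(\lambda_j) = \bzero$, which, by simplicity, forces $\bR_{-1} = c_j\,\bv_j\bw_j^*$ for some scalar $c_j$. The coefficient of $(z-\lambda_j)^0$ in $\bT(z)\bT(z)^{-1} = \bI$ reads $\bT(\lambda_j)\bR_0 + \bT'(\lambda_j)\bR_{-1} = \bI$; multiplying on the left by $\bw_j^*$ kills the first term, and substituting $\bR_{-1} = c_j\bv_j\bw_j^*$ yields $c_j\,\bigl(\bw_j^*\bT'(\lambda_j)\bv_j\bigr)\bw_j^* = \bw_j^*$. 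Under the stated normalization $\bw_j^*\bT'(\lambda_j)\bv_j = 1$, this gives $c_j = 1$, so the principal part of $\bT(z)^{-1}$ at $\lambda_j$ is exactly $\bv_j\bw_j^*/(z-\lambda_j)$.

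Finally, observe that $\bV(z\bI-\bJ)^{-1}\bW^* = \sum_{j=1}^{m} \bv_j\bw_j^*/(z-\lambda_j)$ because $\bJ$ is diagonal. Define
\begin{equation*}
\bN(z) := \bT(z)^{-1} - \bV(z\bI-\bJ)^{-1}\bW^*.
\end{equation*}
By the local computation above, each $\lambda_j$ is a removable singularity of $\bN$; away from the $\lambda_j$'s, $\bN$ is analytic because $\bT(z)^{-1}$ is. Riemann's removable singularity theorem (applied entrywise) then gives analyticity of $\bN$ throughout $\Omega$, which finishes the proof.

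The main obstacle, and the step I would write out most carefully, is pinning down the rank-one structure of the residue $\bR_{-1}$ together with the correct scaling. The rank-one shape relies essentially on the simplicity of $\lambda_j$ (so that both null spaces are one-dimensional), and fixing $c_j = 1$ relies on the normalization condition — which also explains why that normalization appears in the statement. Everything else is bookkeeping with Laurent series and a standard application of the removable singularity theorem.
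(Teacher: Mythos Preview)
The paper does not supply its own proof of this theorem: it is quoted as a known result from the literature (the G\"uttel--Tisseur survey, with the general-multiplicity version in Appendix~\ref{sec:generalKeldysh} likewise stated without proof and attributed to Mennicken--M\"oller). So there is no in-paper argument to compare against.

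That said, your proof is correct and is essentially the standard residue-calculus argument one finds in the NLEVP literature. The logic is sound at each step: simplicity of $\lambda_j$ gives a simple zero of $\det\bT$, hence a simple pole of $\bT(z)^{-1}$; the two relations $\bT(\lambda_j)\bR_{-1}=\bzero$ and $\bR_{-1}\bT(\lambda_j)=\bzero$ force the rank-one form $\bR_{-1}=c_j\bv_j\bw_j^*$ because both null spaces are one-dimensional; and the $(z-\lambda_j)^0$ coefficient identity, hit on the left by $\bw_j^*$, both shows $\bw_j^*\bT'(\lambda_j)\bv_j\neq 0$ (so the normalization is achievable) and pins down $c_j=1$ once it is imposed. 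The only thing I would make explicit is that the eigenvalues are isolated (zeros of the nontrivial analytic function $\det\bT$), so that a punctured-disk Laurent expansion is available; you use this implicitly.
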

One of the key insights  from Keldysh's theorem is that $\bT(@\cdot@)^{-1}$ can be decomposed into the sum of two parts: 
a rational function
\begin{equation}
    \label{eq:HKeldysh}
    \bH(z) = \bV (z \bI - \bJ)^{-1} \bW^{*}
\end{equation}
that encodes information about the eigenvalues and eigenvectors in the domain of interest, 
and a function $\bN(z)$ that is analytic in $\Omega$ and contains no information about the desired eigenvalues.
 More precisely, $\bH$ has $m$ poles that correspond to the eigenvalues $\lambda_1,\ldots,\lambda_m$ and matrix-valued residues that reveal $\operatorname{Res}(\bH,\lambda_j) = \bv_j^{} \bw_j^*$ if $\lambda_j$ is a simple eigenvalue. In Section~\ref{sec:NLEVP} we review the multipoint Loewner framework introduced in \cite{brennan2023}, which reconstructs the individual matrices $\bJ$, $\bV$, and $\bW$ (i.e., the eigenvalues and eigenvectors of $\bT$ in $\Omega$) that constitute $\bH$, as in \eqref{eq:HKeldysh}.

Before moving to this computational framework, we note that Theorem~\ref{theorem:keldysh} can be generalized to handle defective eigenvalues (see Theorem~\ref{theorem:generalKeldysh} in Appendix~\ref{sec:generalKeldysh}, and additional details in~\cite{guttel2017}).  The diagonal matrix $\bJ$ is replaced by a Jordan matrix, and the columns of $\bV$ and $\bW$ are formed by generalized eigenvectors; the  eigenvector normalization condition becomes more intricate. This generalization is relevant for pNLEVPs, since defective eigenvalues appear at some parameter values in a wide range of examples. Our main results and algorithms are applicable even if complicated Jordan structures are present (as partially demonstrated via a $2 \times 2$ Jordan block in Example~\ref{example:simpleKeldysh} and the numerical treatment in Section~\ref{sec:numericalExamples}). The simplified presentation in Theorem~\ref{theorem:keldysh} is intended to convey the main idea clearly. 

\subsection{Solving NLEVPs via contour integration and rational interpolation}
\label{sec:NLEVP}
Here we revisit how the multipoint Loewner framework can be used 
to approximate eigenvalues and eigenvectors of $\bT$~\cite{brennan2023}.\ \  Given access only to $\bT(z)$, we can use the decomposition \cref{eq:Keldyshdecomposition} to evaluate $\bH(z)$.
The framework use two key ideas:
\begin{itemize}
    \item We can evaluate $\bH$ at $\theta \notin \overline{\Omega}$ by evaluating the integral
    \begin{equation*}
      \bH(\theta) \,=\,  \frac{1}{2\pi \imath}\int_{\partial\Omega} \frac{1}{\theta - z} \bT(z)^{-1} \,\dop z;
    \end{equation*}
    \item Using tools from systems identification, we can construct $\bJ,\bV$ and $\bW$ by rationally interpolating a (sufficiently rich) set of samples $\left\{ \bH(\theta_1),\ldots,\bH(\theta_r) \right\}$.
\end{itemize}
Let us focus on the first point by considering, for some $\theta \not\in\overline{\Omega}$,
\begin{equation*}
    \frac{1}{2\pi \imath}\int_{\partial\Omega} \frac{1}{\theta - z} \bT(z)^{-1}\,\dop z = \frac{1}{2\pi \imath}\int_{\partial\Omega} \frac{1}{\theta - z} \bH(z)\, \dop z + \frac{1}{2\pi \imath}\int_{\partial\Omega} \frac{1}{\theta - z} \bN(z)\,\dop z.
\end{equation*}
Note that both $\bN(z)$ and $1/(\theta - z)$ are analytic in $\Omega$, so Cauchy's  theorem yields
\begin{equation*}
    \frac{1}{2\pi \imath}\int_{\partial\Omega} \frac{1}{\theta - z} \bN(z)\, \dop z = 0.
\end{equation*}
Additionally, the Cauchy integral formula implies that
\begin{equation*}
    \frac{1}{2\pi \imath}\!\int_{\partial\Omega} \frac{1}{\theta\!-\!z} \bH(z)\,\dop z = \bV\!\! \left(\! \frac{1}{2\pi\imath}\! \int_{\partial\Omega} \frac{1}{\theta\!-\!z} (z\bI - \bJ)^{-1} \dop z \!\right)\!\! \bW^* = \bV(\theta \bI - \bJ)^{-1} \bW^* = \bH(\theta). 
\end{equation*}
Combining these observations gives
\begin{equation}
    \label{eq:Hcontourintegral}
    \bH(\theta) = \frac{1}{2\pi \imath}\int_{\partial\Omega} \frac{1}{\theta - z} \bT(z)^{-1}\, \dop z.
\end{equation}
One can use numerical quadrature to approximate the integral, giving approximate samples of $\bH$.\ \ For a quadrature rule with nodes $z_1,\ldots,z_N$ and weights $w_1,\ldots,w_N$, 
\begin{equation}
    \label{eq:Hquadrature}
    \bH(\theta) \;=\; \frac{1}{2\pi \imath}\int_{\partial\Omega} \frac{1}{\theta - z} \bT(z)^{-1}\,\dop z \;\approx\; \sum_{k=1}^N \frac{w_k}{\theta - z_k} \bT(z_k)^{-1}.
\end{equation}
The second key idea is to apply an important tool from systems theory, the Loewner framework of Mayo and Antoulas \cite{antoulas2017,mayo2007}.
Equation~(\ref{eq:Hquadrature}) suggests that we need to compute explicit inverses $\bT(z_k)^{-1}$.  Rather, the Loewner framework only requires \emph{tangential samples} of $\bH(z)$.
Suppose we have a set of left and right \emph{probing directions}
\begin{equation*}
    \bell_1,\ldots,\bell_r \in \C^n \qquad \text{and} \qquad \br_1,\ldots,\br_r \in \C^n,
\end{equation*}
as well as left and right \emph{sample points}
\begin{equation*}
    \theta_1,\ldots,\theta_r \in \C \setminus \overline{\Omega} \qquad \text{and} \qquad \sigma_1,\ldots,\sigma_r \in \C \setminus \overline{\Omega}.
\end{equation*}
Under mild conditions, the Loewner framework exactly recovers $\bH(z)$ from the data 
\begin{equation*}
    \bell_1^\top \bH(\theta_1),\ldots,\bell_r^\top \bH(\theta_r) \qquad \text{and} \qquad \bH(\sigma_1) \br_1,\ldots,\bH(\sigma_r) \br_r.
\end{equation*}
When using the quadrature rule~(\ref{eq:Hquadrature}) to compute these samples, one only needs to solve linear systems of the form $\bT(z_k)^\top \bb = \bell_j$ and $\bT(z_k)\bc = \br_j$ (to compute $\bb^\top = \bell_j^\top \bT(z_k)^{-1}$ and $\bc = \bT(z_k)^{-1} \br_j$).
For the computational procedure to succeed, some basic requirements regarding the number and selection of probing directions and sampling points must be satisfied \cite{brennan2023}. Typically we use probing directions that are sampled randomly from a standard Gaussian distribution, akin to \emph{sketching} in randomized numerical linear algebra. The full procedure is summarized in Algorithm~\ref{alg:multipointloewner}.

\begin{algorithm}
    \caption{Multipoint Loewner Algorithm for NLEVPs~\cite{brennan2023}; cf.~\cite{mayo2007}}
    \begin{algorithmic}[1]
        \Require{$\bT:\C \rightarrow \C^{n \times n}$, $\Omega \subset \C$, $(\theta_j,\bell_j), (\sigma_j,\br_j) \in \C \times \C^n$ for $j=1,\ldots,r$}
        \Ensure{$\bJ,\bV,\bW$}
        \State Approximate via quadrature, for $j=1,\ldots,r$,
        \begin{equation*}
            \bell_j^\top \bH(\theta_j) = \frac{1}{2 \pi \imath} \int_{\partial \Omega} \frac{1}{\theta_j - z} \bell_{j}^\top \bT(z)^{-1} \dop z, \ \ 
            \bH(\sigma_j) \br_j = \frac{1}{2 \pi \imath} \int_{\partial \Omega} \frac{1}{\sigma_j - z} \bT(z)^{-1}\br_j \; \dop z.
        \end{equation*}
        \State Form the Loewner matrices $\L, \L_s \in \C^{r \times r}$ via the entries
        \begin{equation} \label{eq:loewner_entries}
            (\L)_{ij} = \frac{\bell_i^\top [\bH(\theta_i) - \bH(\sigma_j)]@\br_j}{\theta_i - \sigma_j} \quad \text{and} \quad (\L_s)_{ij} = \frac{\bell_i^\top [\theta_i \bH(\theta_i) - \sigma_j \bH(\sigma_j)] @\br_j}{\theta_i - \sigma_j}.
        \end{equation}
        \State Compute the reduced SVDs with $\bSigma, \bSigma_s \in \R^{m \times m}$:
        \begin{equation}
        \label{eq:loewner_svds}
             \bX \bSigma \bY^* = \begin{bmatrix} \L & \L_s \end{bmatrix} \qquad \text{and} \qquad \bX_s \bSigma_s \bY_s^* = \begin{bmatrix} \L \\ \L_s \end{bmatrix}.
        \end{equation}
        \State Solve the generalized eigenvalue problem for  $\{\lambda_j\}_{j=1}^m$ and  $\{\bs_j\}_{j=1}^m$:
        \begin{equation*}
            \left(\bX^* \L_s \bY_s \right) \bs_j = \lambda_j \left(\bX^* \L \bY_s \right) \bs_j \quad \rightarrow \quad \bJ = \operatorname{diag}(\lambda_1,\ldots,\lambda_m).
        \end{equation*}
        \State Form the block matrices
        \begin{equation} \label{eq:loewner_BC}
            \bB^{\kern-1pt \top} = \begin{bmatrix}
                \bH(\theta_1)^{\kern-1pt \top} \bell_1 &
                \!\cdots\! &
                \bH(\theta_r)^{\kern-1pt \top} \bell_r
            \end{bmatrix}
            \ \text{and}\ \ \bC = \begin{bmatrix}
                \bH(\sigma_1) \br_1 & \!\cdots\! & \bH(\sigma_r) \br_r
            \end{bmatrix}.
        \end{equation}
        \State Use $\bS = [\bs_1,\ldots,\bs_m]\in\C^{m\times m}$ to compute eigenvectors of $\bT(z)$: 
        \begin{equation*}
            \bV = \bC@\bY_s \bS \quad \text{and} \quad \bW^* = -\bS^{-1} \left(\bX^* \L \bY_s \right)^{-1} \bX^* \bB.
        \end{equation*}
    \end{algorithmic}
    \label{alg:multipointloewner}
\end{algorithm}

While it might not be obvious in Algorithm~\ref{alg:multipointloewner}, rational interpolation plays a key role in the multipoint Loewner framework. Step~2 of the algorithm constructs the Loewner matrix $\L$ and shifted Loewner matrix $\L_s$ defined in~(\ref{eq:loewner_entries}), and step~5 builds the data matrices $\bB$ and $\bC$ in~(\ref{eq:loewner_BC}).  
To illustrate the rational interpolation aspect more clearly, let us assume that the Loewner matrices $\L$ and $\L_s$ in~(\ref{eq:loewner_entries}) are full rank. In this case, we can skip the SVD computation in~(\ref{eq:loewner_svds}) and the rational function 
\begin{equation*}
    \bG(z) = \bC (\L_s - z\L)^{-1} \bB
\end{equation*}
is a \emph{tangential interpolant} to $\bH(z)$, meaning that it satisfies
\begin{equation*}
    \bell_i^\top \bG(\theta_i) = \bell_i^\top \bH(\theta_i) \quad \text{and} \quad
    \bG(\sigma_j) \br_j = \bH(\sigma_j) \br_j,
\end{equation*}
under basic assumptions \cite{brennan2023}. A necessary condition for $\bG$ to exactly recover the rational function $\bH$ is that the rank of $\L$ coincides exactly with the number of eigenvalues of $\bT$ in $\Omega$. In this case (and still assuming that $\L$ and $\L_s$ are full rank, so $\operatorname{rank}(\L)=\operatorname{rank}(\L_s)=m$), we can write 
\begin{equation*}
    \bG(z) = \bC (\L_s - z\L)^{-1} \bB = \bV (z\bI - \bJ)^{-1} \bW^* = \bH(z).
\end{equation*}
Since we are interested in the exact recovery of the eigenvalues, we take the number of interpolation points $r$ large enough to ensure that $r \ge m = \operatorname{rank}(\L)=\operatorname{rank}(\L_s),$ where $m$ is the number of eigenvalues in $\Omega$ (counting multiplicity).

\begin{remark}
The Mayo--Antoulas Loewner framework is able to exactly recover a matrix-valued rational function. Hence, if the integrals are evaluated exactly rather than approximated via numerical quadrature and we operate in exact arithmetic, then the matrices $\bV,\bW$ and $\bJ$ from Theorem~\ref{theorem:keldysh} are recovered exactly. This claim even holds for the general decomposition in Theorem~\ref{theorem:generalKeldysh}, if the eigenvalue problem in step~4 of Algorithm~\ref{alg:multipointloewner} is replaced by the computation of a Jordan canonical form. Additionally, the eigenvector normalization stated in Theorem~\ref{theorem:generalKeldysh} is automatically satisfied as well. However, numerical determination of the Jordan form is notoriously challenging (see~\cite{Pet24} for a contemporary approach), and quadrature and rounding errors make an exact recovery of the Jordan structure of $\bH$ unlikely in practice.  
\end{remark}

\begin{remark}
Since the Mayo--Antoulas Loewner framework was introduced~\cite{mayo2007}, several equivalent formulations have appeared in the literature. Our presentation in Algorithm~\ref{alg:multipointloewner} follows the more recent formulations of \cite{ABG20,antoulas2017}, rather than the original work \cite{mayo2007} and the version used for NLEVPs in \cite{brennan2023}. While these methods are equivalent in theory, their numerical outcomes will differ in practice due to variations in the underlying implementations.

    The rational interpolation approach proposed in 1986 by Antoulas and Anderson~\cite{antoulas_scalar_1986} is also commonly called ``the Loewner framework.'' This method, which is designed to compute scalar-valued rational interpolants, differs fundamentally from the Mayo--Antoulas Loewner framework introduced in 2007, which computes matrix-valued rational functions based on tangential samples. The Antoulas--Anderson framework naturally extends to parametric systems via the parametric Loewner framework~\cite{ionita2014}, 
which plays an important role in the algorithmic developments of Sections~\ref{sec:parametricNLEVP} and \ref{sec:parametricMultipointLoewner}.
A parametric extension of the Mayo--Antoulas framework is not yet known.
\end{remark}

\section{Extending Keldysh's theorem to the parametric setting}
\label{sec:parametricKeldysh}
To adapt the ideas introduced in the previous section to solve pNLEVPs, we first seek to extend Theorem~\ref{theorem:keldysh} to the parametric setting. Toward this goal we begin by reviewing some prior work related to pNLEVPs. The theory for the linear case
\begin{equation*}
    \bT(z,p) = z\bI - \bA(p),
\end{equation*}
where $\bA$ is a matrix-valued analytic function, is well understood; classical eigenvalue perturbation theory describes the effects of asymptotically small changes in $p$~\cite{baumgartel1984,kato1995}.
Pole-residue forms for $\bT(z,p)^{-1}$ and the relation between the parameter $p$ and the eigenvalues and eigenvectors of $\bA$ have been thoroughly investigated. We will use these results to highlight certain special cases (e.g., situations where eigenvalues and eigenvectors have branch points or singularities), but will primarily focus our discussion on the case where $\bT(z,p)$ may have an arbitrary analytic dependence on~$z$. This more general setting has received far less attention, although some results, limited to the cases with only simple and semi\-simple eigenvalues, have been derived, e.g., in \cite{andrew1993}. Our forthcoming discussion thus reveals insights about the structure of pNLEVPs that, to our knowledge, have not been considered elsewhere.

\subsection{A pointwise decomposition}
We begin by introducing a preliminary parametric version of Theorem~\ref{theorem:keldysh}. The statements in this proposition do not give us the deeper insights we seek, regarding the nature of dependence on~$p$. 
However, we include this preliminary result to contrast it with the main theorem to come.
\begin{proposition}
    \label{proposition:pointwiseKeldysh}
    Suppose $\bT : \C\times\C\rightarrow \C^{n \times n}$ is a matrix-valued function such that, for each $p\in\cP\subset \C$, we have: (1)~$\bT(@\cdot@,p)$ is analytic in the domain $\Omega \subset \C$, and $\det(\bT(\cdot,p)) \not\equiv 0$; (2)~$\bT(@\cdot@,p)$ has $m$ simple eigenvalues in $\Omega$ denoted as $\lambda_1(p),\ldots,\lambda_m(p)$ with associated left and right eigenvectors forming the columns of $\bV(p) = [\bv_1(p),\ldots,\bv_m(p)] \in \C^{n \times m}$ and $\bW(p) = [\bw_1(p),\ldots,\bw_m(p)] \in \C^{n \times m}$. Then 
    \begin{equation}
    \label{eq:pointwiseKeldysh}
    \bT(z,p)^{-1} = \bV(p) (z \bI - \bJ(p))^{-1} \bW(p)^{*} + \bN(z,p),
    \end{equation}
    where $\bJ(p) = \operatorname{diag}(\lambda_1(p),\ldots,\lambda_m(p))$ and $\bN(\cdot,p)$ is analytic in $\Omega$. The eigenvectors are normalized such that $\bw_j(p)^* \bT_j(p) \bv_j(p) = 1$, where $\bT_j(p) = \frac{\partial}{\partial z} \bT(z,p) \Big\rvert_{z = \lambda_j(p)}$.
\end{proposition}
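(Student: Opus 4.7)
The plan is to apply Theorem~\ref{theorem:keldysh} pointwise to each fixed parameter value $p \in \cP$, and then simply repackage the resulting quantities as functions of $p$. Concretely, I would fix an arbitrary $p \in \cP$ and verify that the hypotheses of Theorem~\ref{theorem:keldysh} hold for the matrix-valued function $z \mapsto \bT(z,p)$. Hypothesis~(1) of the proposition is precisely the statement that $\bT(\cdot,p)$ is analytic in $\Omega$ with $\det(\bT(\cdot,p)) \not\equiv 0$, and hypothesis~(2) supplies the $m$ simple eigenvalues $\lambda_1(p),\ldots,\lambda_m(p) \in \Omega$ together with their left and right eigenvectors $\bw_j(p)$ and $\bv_j(p)$. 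These are exactly the ingredients demanded by Theorem~\ref{theorem:keldysh}.

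Applying that theorem to $\bT(\cdot,p)$ then yields the existence of a function $\bN(\cdot,p)$ analytic in $\Omega$ such that
\begin{equation*}
    \bT(z,p)^{-1} = \bV(p) (z \bI - \bJ(p))^{-1} \bW(p)^{*} + \bN(z,p),
\end{equation*}
with $\bJ(p) = \operatorname{diag}(\lambda_1(p),\ldots,\lambda_m(p))$ and with the normalization $\bw_j(p)^* \bT_j(p) \bv_j(p) = 1$; here the derivative $\bT'(\lambda_j)$ appearing in Theorem~\ref{theorem:keldysh}, when applied to $\bT(\cdot,p)$, is exactly the $z$-derivative of $\bT(z,p)$ evaluated at $z = \lambda_j(p)$, which is the object denoted $\bT_j(p)$ in the statement of the proposition. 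Since $p \in \cP$ was arbitrary, the identity holds for every $p$, which is the claim.

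There is essentially no obstacle in this argument, as it merely invokes the non-parametric Keldysh theorem at each parameter value in isolation. The proposition deliberately makes no claim about how $\bN$, $\bV$, $\bW$, or $\bJ$ depend on $p$, or even whether the eigenvalues $\lambda_j(p)$ can be selected as analytic functions of the parameter. This is precisely why the authors label it a \emph{pointwise} decomposition and include it only as a foil: the real difficulty, which requires techniques from several complex variables, lies in establishing joint regularity in $(z,p)$ of the pieces of the decomposition, which is the substance of the forthcoming parametric Keldysh theorem.
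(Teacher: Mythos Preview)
Your proposal is correct and matches the paper's own proof essentially verbatim: the paper simply states that the assumptions allow one to apply Theorem~\ref{theorem:keldysh} to $\bT(\cdot,p)$ for each $p \in \cP$, which directly yields the decomposition with the stated properties. Your additional commentary about why this pointwise result is included only as a foil for the genuinely parametric theorem is also accurate.
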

\begin{proof}
    The assumptions allow us to apply Theorem~\ref{theorem:keldysh} to $\bT(\cdot,p)$ for each $p \in \cP$, which directly yields the decomposition in \eqref{eq:pointwiseKeldysh} with the stated properties.
\end{proof}
We emphasize that this proposition is a \emph{pointwise} result, in the sense that the conventional Keldysh decomposition is simply applied at fixed parameter values $p \in \cP$.\ \ By appealing to the full version of the Keldysh decomposition in Theorem~\ref{theorem:generalKeldysh}, we could extend this result to handle cases where $\bT(\cdot,p)$ has multiple eigenvalues for some $p\in\cP$ (provided the sum of the algebraic multiplicity of the eigenvalues in $\Omega$ is constant for all $p\in\cP$).  Crucially, the proposition does not make any claims about key properties (such as continuity and smoothness) of the functions $\bJ$, $\bV$, and $\bW$.\ \ In general, $\lambda_j$, $\bv_j$ and $\bw_j$ may have singularities or branch points in $\cP$ \cite{andrew1993,kato1995} for parameters $p \in \cP$ where the algebraic multiplicity of $\lambda_j(p)$ exceeds one. This scenario is discussed in \cite{andrew1993} and illustrated in Example~\ref{example:simple}. Branch points present a challenge for many pNLEVP solvers (see, e.g., \cite{beyn2011,bindel2008,pradovera2024}), and special care must be taken to detect them accurately. We propose an algorithm that naturally handles these critical cases without requiring any additional computational steps or prior knowledge. The key ingredient will be a truly \emph{parameteric} extension of the Keldysh decomposition.

\begin{example}
    \label{example:simple}
    Consider a linear parametric eigenvalue problem with
    \begin{equation*}
        \bT(z,p) = z \left[\begin{array}{ccc}
            1 & 0 & 0 \\ 0 & 1 & 0 \\ 0 & 0 & 1
        \end{array}\right] - \left[\begin{array}{ccc}
            0 &\ 1 & \ 0 \\ 1-p &\ 0 & \ 0 \\ 0 &\ 1 & \ p
        \end{array}\right],
    \end{equation*}
    a parameter set $\cP = [0.75, 1.25]\subset\R$ and eigenvalue domain $\Omega = \left\{ z \in \C : |z| < 0.6 \right\}$. For this problem we have
    \begin{equation*}
        \det(\bT(z,p)) = (p-z)(1-p-z^2).
    \end{equation*}
    The eigenvalues are given by the zeros of the above polynomial:
    \begin{equation*}
        \lambda_1(p) = p, \quad \lambda_2(p) = +\sqrt{1-p}, \quad \text{and} \quad \lambda_3(p) = -\sqrt{1-p}.
    \end{equation*}
    For $p \in \cP$ only the eigenvalues $\lambda_2(p)$ and $\lambda_3(p)$ are inside $\Omega$, as shown in the left plot of Figure~\ref{figure:simpleKeldysh}. These eigenvalues have a branch point at $p=1$, for which $\bT$ has a defective eigenvalue with algebraic multiplicity $2$. For $p \in \cP \setminus \{ 1 \}$, the function $\bH$ of the Keldysh decomposition presented in Proposition~\ref{proposition:pointwiseKeldysh} can be written as
   \begin{equation*}
    \bH(z,p) = \bV(p)
    \Bigg( 
    	z \bI - 
    	\underbrace{
    	\left[\begin{array}{cc}
    		\sqrt{1 - p} & 0 \\[10pt]
    		0 & -\sqrt{1 - p}
    	\end{array}\right]
    	}_{\mbox{\small ${}=\bJ(p)$}}
    \Bigg)^{-1} \bW(p)^*,
    \end{equation*}
where 
   \begin{equation*} \footnotesize
    \bV(p) = \left[\!\begin{array}{cc}
    	\displaystyle{1 - \frac{p}{\sqrt{1 - p}}} & \displaystyle{\frac{p}{\sqrt{1 - p}} + 1} \\[16pt]
    	\sqrt{1 - p} - p & -p - \sqrt{1 - p} \\[12pt]
    	1 & 1
    \end{array}\!\right], \quad 
    \bW(p) = 
    \left[\!\begin{array}{cc}
    	\displaystyle{-\frac{(p + \sqrt{1 - p}) \sqrt{1 - p}}{2(p^2 + p - 1)}}& \displaystyle{-\frac{p + \sqrt{1 - p}}{2(p^2 + p - 1)}} \\[14pt]
    	\displaystyle{\phantom{-}\frac{(p - \sqrt{1 - p}) \sqrt{1 - p}}{2(p^2 + p - 1)}} & \displaystyle{-\frac{p - \sqrt{1 - p}}{2(p^2 + p - 1)}} \\[14pt] 0 & 0
    \end{array}\!\right],
    \end{equation*}
    while for $p = 1$ we have
    \begin{equation*}
        \bH(z,1) = \underbrace{\left[\begin{array}{cc}
                    1 & \phantom{-}0 \\
                    0 & \phantom{-}1 \\
                    0 & -1
                \end{array}\right]}_{\mbox{\small ${}=\bV(1)$}} \Bigg( z \bI - \underbrace{\left[\begin{array}{cc}
                    0 & 1 \\
                    0 & 0
                \end{array}\right]}_{\mbox{\small ${}=\bJ(1)$}}\Bigg)^{-1} {\underbrace{\left[\begin{array}{cc}
                1 & 0 \\
                0 & 1 \\
                0 & 0
            \end{array}\right]}_{\mbox{\small ${}=\bW(1)$}}}^*.
    \end{equation*}
    The functions $\bJ$, $\bV$ and $\bW$ all have a branch point at $p=1$. Note that $\bJ$ is \emph{discontinuous} in its top-right entry due to the appearance of a Jordan block in $\bJ(1)$; the functions $\bV$ and $\bW$ also have discontinuities at $p=1$.
\end{example}

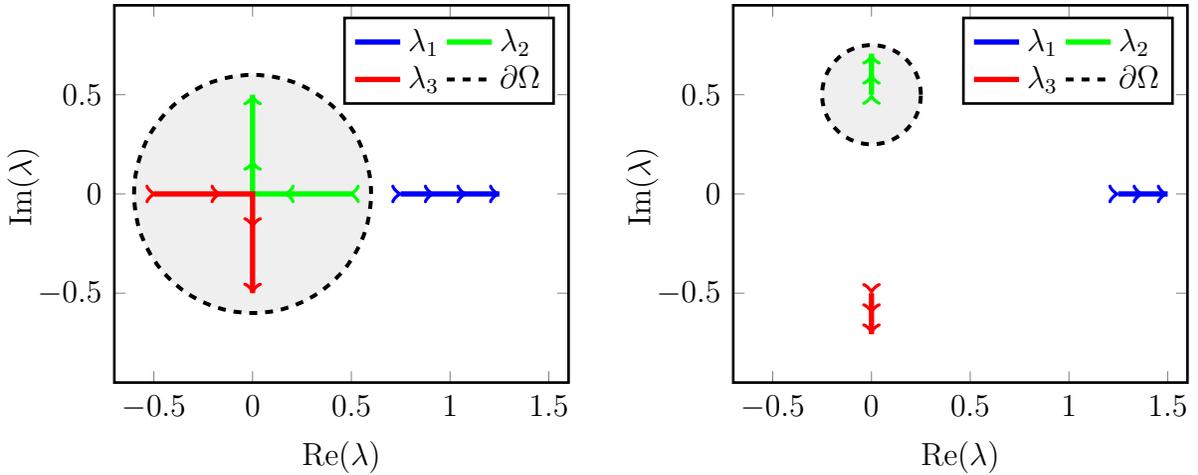
\begin{figure}[t!]
    \centering
    \begin{subfigure}{0.49\textwidth}
    \begin{tikzpicture}
      \begin{axis}[
        axis line style={line width=1pt},
        legend style={line width=1pt},
        width=3in,
        xmin=-0.7, xmax=1.6,
        ymin=-0.7, ymax=0.7,
        xlabel={$\Re(\lambda)$},
        ylabel={$\Im(\lambda)$},
        axis equal,
        line width=1.5pt,
        every axis plot/.style={line width=2pt},
        every axis y label/.style={at={(-0.2,.52)},rotate=90,anchor=center},
        legend columns=2,
        legend pos=north east,
        legend cell align=left,
        legend entries={
          $\lambda_1$,
          $\lambda_2$,
          $\lambda_3$,
          $\partial \Omega$
        }
      ]
        \addlegendimage{blue, line width=2pt}
        \addlegendimage{green, line width=2pt}
        \addlegendimage{red, line width=2pt}
        \addlegendimage{black, line width=1.5pt, dashed}
    
        \tikzset{
          arrow on path/.style={
            postaction={
              decorate,
              decoration={
                markings,
                mark=between positions 0.01 and 1 step 0.33 with {\arrow[line width=1.5pt]{>}}
              }
            }
          }
        }

        \filldraw[fill=gray!25, fill opacity=0.5, draw=none, dashed] (axis cs:0,0) circle [radius=0.6];
        \addplot[blue, arrow on path] file {figure_data/simple_l1.dat};
        \addplot[green, arrow on path] file {figure_data/simple_l2.dat};
        \addplot[red, arrow on path] file {figure_data/simple_l3.dat};
        \draw[line width=1.5pt, black, dashed] (axis cs:0,0) circle [radius=0.6];
    
      \end{axis}
    \end{tikzpicture}
    \end{subfigure}
    \begin{subfigure}{0.49\textwidth}
    \begin{tikzpicture}
      \begin{axis}[
        axis line style={line width=1pt},
        legend style={line width=1pt},
        width=3in,
        xmin=-0.7, xmax=1.6,
        ymin=-0.7, ymax=0.7,
        xlabel={$\Re(\lambda)$},
        ylabel={$\Im(\lambda)$},
        axis equal,
        line width=1.5pt,
        every axis plot/.style={line width=2pt},
        every axis y label/.style={at={(-0.2,.52)},rotate=90,anchor=center},
        legend columns=2,
        legend pos=north east,
        legend cell align=left,
        legend entries={
          $\lambda_1$,
          $\lambda_2$,
          $\lambda_3$,
          $\partial \Omega$
        }
      ]
        \addlegendimage{blue, line width=2pt}
        \addlegendimage{green, line width=2pt}
        \addlegendimage{red, line width=2pt}
        \addlegendimage{black, line width=1.5pt, dashed}
    
        \tikzset{
          arrow on path/.style={
            postaction={
              decorate,
              decoration={
                markings,
                mark=between positions 0.01 and 1 step 0.49 with {\arrow[line width=1.5pt]{>}}
              }
            }
          }
        }

        \filldraw[fill=gray!25, fill opacity=0.5, draw=none, dashed] (axis cs:0,0.5) circle [radius=0.25];
        \addplot[blue, arrow on path] file {figure_data/simple2_l1.dat};
        \addplot[green, arrow on path] file {figure_data/simple2_l2.dat};
        \addplot[red, arrow on path] file {figure_data/simple2_l3.dat};
        \draw[line width=1.5pt, black, dashed] (axis cs:0,0.5) circle [radius=0.25];
    
      \end{axis}
    \end{tikzpicture}
    \end{subfigure}

\vspace*{-7pt}
    \caption{Eigenvalues for Example~\ref{example:simple} and the first setup of Example~\ref{example:simpleKeldysh} with $\cP=[0.75,1.25]$ (left) and the second setup of Example~\ref{example:simpleKeldysh} with $\cP=[1.25,1.5]$ (right). The eigenvalues are shown as parametrized curves, where $p$ varies from the lower to the upper bound of $\cP$. In the first case (left) the eigenvalues $\lambda_2(p) = \sqrt{1-p}$ and $\lambda_3(p)=-\sqrt{1-p}$ coincide at the origin when $p=1$: $\lambda_2$ and $\lambda_3$ exhibit a branch point, giving nonsmooth behavior at $z=0$. However, the parametric Keldysh decomposition (Theorem~\ref{theorem:parametricKeldysh}) yields a \emph{rational function} $\bH$.\ \ The critical point at the origin is captured by the polynomial $u(z,p) = 1 - p - z^2$ in the denominator of $\bH$; this $u$ has the eigenvalues $\lambda_2$ and $\lambda_3$ as its zeros. In the second case (right plot) only one analytic branch is contained in $\Omega$, necessarily giving an $\bH$ with non-rational dependence on $p$.}
    \label{figure:simpleKeldysh}
\end{figure}

\subsection{A parametric Keldysh decomposition}
\label{sec:parametricKeldyshProperties} 
As our main theoretical result, we want to show that we can write
\begin{equation*}
    \bT(z,p)^{-1} = \bH(z,p) + \bN(z,p),
\end{equation*}
such that $\bH$ is a fraction of two analytic functions whose denominator has zeros that coincide exactly with the eigenvalues $\lambda_1(p),\ldots,\lambda_m(p)$ of $\bT(\cdot,p)$ in $\Omega$.\ \ It follows from the discussion in \cite{andrew1993} that this decomposition of $\bT(z,p)^{-1}$ holds if all eigenvalues are simple, but when multiplicities are present, it is far from clear why such an $\bH$ should exist. In fact, the presence of branch points in $\cP$ for the functions $\bJ$, $\bV$, and $\bW$ makes the following result interesting. This theorem is stated for parameters $p \in \pdom$, where $\pdom \subset \C$ is a domain, rather than for the previously introduced parameter set $\cP$. This choice is motivated by the need for $\bT$ to be analytic on a complex domain, which may not hold if $\cP$ is, for example, a real interval. Since one can take $\pdom$ to be a domain that barely includes $\cP$, this additional assumption is not overly restrictive.
\begin{theorem}[Parametric Keldysh]
    \label{theorem:parametricKeldysh}
    Suppose $\bT : \C\times \C \rightarrow \C^{n \times n}$ is analytic in the domain $\Omega \times \pdom$. For all $p \in \pdom$ assume $\bT(@\cdot@,p)$ has exactly $m$ eigenvalues (counting algebraic multiplicity) in the domain $\Omega$ and $\det(\bT(@\cdot@,p)) \not\equiv 0$. Then we can write
    \begin{equation}
        \label{eq:parametricKeldysh}
        \bT(z,p)^{-1} = \bH(z,p) + \bN(z,p),
    \end{equation}
    where, for all $p \in \pdom$, the function $\bN(@\cdot@,p)$ is analytic in $\Omega$, $\bH(@\cdot@,p)$ is a rational function, and we can write $\bH(z,p) = \bR(z,p) / u(z,p)$, where $\bR:\C\times\C\to \C^{n\times n}$ and $u:\C\times\C\to \C$ are analytic in $\Omega \times \pdom$. Further, for all $p \in \pdom$ the zeros of the degree-$m$ polynomial $u(@\cdot@,p)$ equal the $m$ eigenvalues of $\bT(@\cdot@,p)$ in $\Omega$, and we have
    \begin{equation*}
        \bH(z,p) = \bV(p) \left( z \bI - \bJ(p) \right)^{-1} \bW(p)^*,
    \end{equation*}
    where $\bJ(p)$, $\bV(p)$ and $\bW(p)$ are formed by the eigenvalues (Jordan blocks) and (generalized) eigenvectors, as in Theorem~\ref{theorem:generalKeldysh}. Finally, if for all $p \in \pdom$ all eigenvalues of $\bT(@\cdot@,p)$ in $\Omega$ are simple, then $\bJ$, $\bV$ and $\bW$ are analytic in $\pdom$.
\end{theorem}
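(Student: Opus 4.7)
The plan is to build the denominator $u(z,p)$ and the numerator $\bR(z,p)$ of $\bH$ as jointly analytic functions directly, then invoke the pointwise Keldysh decomposition (Theorem~\ref{theorem:generalKeldysh}) at each fixed $p$ to identify $\bH := \bR/u$ with the Keldysh factor $\bV(p)(z\bI-\bJ(p))^{-1}\bW(p)^*$; the remainder $\bN := \bT(z,p)^{-1} - \bH(z,p)$ is then automatically analytic in $\Omega$ for each $p$. The crux will be producing $u$ and $\bR$ without working directly with $\bJ$, $\bV$, or $\bW$, since those factors themselves can fail to be analytic in $p$ (as Example~\ref{example:simple} shows).

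For the denominator, the hypothesis places no eigenvalues on $\partial\Omega$, so $\det\bT(z,p)\neq 0$ on a neighborhood of $\partial\Omega\times\pdom$. The argument principle applied via Jacobi's formula $\partial_z\log\det\bT = \mathrm{tr}(\bT^{-1}\partial_z\bT)$ gives the power sums of the eigenvalues in $\Omega$ (with algebraic multiplicity):
\[
q_k(p) \;=\; \sum_{j=1}^m \lambda_j(p)^k \;=\; \frac{1}{2\pi\imath}\int_{\partial\Omega} z^k\,\mathrm{tr}\!\bigl(\bT(z,p)^{-1}\partial_z\bT(z,p)\bigr)\,\dop z,
\]
and differentiation under the integral sign makes each $q_k$ analytic in $p$. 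Newton's identities then give analyticity of the elementary symmetric functions $e_k(p)$, so
\[
u(z,p) \;:=\; \prod_{j=1}^m\bigl(z-\lambda_j(p)\bigr) \;=\; z^m - e_1(p)z^{m-1} + \cdots + (-1)^m e_m(p)
\]
is jointly analytic on $\C\times\pdom$ with the required zero set. For the numerator I would use the matrix moments
\[
\bM_k(p) \;:=\; \frac{1}{2\pi\imath}\int_{\partial\Omega} z^k\,\bT(z,p)^{-1}\,\dop z,
\]
which are jointly analytic in $p$ by the same argument and, pointwise in $p$ via Keldysh and Cauchy, equal $\bV(p)\bJ(p)^k\bW(p)^*$. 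Since $\bV(z\bI-\bJ)^{-1}\bW^*\to 0$ as $z\to\infty$, the product $u(\cdot,p)\bH(\cdot,p)$ is for each $p$ a polynomial in $z$ of degree at most $m-1$; multiplying the Laurent expansion $\bH(z,p) = \sum_{k\ge 0}\bM_k(p)z^{-k-1}$ by $u(z,p)$ and collecting powers of $z$ gives
\[
\bR(z,p) \;=\; \sum_{N=0}^{m-1}\Biggl(\sum_{k+\ell=m-1-N}(-1)^\ell e_\ell(p)\bM_k(p)\Biggr) z^N,
\]
whose coefficients are polynomials in the analytic functions $e_\ell(p)$ and $\bM_k(p)$; hence $\bR$ is jointly analytic on $\C\times\pdom$. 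The main obstacle here is that the local pole structure of $\bH(\cdot,p)$ can change discontinuously with $p$ when simple eigenvalues collide into a Jordan block, so a direct pole-by-pole construction cannot succeed; the moment-based formula circumvents this difficulty by representing every coefficient of $\bR$ as a contour integral over the fixed boundary $\partial\Omega$.

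For the final assertion, suppose all $m$ eigenvalues of $\bT(\cdot,p)$ in $\Omega$ are simple for every $p\in\pdom$. Then $u(\cdot,p)$ has $m$ distinct simple zeros, $\partial_z u(\lambda_j(p),p)\neq 0$, and the implicit function theorem produces local analytic branches $\lambda_j(p)$, so $\bJ(p)=\operatorname{diag}(\lambda_j(p))$ is locally analytic on $\pdom$. Partial-fraction expansion
\[
\bH(z,p) \;=\; \sum_{j=1}^m \frac{1}{z-\lambda_j(p)}\cdot\frac{\bR(\lambda_j(p),p)}{\partial_z u(\lambda_j(p),p)}
\]
then exhibits each residue $\bv_j(p)\bw_j(p)^*$ as a locally analytic rank-one matrix; a standard rank-one factorization pinned down by the Keldysh normalization $\bw_j(p)^*\partial_z\bT(\lambda_j(p),p)\bv_j(p)=1$ yields locally analytic $\bv_j$ and $\bw_j$, completing the proof.
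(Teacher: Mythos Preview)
Your approach is sound and genuinely different from the paper's. Where you build $u$ from contour integrals via power sums and Newton's identities, and $\bR$ from the moment integrals $\bM_k$, the paper instead factors $\det\bT(z,p)=u(z,p)\,h(z,p)$ by an extension of the Weierstrass preparation theorem (Lemma~\ref{lemma:generalizedWeierstrass}) and then obtains $\bR$ by applying an extended Weierstrass division theorem (Lemma~\ref{lemma:f2division}) entrywise to $\operatorname{adj}(\bT)/h$; both extensions are proved by the local several-complex-variables result plus analytic continuation across $\pdom$. Your route is more elementary (no Weierstrass machinery) and more constructive: the coefficients of $u$ and $\bR$ carry explicit integral formulas that dovetail with the algorithmic content of the paper, your $\bM_k$ being precisely the Markov parameters underlying Beyn-type contour methods~\cite{asakura2009,beyn2012,brennan2023}. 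The identification $\bH=\bV(p)(z\bI-\bJ(p))^{-1}\bW(p)^*$ and the simple-eigenvalue analyticity claim are handled essentially the same way in both proofs (pointwise Keldysh plus the implicit function theorem, the paper citing~\cite{andrew1993}).

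One technical point needs attention: you integrate along $\partial\Omega$ and assert that the hypothesis forbids eigenvalues there, but the theorem only assumes analyticity on the \emph{open} set $\Omega\times\pdom$; nothing guarantees that $\bT$ is defined, analytic, or nonsingular on $\partial\Omega$. The paper's Weierstrass argument never leaves $\Omega\times\pdom$ and so is immune. The repair is routine: for each $p_0\in\pdom$ replace $\partial\Omega$ by a cycle $\Gamma\subset\Omega$ encircling the $m$ eigenvalues of $\bT(\cdot,p_0)$; continuity of the roots and the constant-count hypothesis make $\Gamma$ valid for $p$ in a neighborhood of $p_0$, the integrals are analytic there, and contour independence (Cauchy) lets the local definitions of $q_k$, $e_k$, $\bM_k$ patch to global analytic functions on $\pdom$. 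This is the same analytic-continuation bookkeeping the paper carries out for its Weierstrass polynomials, so neither proof is entirely free of a local-to-global step.
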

We sketch the key steps of the proof of Theorem~\ref{theorem:parametricKeldysh}.
(The full proof and supporting background are provided in Appendix~\ref{sec:proofAppendix}.) The proof relies on two fundamental results from the theory of several complex variables: the Weierstrass preparation theorem and the Weierstrass division theorem \cite{ebeling2007,gunning2015}. 
First, we show that we can write
\begin{equation*}
    \bT(z,p)^{-1} = \frac{\bP(z,p)}{u(z,p)},
\end{equation*}
where $\bP(z,p)$ is analytic in $\Omega \times \Pi$ and $u(z,p)$ is a polynomial of degree $m$ in $z$ and analytic in $p$, such that the singularities (i.e., the eigenvalues) of $\bT$ in $\Omega \times \pdom$ coincide exactly with the zeros of $u$ in $\Omega \times \pdom$.\ \ This result follows from the identity
\begin{equation*}
    \bT(z,p)^{-1} = \frac{\operatorname{adj}(\bT(z,p))}{\det(\bT(z,p))},
\end{equation*}
the Weierstrass preparation theorem, and an analytic continuation argument. Similarly, the Weierstrass division theorem and analytic continuation allow us to write
\begin{equation*}
    \bP(z,p) = \bN(z,p)@@u(z,p) + \bR(z,p),
\end{equation*}
where $\bN$ and $\bR$ are analytic in $\Omega \times \pdom$ and $\bR(z,p)$ is a polynomial in $z$. Combined, these results give
\begin{equation*}
    \bT(z,p)^{-1} \;=\; \frac{\bN(z,p)@@u(z,p) + \bR(z,p)}{u(z,p)} \;=\; \bN(z,p) + \underbrace{\frac{\bR(z,p)}{u(z,p)}}_{{}\;=\;\bH(z,p)} \;=\; \bH(z,p) + \bN(z,p),
\end{equation*}
with the properties of $\bN$ and $\bH$ stated in Theorem~\ref{theorem:parametricKeldysh}. The relation to $\bJ(p)$,$\bV(p)$, and $\bW(p)$ follows from showing that, for each fixed $p$, the Laurent series of $\bH(z,p)$ in $z$ coincides exactly with that of the function $\bV(p)(z \bI - \bJ(p))^{-1} \bW(p)^*$ appearing in the pointwise parametric Keldysh decomposition in Proposition~\ref{proposition:pointwiseKeldysh}.

Theorem~\ref{theorem:parametricKeldysh} reveals how $\bH$ depends on $p$ even at the values of $p$ where the algebraic multiplicity of the individual eigenvalues of $\bT(@\cdot@,p)$ in $\Omega$ changes, its primary advantage over the pointwise result in Proposition~\ref{proposition:pointwiseKeldysh}.
We note that the assumption that $\bT$ has exactly $m$ eigenvalues in $\Omega$ (summing over all algebraic multiplicities) for every $p \in \pdom$ is crucial  to ensure that the dimensions of the matrices $\bJ(p)$, $\bV(p)$ and $\bW(p)$ are consistent and the underlying functions are well-defined on $p\in\pdom$.

In the non-parametric case, Algorithm~\ref{alg:multipointloewner} can exactly recover the eigenvalues in $\Omega$ via the Loewner framework because $\bH$ is a rational function. In the parametric setting, exact recovery of a scalar-valued rational function is also possible via the parametric Loewner frame\-work \cite{ionita2014}. However, in general the $\bH$ in Theorem~\ref{theorem:parametricKeldysh} is a rational function in the first variable $z$ but \emph{merely analytic} in the parameter $p$. An important question, if we aim for exact eigenvalue recovery, thus arises: When is $\bH$ a rational function in both $z$ and $p$? If so, we can (at least theoretically) recover parametric eigenvalues exactly, as we will discuss in the next two sections. The following lemma addresses this question.
\begin{lemma}
    Suppose $\bT : \C\times \C \rightarrow \C^{n \times n}$ satisfies the assumptions of Theorem~\ref{theorem:parametricKeldysh}. For the given $\Omega$ and $\pdom$, let $\bH$ be the function from the parametric Keldysh decomposition~\eqref{eq:parametricKeldysh}. If $\bH$ is a rational function in both variables $z$ and $p$, then there exists a bivariate polynomial $u : \C\times \C\rightarrow \C$ and an analytic function $h:\C\times \C \rightarrow \C$ such that
    \begin{equation*}
        \det(\bT(z,p)) = u(z,p)@h(z,p),
    \end{equation*}
    and the $m$ eigenvalues of $\bT(\cdot,p)$ in $\Omega$ equal the zeros of $u(\cdot,p)$ for all $p \in \pdom$. 
\end{lemma}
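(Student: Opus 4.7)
The plan is to produce $u$ as the scalar denominator of a reduced rational representation of $\bH$, and to use the identity $\operatorname{adj}(\bT) = \det(\bT)\,\bT^{-1}$ to force $u \mid \det(\bT)$ in the analytic sense. Since $\bH \in \C(z,p)^{n \times n}$, unique factorization in $\C[z,p]$ lets me write $\bH(z,p) = \bR(z,p)/v(z,p)$ in matrix-reduced form: $v$ is a scalar bivariate polynomial, each entry of $\bR$ is a bivariate polynomial, and no irreducible factor of $v$ divides every entry of $\bR$. (Concretely, take $v$ to be the least common multiple of the denominators of the entries of $\bH$ written as scalar rationals in lowest terms.) Multiplying the parametric Keldysh identity $\bT(z,p)^{-1} = \bH(z,p) + \bN(z,p)$ from Theorem~\ref{theorem:parametricKeldysh} (valid on $\Omega \times \pdom$) through by $v\det(\bT)$ yields the key algebraic identity
\begin{equation*}
\det(\bT(z,p))\,\bR(z,p) \;=\; v(z,p)\,\bM(z,p), \qquad \bM \;:=\; \operatorname{adj}(\bT) - \det(\bT)\,\bN,
\end{equation*}
where $\bM$ is analytic on $\Omega \times \pdom$.

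Next I would show that each irreducible polynomial factor $q$ of $v$, of multiplicity $e$, whose zero variety $Z(q) \subset \C^2$ meets $\Omega \times \pdom$ satisfies $q^e \mid \det(\bT)$ analytically. The reduced form supplies an entry $R_{ij}$ with $q \nmid R_{ij}$; a generic point $(z_0,p_0) \in Z(q) \cap (\Omega \times \pdom)$ is then a smooth point of $Z(q)$, lies on no other component of $v=0$, and satisfies $R_{ij}(z_0,p_0) \neq 0$. At such a point the scalar equation $\det(\bT)\,R_{ij} = v M_{ij}$ locally rearranges to $\det(\bT) = q^e \cdot (\text{analytic})$, so $\det(\bT)$ vanishes transversally to $Z(q)$ with order at least $e$. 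Since $Z(q)$ is an irreducible, connected complex curve, analytic continuation propagates this transverse multiplicity to all of $Z(q)$, and the Weierstrass preparation theorem (locally, $q$ defines an irreducible analytic hypersurface) promotes transverse vanishing to the global divisibility $q^e \mid \det(\bT)$. Taking $u$ to be the product of all such $q^{e_q}$, over irreducible factors $q$ of $v$ whose zero variety meets $\Omega \times \pdom$, yields a bivariate polynomial with $h := \det(\bT)/u$ analytic on $\Omega \times \pdom$.

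The eigenvalue characterization then follows quickly: if $u(z_0,p)=0$ for some $z_0 \in \Omega$, then $\det(\bT(z_0,p)) = u(z_0,p)\,h(z_0,p)=0$, so $z_0$ is an eigenvalue; conversely, any eigenvalue $\lambda \in \Omega$ of $\bT(\cdot,p)$ is a pole of $\bH(\cdot,p)$ by Theorem~\ref{theorem:parametricKeldysh}, forcing $v(\lambda,p)=0$, and by construction the irreducible factor of $v$ responsible is included in $u$. The main obstacle is the transition from transverse vanishing of $\det(\bT)$ at generic points of $Z(q)$ to the global divisibility $q^e \mid \det(\bT)$: the straightforward transverse-vanishing argument breaks down at singular points of $Z(q)$, at intersections of distinct components of $v = 0$, and on the zero locus of $R_{ij}$. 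The Weierstrass preparation theorem, combined with the irreducibility of $q$ in $\C[z,p]$ and Hartogs-type extension across proper analytic subsets, supplies the standard machinery needed to push through these complications.
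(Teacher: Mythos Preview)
Your approach is quite different from the paper's, which offers only a one-sentence remark rather than a standalone proof: the lemma is said to follow by rerunning the construction in Appendix~\ref{sec:parametricKeldyshProof}, replacing the Weierstrass polynomial in~\eqref{eq:detdecomposition} by a bivariate polynomial. Your route---extracting a candidate $u$ from a reduced rational representation $\bH = \bR/v$ and then proving $u \mid \det(\bT)$ via local divisibility along each irreducible component of $\{v=0\}$---is more concrete and, modulo the technical obstacles you already flag, does establish a factorization $\det(\bT) = u\,h$ with $h$ analytic.

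There is, however, a gap concerning multiplicities. Your exponent $e_q$ records the maximal \emph{pole order} of the entries of $\bH$ along $Z(q)$, which need not equal the algebraic multiplicity of the corresponding eigenvalue. Take $\bT(z,p) = (z-p)\,\bI_n$ with $n \ge 2$: then $\bH(z,p) = (z-p)^{-1}\bI_n$ is rational, your reduced denominator is $v = z-p$, and your construction yields $u = z-p$ (with $h = (z-p)^{n-1}$ analytic but not zero-free), whereas $m = n$. Read in parallel with Theorem~\ref{theorem:parametricKeldysh}, where $u(\cdot,p)$ is explicitly the degree-$m$ polynomial whose zeros are the $m$ eigenvalues, the lemma asks for $u(\cdot,p)$ to have $m$ zeros counted with multiplicity; your $u$ has only one. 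If you read the conclusion as mere set equality your $u$ suffices, but the paper's own remark---treating the lemma's $u$ as a drop-in replacement for the degree-$m$ Weierstrass polynomial in~\eqref{eq:detdecomposition}---indicates the stronger reading. A repair aligned with that hint would be to argue directly that the unique monic degree-$m$ polynomial $u_0$ produced in the proof of Theorem~\ref{theorem:parametricKeldysh} has polynomial (not merely analytic) coefficients in $p$ whenever $\bH$ is rational.
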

The above lemma gives a necessary condition for $\bH$ to be rational in both $z$ and $p$. Additional assumptions on $\bT$ are needed to establish a sufficient condition for $\bH$ to be rational. The statement from the lemma can easily be verified by following the proof in Appendix~\ref{sec:parametricKeldyshProof}, and by replacing the Weierstrass polynomial in \eqref{eq:detdecomposition} (whose coefficients are arbitrary analytic functions in $p$) with the bivariate polynomial $u$ introduced in the lemma. Example~\ref{example:simpleKeldysh} below shows how the same $\bT(z,p)$ can lead to rational $\bH$ and irrational $\bH$, depending on the choice of $\Omega$ and $\cP$.
 
\begin{example}
    \label{example:simpleKeldysh}
    Here we revisit the problem introduced in Example~\ref{example:simple}, and begin by investigating its parametric Keldysh decomposition for $\cP = [0.75,1.25]\subset\R$ and $\Omega = \left\{ z \in \C: \lvert z \rvert < 0.6 \right\}$. We have $\bT(z,p)^{-1} = \bH(z,p) + \bN(z,p)$, where
    \begin{equation}
    \label{eq:linearExample1Keldysh}
        \bH(z,p) = \frac{1}{z^2 + p - 1} \left[\begin{array}{ccc}
					z & 1 & 0 \\[6pt]
                    1 - p & z & 0 \\[6pt]
					\displaystyle{\frac{(p+z)(p-1)}{p^2+p-1}} & \displaystyle{\frac{-p@z+p-1}{p^2+p-1}} & 0
			\end{array}\right],
    \end{equation}
    and
    \begin{equation*}
       \bN(z,p) = \frac{1}{p-z} \left[\begin{array}{ccc}
					0 & 0 & 0 \\[6pt]
                    0 & 0 & 0 \\[6pt]
                    \displaystyle{\frac{p-1}{p^2+p-1}} & \displaystyle{\frac{-p}{p^2+p-1}} & -1
			\end{array}\right].
    \end{equation*}
   Note that $\bH$ and $\bN$ are both rational functions of $z$ and $p$, despite the fact that the eigenvalues $\pm\sqrt{1-p}$ (irrational in $p$) are contained in $\Omega$. This situation is possible for this $\cP$ and $\Omega$ because the eigenvalues in $\Omega$ for all $p \in \cP$ are captured by the zeros of the second polynomial factor of
    \begin{equation*}
        \det(\bT(z,p)) = (z-p)(1-p-z^2).
    \end{equation*}
    Further, the singularities of $\bN$ ($p = z, (-1\pm \sqrt{5})/2$) are not in $\Omega \times \cP$, and $\bN$ is clearly analytic in a neighborhood of that set. Neither $\bH$ nor $\bN$ have branch points in $\Omega \times \cP$.

   The functions $\bH$ and $\bN$ are not always rational for this $\bT(z,p)$, though. Suppose that instead $\cP = [1.25,1.5]$ and $\Omega = \{ z\in\C: \lvert z - 0.5@@\imath \rvert < 0.25 \}$. In this case we have
    \begin{equation*}
    \mbox{$\bH(z,p) = {}$} \footnotesize
    \frac{1}{z - \sqrt{1-p}}
    \left[\begin{array}{ccc}
    	\displaystyle{\frac{1}{2}} & \displaystyle{\frac{1}{2\sqrt{1 - p}}} 
        & 0 \\[12pt]
    	\displaystyle{\frac{\sqrt{1 - p}}{2}} 
        & \displaystyle{\frac{1}{2}} & 0 \\[12pt]
    	\displaystyle{\frac{-(p + \sqrt{1 - p})\sqrt{1 - p}}{2(p^2 + p - 1)}} 
        & \displaystyle{\frac{-(p + \sqrt{1 - p})}{2(p^2 + p - 1)}} 
        & 0
    \end{array}\right],
    \end{equation*}
    while $\bN(z,p)$ equals
    \begin{equation*}
    \footnotesize
    \left[\!\begin{array}{ccc}
    	\displaystyle{\frac{1}{2(z + \sqrt{1 - p})}}
        & \displaystyle{\frac{-1}{2z\sqrt{1 - p} - 2p + 2}} 
        & 0 \\[15pt]
    	\displaystyle{\frac{-(p + \sqrt{1 - p})(p - \sqrt{1 - p})\sqrt{1 - p}}{2(z + \sqrt{1 - p})(p^2 + p - 1)}}
        & \displaystyle{\frac{1}{2(z + \sqrt{1 - p})}}
        & 0 \\[15pt]
    	\displaystyle{\frac{p - 1}{(p - z)(p^2 + p - 1)} + \frac{(p - \sqrt{1 - p})\sqrt{1 - p}}{2(z + \sqrt{1 - p})(p^2 + p - 1)}}
        & \displaystyle{\frac{-(p + z)(p + \sqrt{1 - p})}{2(z + \sqrt{1 - p})(p - z)(p^2 + p - 1)}}
        & \displaystyle{\frac{1}{z-p}}
    \end{array}\!\right].
    \end{equation*}
    These functions are not rational, since $\Omega\times \pdom$ only captures the eigenvalue that is the positive branch of $\sqrt{1-p}$, one of the two roots of $u(z,p) = 1 - p - z^2$. The analyticity properties established in Theorem~\ref{theorem:parametricKeldysh} are still satisfied for $\bH$ and $\bN$ in a neighborhood of $\Omega \times \cP$, but the functions have branch points outside this neighborhood. 
    
    Figure~\ref{figure:simpleKeldysh} visualizes the eigenvalues for both these scenarios. 
\end{example}

\section{Motivation: Solving pNLEVPs in an ideal setting}
\label{sec:parametricNLEVP}
Our next goal is to motivate the computational procedure for solving pNLEVPs that we will describe in detail in the next section. Our algorithm is inspired by the multipoint Loewner method for non-parametric NLEVPs introduced in Algorithm~\ref{alg:multipointloewner}.  We can summarize that algorithm for non-parametric problems in three main steps.
\begin{enumerate}
    \item Compute probed samples of the form $\bell^\top \bH(z)$ and $\bH(z) \br$ via contour integration of $\bT(\cdot)^{-1}$, for a modest number of interpolation points $z$ and probing vectors $\bell$ and $\br$.
    \item Use the Mayo--Antoulas Loewner framework \cite{mayo2007} to \emph{exactly} recover the rational matrix function $\bH$ based on these probed samples.
    \item Extract from the realization of $\bH$ the eigenvalues of $\bT(z)$ in $\Omega$ and the corresponding eigenvectors, as outlined in \cite[sect.~5]{brennan2023}.
\end{enumerate}
For now, we assume that contour integrals are evaluated exactly, without any quadrature errors. We discuss how this procedure could be extended to the parametric setting. Clearly, we could simply apply the multipoint Loewner algorithm to $\bT(\cdot,\hp)$ for each desired parameter instance $\hp$. However, this approach could be computationally demanding in applications that seek eigenvalues for many parameter values. The sampling in step~1 would be prohibitively expensive if $\bT(z,p)$ is a large matrix, or many quadrature nodes are required to approximate the contour integral for $\bH(z,p)$. Hence, we propose a different approach that requires a limited number of evaluations of $\bT$ in an ``offline phase'' and enables quick eigenvalue computations for new parameter values in an ``online phase''. In an ideal world, a parametric multipoint Loewner method would follow these steps.
\begin{enumerate}
    \item (Offline) Compute probed samples of the form $\bell^\top \bH(z,p)$ and $\bH(z,p) \br$ via contour integration of $\bT(\cdot,p)^{-1}$, for a modest number of interpolation points $z$, parameter values $p$, and probing vectors $\bell$ and $\br$.
    \item (Offline) Use a parametric version of the Mayo--Antoulas Loewner framework to \emph{exactly} recover the rational matrix function $\bH$ from the probed samples.
    \item (Online) For each parameter instance $\hp$, evaluate the realization $\bH(\cdot, \hp)$ and extract from it the eigenvalues  of $\bT(z,\hp)$ in $\Omega$ and corresponding eigenvectors.
\end{enumerate}
There are two fundamental issues with this ``ideal parametric multipoint Loewner method.'' First, at present there is no known parametric version of the Mayo--Antoulas Loewner framework (or analogous algorithm) that \emph{exactly recovers} a \emph{matrix-valued} multivariate rational function from probed data. Second, the function $\bH$ may not be rational to begin with (as illustrated in Example~\ref{example:simpleKeldysh}), in which case  the exact recovery of $\bH$ with rational functions is not a feasible objective. For now, let us assume that $\bH$ is a rational function and focus on solving the former of the two issues. While there is no known algorithm that exactly recovers multivariate \emph{matrix-valued} rational functions from probed data, the parametric Loewner interpolation framework introduced in \cite{ionita2014} by Ionita and Antoulas allows for exact recovery of \emph{scalar-valued} rational multivariate functions, and, with some modifications, vector-valued rational functions as well~\cite[chap.~6]{ABG20}. As we explain below, we can use these computational frameworks for exact recovery of $\bell^\top \bH(z,p)$ and $\bH(z,p)@\br$, ultimately leading to exact eigenvalue recovery. We achieve this by splitting step~2 of the ``ideal parametric multipoint Loewner method'' into two parts.
\begin{enumerate}
    \item[2.1.] (Offline) For each left probing direction $\bell$ and right probing direction $\br$, use the parametric Loewner framework to compute vector-valued rational functions $\bL$ and $\bR$ that \emph{exactly} recover $\bL(z,p) := \bell^\top \bH(z,p)$ and $\bR(z,p) := \bH(z,p)@\br$ for fixed $\bell$ and $\br$.
    \item[2.2.] (Online) For each parameter instance $\hp$, evaluate the realizations $\bL(\cdot, \hp)$ and $\bR(\cdot, \hp)$ to obtain (inexpensive) samples of the form $\bell^\top \bH(z,\hp)$ and $\bH(z,\hp)@\br$ for interpolation points $z$.  Then apply the single-variable Mayo--Antoulas Loewner framework to this data for exact recovery of $\bH(\cdot,\hp)$.
\end{enumerate}
As we will discuss in the next section, in theory the above procedure allows for \emph{exact} recovery of eigenvalues and eigenvectors, even for arbitrary Jordan structures. This is of course only true with our previously mentioned assumptions, which include that $\bH$ is a rational function in both $z$ and $p$. While the latter is assumed implicitly via our assumptions in Theorem~\ref{theorem:parametricKeldysh}, the parameter dependence may not always be rational, as illustrated in Example~\ref{example:simpleKeldysh}. In this case, we propose to use \emph{rational approximation} to accurately capture the parameter dependence. In particular, we propose to employ vector-valued multivariate rational approximants $\bL(z,p) \approx \bell^\top \bH(z,p)$ and $\bR(z,p) \approx \bH(z,p)@ \br$. In general this approach will not result in exact recovery of eigenvalues, but only in approximations to them. Many algorithms exist that compute multivariate rational approximants, but we would like to use a method that exactly recovers $\bell^\top \bH(z,p)$ and $\bH(z,p)@\br$ if these functions are rational. In other words, our multivariate rational approximation algorithm of choice should be able to recover the parametric Loewner interpolant, and hence the eigenvalues, when exact recovery \emph{is} possible. Two such algorithms are the adaptive data-driven pROM construction~\cite{Ant25} and the parametric (multivariate) adaptive Antoulas--Anderson (p-AAA) algorithm~\cite{rodriguez2023} (inspired by the univariate adaptive Antoulas--Anderson algorithm~\cite{NST18}). In the following, we work with p-AAA.\ \  We refer the reader to \cite{ABG20,Ant25,rodriguez2023,ionita2014} for details about these algorithms and background on multivariate rational interpolation and approximation. 

\section{The parametric multipoint Loewner algorithm for pNLEVPs}
\label{sec:parametricMultipointLoewner}

In this section, we propose a method for solving pNLEVPs inspired by the multipoint Loewner algorithm for non-parametric NLEVPs in Algorithm~\ref{alg:multipointloewner}.  We will walk through each step of this method, which is summarized in Algorithm~\ref{alg:pmpl}. 

The algorithm begins with an offline phase. Given the target eigenvalue domain $\Omega$ and parameter domain $\pdom$, invoke Theorem~\ref{theorem:parametricKeldysh} to get a decomposition of the form~(\ref{eq:parametricKeldysh}):  $\bT(z,p)^{-1} = \bH(z,p) + \bN(z,p)$.
To approximate the probed samples of $\bH$, apply a quadrature rule (nodes $\{z_t\}$, weights $\{w_t\}$) to the contour integral formulations:
\begin{equation}
\label{eq:parametricQuadrature}
    \begin{aligned}
        \bell_k^\top \bH(s_i,p_j) = \frac{1}{2 \pi \imath} \int_{\partial \Omega} \frac{1}{s_i - z} \bell_k^\top \bT\left(z,p_j\right)^{-1} \, \dop z &\approx  \sum_{t=1}^N \frac{w_t}{s_i - z_t} \bell_k^\top \bT(z_t,p_j)^{-1};\\
        \bH(s_i,p_j) \br_k = \frac{1}{2 \pi \imath} \int_{\partial \Omega} \frac{1}{s_i - z} \bT\left(z,p_j\right)^{-1} \br_k \, \dop z &\approx  \sum_{t=1}^N \frac{w_t}{s_i - z_t} \bT(z_t,p_j)^{-1} \br_k.
    \end{aligned}
\end{equation}
Compute these quadrature approximations for all combinations of probing directions $\bell_k \in \{ \bell_1,\ldots,\bell_r \}$ and $\br_k \in \{ \br_1,\ldots,\br_r \}$, sampling values $s_i \in \{ s_1,\ldots, s_{2r} \}$, and parameter sampling points $p_j \in \{ p_1,\ldots,p_q \}$, leading to sets of approximated probed samples of the form $\bell_k^\top \bH(s_i, p_j)$ and $\bH(s_i, p_j)@\br_k$. (We assess the computational cost of this step and the following at the end of this section.) In the ideal setting outlined in the last section, we would use these probed samples to compute rational functions $\bL_k(z,p)$ and $\bR_k(z,p)$ that exactly recover $\bell_k^\top \bH(z,p)$ and $\bH(z,p)@ \br_k$ via the vector-valued version of the parametric Loewner framework~\cite[chap.~6]{ABG20}. In a practical setting where exact recovery may not be not possible, we apply the p-AAA algorithm \cite{rodriguez2023} to compute rational approximations $\bL_k(z,p) \approx \bell_k^\top \bH(z,p)$ and $\bR_k(z,p)\approx \bH(z,p)@ \br_k$ for $k=1,\ldots, r$.  In Appendix~\ref{sec:rationalApproximation} (Algorithm~\ref{alg:rationalApproximation}) we outline an efficient way to construct such approximations.  (The specific choice of the p-AAA algorithm is not critical, but is provided for completeness.)  We recall the essential assumption from Theorem~\ref{theorem:parametricKeldysh} that for each parameter $p \in \Pi$, $\bT(\cdot,p)$ has precisely $m$ eigenvalues (counting algebraic multiplicity) in $\Omega$.\ \  This assumption can be explicitly verified in Algorithm~\ref{alg:rationalApproximation} by checking the ranks of the Loewner matrices in \eqref{eq:eigenvalueAssumptionCheck}.

\begin{algorithm}[t]
    \caption{Parametric Multipoint Loewner Algorithm for pNLEVPs}
    \label{alg:pmpl}
    \begin{algorithmic}[1]
        \Require{$\bT:\C\times \C \rightarrow \C^{n \times n}$, domain $\Omega \subset \C$, probing directions $\{ \bell_k \}_{k=1}^r, \{ \br_k \}_{k=1}^r$, offline sampling values $\{s_i\}_{i=1}^{2r} = \{ \theta_i \}_{i=1}^r \cup \{ \sigma_i \}_{i=1}^r \subset \C\setminus \overline{\Omega}$, $\{p_j\}_{j=1}^q \subset \cP$, online evaluation parameter $\hp \in \cP$}
        \Ensure{Eigenvalue matrix $\bJ(\hp)\in\C^{m\times m}$ and eigenvector matrices $\bV(\hp),\bW(\hp)\in\C^{n\times m}$}
        \medskip
        \StartOfflinePhase
        \State Use quadrature to approximate the contour integrals for all combinations of sampling values and probing directions ($i=1,\ldots,2r$, $j=1,\ldots,q$, and $k=1,\ldots,r$):
        \begin{equation*}
        \begin{aligned}
            \bell_k^\top \bH\left(s_i, p_j\right) &= \frac{1}{2 \pi \imath} \int_{\partial \Omega} \frac{1}{s_i - z} \bell_k^\top \bT\left(z, p_j\right)^{-1} \; \dop z  \\
            \bH\left(s_i, p_j\right) \br_k &= \frac{1}{2 \pi \imath} \int_{\partial \Omega} \frac{1}{s_i - z} \bT\left(z, p_j\right)^{-1} \br_k \; \dop z.
        \end{aligned}
        \end{equation*}
        \State Use rational approximation as in Algorithm~\ref{alg:rationalApproximation} or \cite{ABG20,Ant25,ionita2014} to exactly recover or approximate
        \begin{equation}
        \label{eq:probedApproximations}
            \bL_k(z,p) \approx \bell_k^\top \bH(z,p) \quad \text{and} \quad \bR_k(z,p) \approx \bH(z,p) \br_k.
        \end{equation}
        \StartOnlinePhase
        \State Form Loewner matrices $\L, \L_s \in \C^{r \times r}$ via the entries
        \begin{equation*}
            (\L)_{ij} = \frac{\bL_i(\theta_i,\hp)^{\kern-1pt \top} \br_j - \bell_i^\top \bR_j(\sigma_j,\hp)}{\theta_i - \sigma_j}, \qquad (\L_{s})_{ij} = \frac{\theta_i \bL_i(\theta_i,\hp)^{\kern-1pt \top} \br_j - \sigma_j \bell_i^\top \bR_j(\sigma_j,\hp)}{\theta_i - \sigma_j}.
        \end{equation*}
        \State Compute the reduced SVDs with $\bSigma, \bSigma_s \in \R^{m \times m}$:
        \begin{equation*}
             \bX \bSigma \bY^* = \begin{bmatrix} \L & \L_s \end{bmatrix} \qquad \text{and} \qquad \bX_s^{} \bSigma_s^{} \bY_s^* = \begin{bmatrix} \L \\ \L_s \end{bmatrix}.
        \end{equation*}
        \State Solve the generalized eigenvalue problem
        \begin{equation}
        \label{eq:JComputation}
            \left(\bX^* \L_s \bY_s \right) \bs_j = \lambda_j \left(\bX^* \L \bY_s \right) \bs_j \quad \rightarrow \quad \bJ = \operatorname{diag}(\lambda_1,\ldots,\lambda_m).
        \end{equation}
        \State Form the block matrices
        \begin{equation*}
            \bB^{\kern-1pt \top} = \begin{bmatrix}
                \bL_1(\theta_1,\hp) &
                \!\cdots\! &
                \bL_r(\theta_r,\hp)
            \end{bmatrix}
            \quad \text{and} \quad \bC = \begin{bmatrix}
                \bR_1(\sigma_1,\hp) & \!\cdots\! & \bR_r(\sigma_r,\hp)
            \end{bmatrix}.
        \end{equation*}
        \State Using $\bS = [\bs_1,\ldots,\bs_m]$, construct eigenvectors of $\bT(\cdot, \hp)$:
        \begin{equation*}
            \bV = \bC \bY_s \bS \quad \text{and} \quad \bW^* = -\bS^{-1} \left(\bX^* \L \bY_s \right)^{-1} \bX^* \bB.
        \end{equation*}
    \end{algorithmic}
\end{algorithm}

The online phase of Algorithm~\ref{alg:pmpl} develops approximations of $\bV(\hp), \bW(\hp)$ and $\bJ(\hp)$ from Theorem~\ref{theorem:parametricKeldysh} for an arbitrary $\hp \in \cP$. We obtain these quantities by following steps similar to lines~2--6 of Algorithm~\ref{alg:multipointloewner}. To make the relation between Algorithm~\ref{alg:multipointloewner} and the online phase of Algorithm~\ref{alg:pmpl} clear, compare lines~2--6 of the former with lines~3--7 of the latter. These lines are \emph{identical}, except that $\bell_i^\top \bH(\theta_i)$ is replaced by $\bL_i(\theta_i,\hp)$ and $\bH(\sigma_j)@\br_j$ is replaced with $\bR_j(\sigma_j,\hp)$. Hence, if $\bL_k(z,p) = \smash{\bell_k^\top} \bH(z,p)$ and $\bR_k(z,p) = \bH(z,p)@\br_k$ in \eqref{eq:probedApproximations}, then all the theoretical guarantees of the multipoint Loewner algorithm \cite{brennan2023} apply here as well. 
The next proposition, a natural consequence of the observation that lines~2--6 of Algorithm~\ref{alg:multipointloewner} essentially coincide with lines~3--7 of Algorithm~\ref{alg:pmpl}, makes this statement concrete.

\begin{proposition}
    In Algorithm~\ref{alg:pmpl}, assume that for all $z\in\Omega$ and $p\in\pdom$,
    \begin{equation*}
        \bL_k(z,p) = \bell_k^\top \bH(z,p) \quad \text{and} \quad \bR_k(z,p) = \bH(z,p) \br_k;
    \end{equation*}
    cf.~equation~\eqref{eq:probedApproximations}. Then the output of Algorithm~\ref{alg:pmpl} is equivalent to the output of Algorithm~\ref{alg:multipointloewner} applied to $\bT(\cdot,\hp)$ (assuming exact arithmetic and exact contour integral evaluations). If the assumptions for exact eigenvalue recovery of Algorithm~\ref{alg:multipointloewner} are satisfied \cite{brennan2023} and \eqref{eq:JComputation} is replaced with the computation of a Jordan canonical form, then Algorithm~\ref{alg:pmpl} recovers the matrices $\bJ(\hp)$, $\bV(\hp)$ and $\bW(\hp)$ from Theorem~\ref{theorem:parametricKeldysh}.
\end{proposition}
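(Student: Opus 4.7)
The strategy is a direct structural comparison between Algorithm~\ref{alg:pmpl} (executed at parameter $\hp$) and Algorithm~\ref{alg:multipointloewner} applied to the single-parameter function $\bT(\cdot,\hp)$. First I would establish that, under the stated assumption, the data consumed by the online phase of Algorithm~\ref{alg:pmpl} coincides with the tangential samples produced by line~1 of Algorithm~\ref{alg:multipointloewner}. Then I would verify, line by line, that the remaining operations (Loewner matrix assembly, SVDs, generalized eigenvalue problem, block-matrix formation, eigenvector recovery) are formally identical in the two algorithms and therefore produce the same output.

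The key bridge is the pointwise identity $\bH(z,\hp)=\bV(\hp)(z\bI-\bJ(\hp))^{-1}\bW(\hp)^*$ furnished by the final assertion of Theorem~\ref{theorem:parametricKeldysh}. This shows that the $\hp$-slice of the parametric Keldysh function is precisely the Keldysh function of the non-parametric problem $\bT(\cdot,\hp)$ in the sense of Theorem~\ref{theorem:generalKeldysh}. Under exact arithmetic and exact quadrature, line~1 of Algorithm~\ref{alg:multipointloewner} therefore returns the tangential samples $\bell_i^\top\bH(\theta_i,\hp)$ and $\bH(\sigma_j,\hp)\br_j$; the standing hypothesis $\bL_k(z,p)=\bell_k^\top\bH(z,p)$ and $\bR_k(z,p)=\bH(z,p)\br_k$ on $\Omega\times\pdom$ (extended to the exterior nodes $\theta_i,\sigma_j$ by the identity principle, since both sides are rational in $z$ and agree on a set with accumulation points) identifies these with $\bL_i(\theta_i,\hp)$ and $\bR_j(\sigma_j,\hp)$. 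After this identification, the entries of $\L$ and $\L_s$ in line~3 of Algorithm~\ref{alg:pmpl} equal those in line~2 of Algorithm~\ref{alg:multipointloewner}; the SVDs in line~4 then act on identical matrices; the generalized eigenvalue problem in line~5 produces the same $\bJ$ and $\bS$; and the block matrices $\bB$, $\bC$ together with the eigenvector formulas in lines~6--7 reproduce those of Algorithm~\ref{alg:multipointloewner}.

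For the second claim, I would invoke the exact-recovery guarantee for the non-parametric multipoint Loewner method from~\cite{brennan2023}. That guarantee states that, under the listed hypotheses and with \eqref{eq:JComputation} replaced by a Jordan-canonical-form computation, Algorithm~\ref{alg:multipointloewner} applied to $\bT(\cdot,\hp)$ returns the Jordan matrix and (generalized) eigenvector matrices of Theorem~\ref{theorem:generalKeldysh} for $\bT(\cdot,\hp)$, together with the correct normalization. By the concluding statement of Theorem~\ref{theorem:parametricKeldysh}, these matrices are exactly $\bJ(\hp)$, $\bV(\hp)$, and $\bW(\hp)$, so the conclusion follows by composing the equivalence established above with this exact-recovery result.

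The main obstacle is not mathematical depth but bookkeeping: one must carefully match the transposition conventions (e.g., that $\bL_k(\theta_k,\hp)$, as assembled into the columns of $\bB^{\kern-1pt\top}$ in line~6 of Algorithm~\ref{alg:pmpl}, plays the role of $\bH(\theta_k,\hp)^{\kern-1pt\top}\bell_k$ in line~5 of Algorithm~\ref{alg:multipointloewner}) and confirm that the sampling nodes $\{\theta_i\}\cup\{\sigma_i\}$ used in the online phase are indeed contained in the offline sampling set $\{s_i\}$. The non-uniqueness of the SVDs introduces an inherent unitary ambiguity, but this does not affect the invariants $\bJ(\hp)$, $\bV(\hp)$, $\bW(\hp)$ up to the eigenvector scaling encoded in the normalization of Theorem~\ref{theorem:generalKeldysh}.
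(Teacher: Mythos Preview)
Your proposal is correct and follows the same approach as the paper: a line-by-line identification of the online phase of Algorithm~\ref{alg:pmpl} with lines~2--6 of Algorithm~\ref{alg:multipointloewner}, after substituting the assumed identities $\bL_k=\bell_k^\top\bH$ and $\bR_k=\bH\,\br_k$. The paper does not supply a formal proof at all---it simply states the proposition as ``a natural consequence'' of the preceding paragraph's observation that the two algorithms' steps are identical once the probed samples agree---so your treatment is in fact more thorough (e.g., your remark on extending the hypothesis from $z\in\Omega$ to the exterior nodes $\theta_i,\sigma_j$ via rational continuation, and your comment on SVD non-uniqueness, go beyond what the paper spells out).
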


We emphasize that the offline and online phases of Algorithm~\ref{alg:pmpl} are distinct: after executing the offline phase \emph{just once}, the online phase can be executed many times for arbitrary $\hp \in \cP$.\ \ This element is crucial, as the offline phase can be expensive due to the need to solve linear systems involving $\bT(z_k,p_j)$ for the quadrature. The online phase is generally much faster, avoiding computation with~$\bT$.

\begin{remark}
    Algorithm~\ref{alg:pmpl} can be simplified significantly if we only seek eigenvalues (not eigenvectors), and all those eigenvalues are simple. In this case, choose random vectors $\bell$, $\br$, use quadrature to approximate
    \begin{equation*}
        \bell^\top \bH(s_i,p_j)@@\br = \frac{1}{2 \pi \imath} \int_{\partial \Omega} \frac{1}{s_i - z} \bell^\top \bT\left(z, p_j\right)^{-1} \br \; \dop z,
    \end{equation*}
    and then recover (or approximate) the scalar-valued function $f(z,p) = \bell^\top \bH(z,p)@@ \br$ via multivariate rational interpolation methods. In this case, the singularities of $f$ serve to approximate the singularities of $\bH$, i.e., the eigenvalues of $\bT$. 

    In general, the singularities of $\bell^\top \bH(z,p)@\br$, $\bL_k(z,p)$ or $\bR_k(z,p)$ in Algorithm~\ref{alg:pmpl} need not give us complete information about the eigenvalues of $\bT$.\ \ Consider
    \begin{equation*}
        \bH(z,p) = \frac{1}{z-p} \bI: \C\times \C \to \C^{n\times n}
    \end{equation*}
    for which $\lambda=p$ has algebraic multiplicity $n$.
    For any $\bell_k$ we have that $\bell_k^\top \bH(z,p)$ can be exactly recovered by a function that reads $\bL_k(z,p) = (z-p)^{-1}\bell_k^\top$. This scenario clearly shows that information about the multiplicity of eigenvalues is lost if we only observe the singularities of the functions $\bL_k$ or $\bR_k$. While the algebraic multiplicity and the eigenvectors could be obtained by computing (the dimension of) the nullspace of $\bT(\lambda_j(p),p)$, the precise Jordan structure and normalization of eigenvectors as in \eqref{eq:eigenvectorNormalization} is not recovered this way. In contrast, Algorithm~\ref{alg:pmpl} can in theory recover these structures if the required assumptions are satisfied.
\end{remark}

\begin{remark}
    In practice, the inputs to Algorithm~\ref{alg:pmpl} are more flexible than presented here. The number of probing directions may be chosen independently of the number of sampling values, and $\{ \theta_j \}_{j=1}^r$ and $\{ \sigma_j \}_{j=1}^r$ used in the online phase can be chosen independently of the sampling values $\{ s_j \}_{j=1}^{2r}$ that were used during the offline phase. This flexibility enables experimentation with different choices of sampling points during the online phase to achieve better numerical properties of the eigenvalue problem in \eqref{eq:JComputation}. These aspects of the multipoint Loewner method are discussed with more context in \cite{brennan2023}. Further, the online phase of Algorithm~\ref{alg:pmpl} is not necessarily restricted to $\hp \in \pdom$.\ \ In particular, we may choose any $\hp \in \C$ and compute eigenvalue approximations for $\bT(\cdot,\hp)$. In some cases this yields good approximations (as demonstrated in the numerical example in Section~\ref{sec:delayExample}), but may also give unreliable predictions (for example, as in the second setup in Section~\ref{sec:linearExample} for $\hp=1$). 
\end{remark}

    Let us consider the computational complexity of Algorithm~\ref{alg:pmpl}. Line~1 of the algorithm requires computing quadrature-based approximations of $\bell^\top \bH(z,p)$ and $\bH(z,p)@\br$ for all combinations of sampling points and probing directions. We state the computational complexity of this step in terms of linear systems of equations involving the matrix $\bT(z,p)$ that need to be solved. Suppose we use $N$ quadrature nodes for a contour integral approximation along the lines of \eqref{eq:parametricQuadrature}, $r$ left and right probing directions, $q$ parameter sampling values $\{ p_j \}_{j=1}^q$, and $d = 2r$ sampling values $\{ s_j \}_{j=1}^d$. Then line~1 of Algorithm~\ref{alg:pmpl} can be executed by solving $2 N q$ linear systems (each having $r$ right-hand sides) of the form $\bT(z,p)^\top \left[\bb_1,\ldots,\bb_r\right] = \left[\bell_1,\ldots,\bell_r\right]$ and $\bT(z,p)\left[\bc_1,\ldots,\bc_r\right] = \left[\br_1,\ldots,\br_r \right]$ (to compute $\bb_k^\top = \bell_k^\top \bT(z,p)^{-1}$ and $\bc_k = \bT(z,p)^{-1}\br_k$ for $k=1,\ldots,r$). This calculation requires evaluating $\bT$ at $Nq$ values. The rational interpolation procedure in 
line~2 (based on Algorithm~\ref{alg:rationalApproximation}) requires $\cO(q d^3 + d q c^2 m^2)$ floating-point operations, where $m+1$ and $c+1$ are the number of interpolated sampling points of the rational interpolant in $z$ and $p$, respectively. Executing the online phase of the algorithm is associated with $\cO(d^3)$ floating-point operations for a single parameter value $\hp$. Importantly, the cost for the online phase is independent of the quadrature nodes $N$ and the function $\bT$ and hence only involves the dimension $n$ for construction of the eigenvector matrices (if desired).
    
   In contrast, Algorithm~\ref{alg:multipointloewner} only required solving $2N$ linear systems, each having $r$ right-hand sides. The increased computational cost of Algorithm~\ref{alg:pmpl} is explained by the need to sample at each of the $q$ parameter sampling points. However, there is an additional discrepancy between the two algorithms worth noting. Algorithm~\ref{alg:multipointloewner} is based on the Mayo--Antoulas Loewner framework, in which each probing direction is associated with exactly one sampling point. In the parametric Loewner framework, probing directions and sampling points are independent of one another, and sampling is required for all combinations of them. This discrepancy leads to additional differences in both the numerical setup and the computational cost between the parametric and non-parametric algorithms arising in the rational interpolation step.

\section{Numerical examples}
\label{sec:numericalExamples}
In this section we demonstrate our previous theoretical discussion and algorithmic development via several numerical experiments. There are numerous inputs to Algorithm~\ref{alg:pmpl} and Algorithm~\ref{alg:rationalApproximation} whose effect on approximation quality we could study in detail, such as the choice of quadrature rule, number of quadrature nodes, number of sampling points, rational approximation convergence criterion, etc. Our primary goal here is not to investigate good choices for these inputs in detail, but rather to highlight the implications of the theoretical results from Section~\ref{sec:parametricKeldysh} for the computational setting. For algorithmic details pertaining to the multipoint Loewner method in the non-parametric case (how the number of quadrature nodes and location of sampling points affects accuracy and numerical stability), see~\cite{brennan2023}.  Practical and theoretical aspects of multivariate rational interpolation methods are discussed in \cite{ABG20,Ant25,rodriguez2023,ionita2014}. The results in this section were primarily obtained on an HP Laptop 15-dw1xxx, equipped with an Intel(R) Core(TM) i3-10110U CPU and 16 GB of RAM, using MATLAB R2024b Update 5 (24.2.0.2863752). The code is made available under an open-source license \cite{balicki2025code}.

\subsection{A linear parametric eigenvalue problem}
\label{sec:linearExample}

We begin be revisiting the linear parametric eigenvalue problem introduced in Example~\ref{example:simple}, to discuss the numerical aspects of the two setups introduced in Example~\ref{example:simpleKeldysh} and shown in Figure~\ref{figure:linearExample}. For both scenarios we run Algorithm~\ref{alg:pmpl} with $40$ uniformly spaced sampling values for $\{ s_i \}_{i=1}^{40}$ and $\{ p_j \}_{j=1}^{40}$. Further, $\{ \theta_i\}_{i=1}^{20}$ and $\{\sigma_i\}_{i=1}^{20}$ are selected as interlaced points from $\{ s_i \}_{i=1}^{40}$ to promote numerical stability of the Loewner matrices~\cite{EI22}. Additionally, we use $N=512$ nodes for a trapezoidal quadrature rule to approximate the contour integrals as in \eqref{eq:parametricQuadrature}.  Figure~\ref{figure:linearExample} shows the relative residuals for the computed eigenvalues.
For the first setup with $\cP = [0.75,1.25]$, the fact that $\bH(z,p)$ is rational in $z$ and $p$ makes us expect exact recovery of eigenvalues and eigenvectors. In this case, the rational approximation procedure from Appendix~\ref{sec:rationalApproximation} computes $\bL_k$ and $\bR_k$ with numerator and denominator polynomial degrees matching those of $\bell_k^\top \bH(z,p)$ and $\bH(z,p)@\br_k$. Hence, these functions are in theory recovered exactly by the Loewner interpolant. Of course, numerical errors lead to near-exact recovery in practice.

\begin{figure}[b!]
  \centering
  \begin{subfigure}[t]{0.49\textwidth}
    \centering
    \begin{tikzpicture}
      \begin{axis}[
        axis line style={line width=0.75pt},
        legend style={line width=0.75pt},
        width=3in,
        height=2in,
        grid=major,
        grid style={line width=.2pt, draw=gray!40, dashed},
        xmin = 0, xmax = 2,
        ymin = 1e-17, ymax = 1e3,
        ymode = log,
        ytick={1e-16,1e-10,1e-4,1e2},
        line width = 2pt,
        xlabel = {$p$},
        ylabel = {Relative Residual},
        legend style={at={(0.985,0.97)}, anchor=north east},
        legend cell align={left}
      ]

        \addplot[
            forget plot,
            draw=gray,
            line width=0.9pt,
            fill=gray!80,
            fill opacity=0.3,
            pattern=north east lines,
            pattern color=gray!80
        ] coordinates {
            (0.75,1e-17)
            (0.75,1e3)
            (1.25,1e3)
            (1.25,1e-17)
        } -- cycle;
      
        \addplot[red] table {figure_data/linear_example_1_left.dat};
        \addlegendentry{$p \notin \cP$}
        \addplot[blue] table {figure_data/linear_example_1_center.dat};
        \addlegendentry{$p \in \cP$}
        \addplot[red] table {figure_data/linear_example_1_right.dat};
      \end{axis}
    \end{tikzpicture}
  \end{subfigure}
  \begin{subfigure}[t]{0.49\textwidth}
    \centering
    \begin{tikzpicture}
      \begin{axis}[
        axis line style={line width=0.75pt},
        legend style={line width=0.75pt},
        width=3in,
        height=2in,
        grid=major,
        grid style={line width=.2pt, draw=gray!40, dashed},
        xmin = 0, xmax = 2,
        ymin = 1e-17, ymax = 1e3,
        ymode = log,
        ytick={1e-16,1e-10,1e-4,1e2},
        line width = 2pt,
        xlabel = {$p$},
        ylabel = {Relative Residual},
        legend style={at={(0.03,0.06)}, anchor=south west},
        legend cell align={left}
      ]

        \addplot[
            forget plot,
            draw=gray,
            line width=0.9pt,
            fill=gray!80,
            fill opacity=0.3,
            pattern=north east lines,
            pattern color=gray!80
        ] coordinates {
            (1.25,1e-17)
            (1.25,1e3)
            (1.5,1e3)
            (1.5,1e-17)
        } -- cycle;
      
        \addplot[red] table {figure_data/linear_example_2_left.dat};
        \addlegendentry{$p \notin \cP$}
        \addplot[blue] table {figure_data/linear_example_2_center.dat};
        \addlegendentry{$p \in \cP$}
        \addplot[red] table {figure_data/linear_example_2_right.dat};
      \end{axis}
    \end{tikzpicture}
  \end{subfigure}

\vspace*{-5pt}
  \caption{Maximum relative residuals $\lVert \bT(\lambda(p),p) \bv(p) \rVert_2 / \lVert \bv(p) \rVert_2$ for the two setups discussed in Section~\ref{sec:linearExample}. In the left plot we exactly recover eigenvalues and eigenvectors, up to numerical errors. (The peak at $p\approx 0.62\not\in\cP$ occurs because $\bH$ has a singularity close to that value.) In the right plot we seek an approximation of a single eigenvalue. The approximation quality deteriorates quickly as $p$ moves toward the branch point at $p=1 \notin \cP$. For $p>1.5$ no branch points appear, resulting in a good approximation of the eigenvalue and eigenvector. }
  \label{figure:linearExample}
\end{figure}
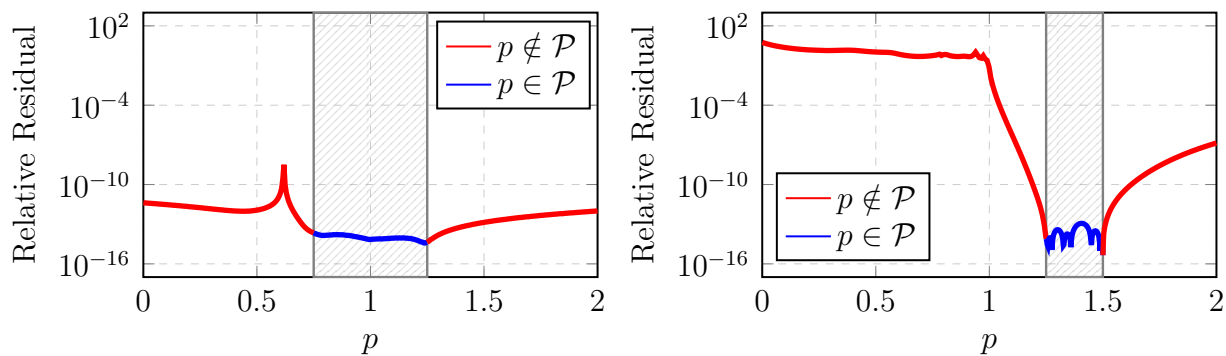

In the second setup where $\cP = [1.25,1.5]$, we capture one eigenvalue that corresponds to a single branch of a multivalued analytic function. In this case, $\bH(z,p)$ is not rational in $p$, and hence we merely approximate $\bell_k^\top \bH(z,p)$ and $\bH(z,p) \br_k$ via rational functions. In particular, Algorithm~\ref{alg:rationalApproximation} computes $\bL_k(z,p)$ and $\bR_k(z,p)$ with a denominator that is degree-$1$ in $z$ and degree-$7$ in $p$. For $p\in\cP$, this approximation is sufficient to estimate the  eigenvalue in $\Omega$ with errors close to machine precision for $p\in\cP$.\ \  For $p \notin \cP$ the approximation quality remains acceptable, except when $p$ moves toward and past the branch point at $p=1$.

In both these cases, the accuracy of the eigenpairs extend well beyond the limits of $\cP$, and indeed include parameter values for which the eigenvalues fall outside $\Omega$.\ \ It would be interesting to study this \emph{overconvergence} phenomenon more thoroughly.

\subsection{Stability analysis of a delay differential equation}
\label{sec:delayExample}

Next, we consider a pNLEVP that arises in the study of the delay differential equation
\[ \bx'(t) = -\bE@\bx(t) - 0.01 @ \bx(t-p).\] 
The parameter $p>0$ specifies the delay.
To assess the asymptotic stability of solutions, one seeks the rightmost eigenvalues of the pNLEVP
\begin{equation} \label{eqn:delay_pNLEVP}
    \bT(z,p) = \big(z + 0.01@\eop^{-p@z}\big) @\bI + \bE.
\end{equation}
If all eigenvalues are in the open left-half of the complex plane, then all solutions will be asymptotically stable.
(See~\cite{MN14} for details of the stability analysis.)
We take $\bE \in \R^{10 \times 10}$ to be diagonal with entries logarithmically spaced in $[10^{-4},10^{10}]$. 
Though $\bT$ is finite dimensional, it has infinitely many eigenvalues; our choice of $\bT$ allows these eigenvalues to be computed by evaluating branches of the Lambert-W function~\cite[p.~148]{MN14}. 
Figure~\ref{fig:plot_eig_p30} shows a portion of the spectrum for delay $p=30$ (computed in Mathematica).

\begin{figure}[b!]
\begin{center}
\includegraphics[width=3in]{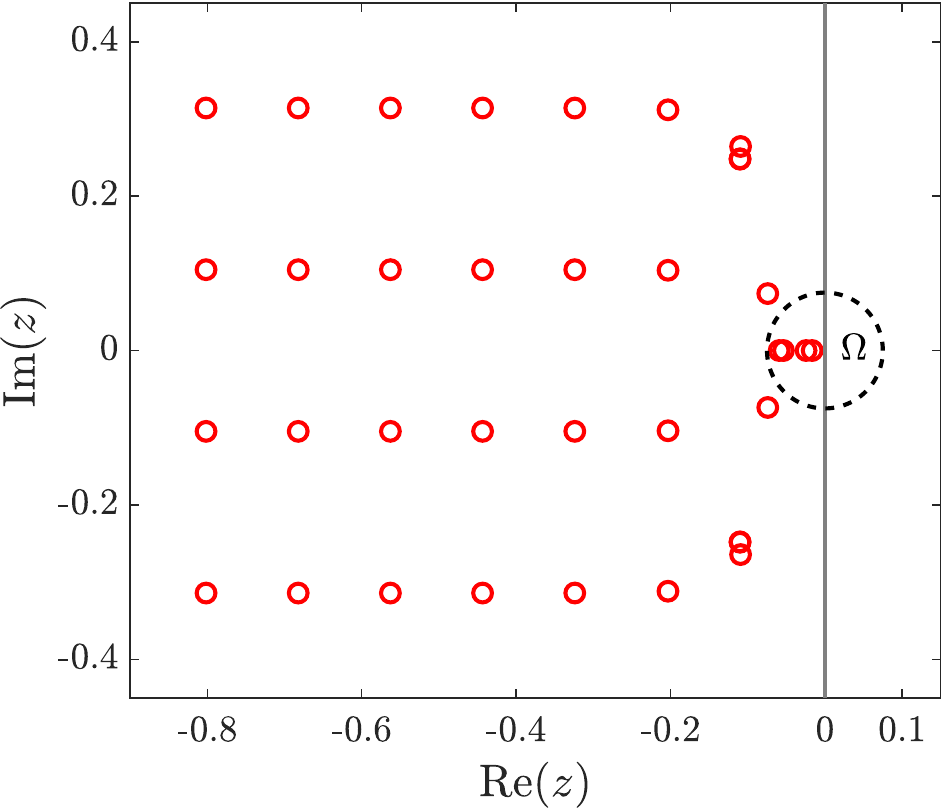}
\end{center}

\vspace*{-5pt}
\caption{\label{fig:plot_eig_p30}A portion of the spectrum for the pNLEVP~(\ref{eqn:delay_pNLEVP}) with parameter value $p=30$.  The rightmost eigenvalue determines the asymptotic stability of the system.}
\end{figure}

We seek to analyze how the asymptotic stability of solutions depends on the delay parameter $p$.  We pick a parameter set $\cP = [30,35]$ and domain $\Omega = \{ z \in \C \;:\; \lvert z \rvert < 0.075 \}$ of interest. As the inputs for Algorithm~\ref{alg:multipointloewner} we choose $\{s_i\}_{i=1}^{40}$ and $\{ p_j \}_{j=1}^{40}$ as $40$ uniformly sampled points in $\{ z \in \C: |z| = 0.1 \}$ and $\cP$, respectively. Additionally, $\{ \theta_i \}_{i=1}^{20}$ and $\{ \sigma_i \}_{i=1}^{20}$ are selected as interlaced points from $\{ s_i \}_{i=1}^{40}$ and $N=128$ quadrature nodes are used for contour integral approximations. The rational approximation procedure in Algorithm~\ref{alg:rationalApproximation} computes $\bL_k(z,p) \approx \bell_k^\top\bH(z,p)$ and $\bR_k(z,p) \approx \bH(z,p)@\br_k$ with numerator and denominator that are degree-$4$ polynomials in $z$ and degree-$5$ polynomials in $p$. In particular, this means that exactly four eigenvalues in $\Omega \times \cP$ are approximated through our computational procedure.

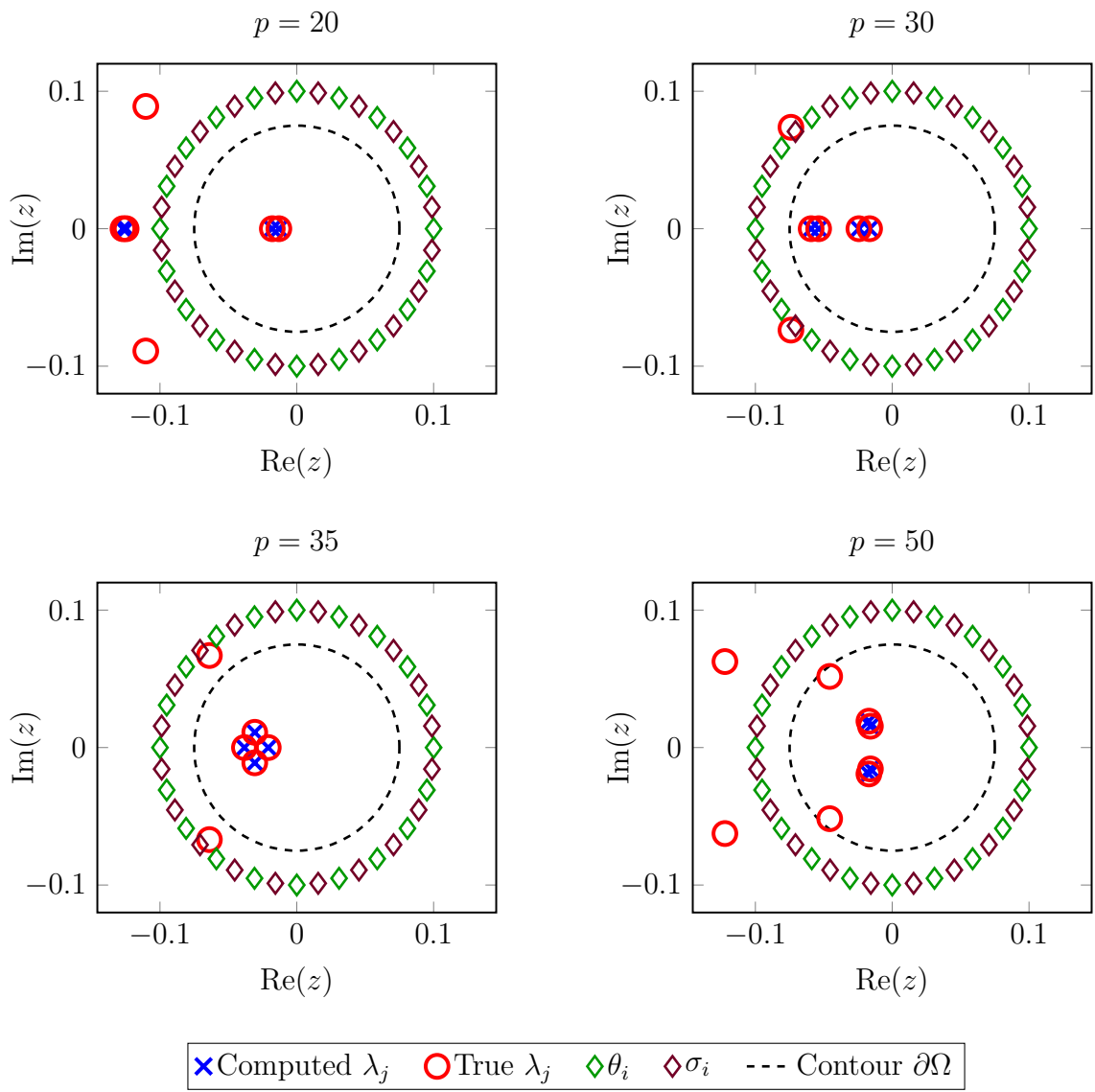
\begin{figure}[t]
\begin{subfigure}[t]{0.45\textwidth}
  \centering
  \begin{tikzpicture}
    \begin{axis}[
      title={$p = 20$},
      xlabel={$\Re(z)$},
      ylabel={$\Im(z)$},
      xtick={-0.1,0,0.1},
      ytick={-0.1,0,0.1},
      every axis y label/.style={at={(-0.18,.5)},rotate=90,anchor=center},
      xmin=-0.12, xmax=0.12,
      ymin=-0.12, ymax=0.12,
      line width=0.75pt,
      width=2.8in,
      axis equal
    ]
      \addplot[only marks, mark=x, blue, line width=1.5pt, mark size=4.5pt] table {figure_data/parametric_delay_20.dat};
      \addplot[only marks, mark=o, red, line width=1.5pt, mark size=4.5pt] table {figure_data/parametric_delay_reference_20.dat};
      \addplot[only marks, mark=diamond, green!60!black, line width=1pt, mark size=4pt] table {figure_data/parametric_delay_left_interpolation.dat};
      \addplot[only marks, mark=diamond, purple!60!black, line width=1pt, mark size=4pt] table {figure_data/parametric_delay_right_interpolation.dat};
      \draw[line width=1pt, black, dashed] (axis cs:0,0) circle [radius=0.075];
    \end{axis}
  \end{tikzpicture}
\end{subfigure}%
\qquad
\begin{subfigure}[t]{0.45\textwidth}
  \centering
  \begin{tikzpicture}
    \begin{axis}[
      title={$p = 30$},
      xlabel={$\Re(z)$},
      ylabel={$\Im(z)$},
      xtick={-0.1,0,0.1},
      ytick={-0.1,0,0.1},
      every axis y label/.style={at={(-0.18,.5)},rotate=90,anchor=center},
      xmin=-0.12, xmax=0.12,
      ymin=-0.12, ymax=0.12,
      line width=.75pt,
      width=2.8in,
      axis equal
    ]
      \addplot[only marks, mark=x, blue, line width=1.5pt, mark size=4.5pt] table {figure_data/parametric_delay_30.dat};
      \addplot[only marks, mark=o, red, line width=1.5pt, mark size=4.5pt] table {figure_data/parametric_delay_reference_30.dat};
      \addplot[only marks, mark=diamond, green!60!black, line width=1pt, mark size=4pt] table {figure_data/parametric_delay_left_interpolation.dat};
      \addplot[only marks, mark=diamond, purple!60!black, line width=1pt, mark size=4pt] table {figure_data/parametric_delay_right_interpolation.dat};
      \draw[line width=1pt, black, dashed] (axis cs:0,0) circle [radius=0.075];
    \end{axis}
  \end{tikzpicture}
\end{subfigure}

\vspace{1em}

\begin{subfigure}[t]{0.45\textwidth}
  \centering
  \begin{tikzpicture}
    \begin{axis}[
      title={$p = 35$},
      xlabel={$\Re(z)$},
      ylabel={$\Im(z)$},
      xtick={-0.1,0,0.1},
      ytick={-0.1,0,0.1},
      every axis y label/.style={at={(-0.18,.5)},rotate=90,anchor=center},
      xmin=-0.12, xmax=0.12,
      ymin=-0.12, ymax=0.12,
      line width=.75pt,
      width=2.8in,
      axis equal
    ]
      \addplot[only marks, mark=x, blue, line width=1.5pt, mark size=4.5pt] table {figure_data/parametric_delay_35.dat};
      \addplot[only marks, mark=o, red, line width=1.5pt, mark size=4.5pt] table {figure_data/parametric_delay_reference_35.dat};
      \addplot[only marks, mark=diamond, green!60!black, line width=1pt, mark size=4pt] table {figure_data/parametric_delay_left_interpolation.dat};
      \addplot[only marks, mark=diamond, purple!60!black, line width=1pt, mark size=4pt] table {figure_data/parametric_delay_right_interpolation.dat};
      \draw[line width=1pt, black, dashed] (axis cs:0,0) circle [radius=0.075];
    \end{axis}
  \end{tikzpicture}
\end{subfigure}%
\qquad
\begin{subfigure}[t]{0.45\textwidth}
  \centering
  \begin{tikzpicture}
    \begin{axis}[
      title={$p = 50$},
      xlabel={$\Re(z)$},
      ylabel={$\Im(z)$},
      xtick={-0.1,0,0.1},
      ytick={-0.1,0,0.1},
      every axis y label/.style={at={(-0.18,.5)},rotate=90,anchor=center},
      xmin=-0.12, xmax=0.12,
      ymin=-0.12, ymax=0.12,
      line width=.75pt,
      width=2.8in,
      axis equal
    ]
      \addplot[only marks, mark=x, blue, line width=1.5pt, mark size=4.5pt] table {figure_data/parametric_delay_50.dat};
      \addplot[only marks, mark=o, red, line width=1.5pt, mark size=4.5pt] table {figure_data/parametric_delay_reference_50.dat};
      \addplot[only marks, mark=diamond, green!60!black, line width=1pt, mark size=4pt] table {figure_data/parametric_delay_left_interpolation.dat};
      \addplot[only marks, mark=diamond, purple!60!black, line width=1pt, mark size=4pt] table {figure_data/parametric_delay_right_interpolation.dat};
      \draw[line width=1pt, black, dashed] (axis cs:0,0) circle [radius=0.075];
    \end{axis}
  \end{tikzpicture}
\end{subfigure}

\begin{center}
\begin{tikzpicture}
  \begin{axis}[
    hide axis,
    xmin=0, xmax=1, ymin=0, ymax=1,
    legend columns=5,
    legend style={at={(0.5,1)}, anchor=south, font=\normalsize, /tikz/every even column/.append style={column sep=1em}},
  ]
    \addlegendimage{only marks, mark=x, blue, line width=1.5pt, mark size=4.5pt}
    \addlegendimage{only marks, mark=o, red, line width=1.5pt, mark size=4.5pt}
    \addlegendimage{only marks, mark=diamond, green!60!black, line width=1pt, mark size=4pt}
    \addlegendimage{only marks, mark=diamond, purple!60!black, line width=1pt, mark size=4pt}
    \addlegendimage{line width=1pt, black, dashed}
    \legend{Computed $\lambda_j$, True $\lambda_j$, $\theta_i$, $\sigma_i$, Contour $\partial\Omega$}
  \end{axis}
\end{tikzpicture}
\end{center}
\caption{Eigenvalues for the delay pNLEVP example. The true eigenvalues are known for this problem setup and allow for a comparison. While only samples in $\cP = [30,35]$ were used in the offline phase of the parametric multipoint Loewner method, eigenvalues for $p=20$ and $p=50$ are also approximated accurately. During the online phase with $p\not\in\pdom$, computed eigenvalues need not fall in the domain $\Omega$ used in the offline phase: in the top-left plot, two of the predicted eigenvalues fall outside $\Omega$, while in the bottom-right plot, $\Omega$ contains two extra eigenvalues that are not approximated.}
\label{figure:delayEigenvalues}
\end{figure}

We proceed by computing eigenvalue and eigenvector approximations for the parameter values $p = 30,35\in\cP$ and $p=20,50\not\in\cP$. Figure~\ref{figure:delayEigenvalues} depicts the approximated eigenvalues and contour $\partial \Omega$ for these parameter values. Note that for $p \in \cP$ we have exactly $4$ eigenvalues inside $\Omega$, reflected in the plots for $p=30,35$. For $p=20$, two of the computed eigenvalues fall outside the domain $\Omega$ but are still captured accurately. We emphasize that the requirement that the number of eigenvalues in $\Omega$ remains constant only applies to the offline phase of Algorithm~\ref{alg:pmpl} with $p\in\cP$.\ \ During the online phase, especially with $p\not\in\pdom$, this is not explicitly required. However, as we showed in the previous example, the approximation quality may deteriorate quickly for parameter values outside $\cP$, especially near branch points. A similar discussion applies for $p=50$. In this case, two additional eigenvalues move inside the domain $\Omega$. However, since we only captured four eigenvalues during the offline phase of Algorithm~\ref{alg:pmpl}, we will only be able to compute approximations for four eigenvalues during the online phase for this example, regardless of how $p$ is chosen.

\subsection{Damped string example}
\label{sec:dampedString}
We next consider a model for a string vibrating over the physical domain $[0,1]$ with homogeneous Dirichlet boundary conditions and constant viscous damping of strength $p\ge 0$ applied on only the middle half of the domain, $[0.25,0.75]$~\cite{Emb25}.  Writing $\hz=\sqrt{z^2 + 2pz}$ for convenience, we have
\begin{equation*}
    \bT(z,p) = \begin{bmatrix}
        -\sinh(z/4) & \sinh(\hz/4) & \cosh(\hz/4) & 0 \\
        -z \cosh(z/4) & \hz \cosh(\hz/4) & \hz \sinh(\hz/4) & 0 \\
        0 & -\sinh(3\hz/4) & - \cosh(3\hz/4) & \sinh(z/4) \\
        0 & -\hz \cosh(3\hz/4) & -\hz \sinh(3\hz/4) & -z\cosh(z/4)
    \end{bmatrix}.
\end{equation*}
For $p=0$ (no damping), all the eigenvalues are simple, appearing as complex conjugate pairs on the imaginary axis. As $p$ increases, the eigenvalues move into the complex left-half plane, tending toward the real axis. We seek the parameter that maximizes the asymptotic rate of decay of the energy in the string, given by the value of $p\ge 0$ that minimizes the spectral abscissa (real part of the rightmost eigenvalue)~\cite{CZ94}. For each $p \in \cP$, $\bT(\cdot,p)$ has infinitely many eigenvalues. For our investigation it will suffice to only consider a finite number of eigenvalues that are close to the real axis.

An important difficulty arises for this problem: $\bT(\cdot,p)$ is not analytic in $\C$ due to branch points at $z=0$ and $z = -2p$, i.e., the zeros of the expression inside the root $\hz = \sqrt{z^2 + 2pz}$. Hence, we can specify a branch cut via a curve that connects $z=0$ and $z=-2p$, which then allows us to define two branches of the multivalued function $\bT(\cdot,p)$ associated with that branch cut. The two simplest choices are the line connecting $0$ and $-2p$ along the real axis through $-p$, and the line connecting $0$ and $-2p$ along the real axis through $\infty$. The first option seems simpler, but would interfere with the desired eigenvalues; we thus use the second option for our computations. (Note that the branch cut moves as we vary $p$,  an additional wrinkle of this problem.) 

We consider two cases, both taking the domain $\Omega$ to be the interior of a complex ellipse with center $c \in \C$, real semi-axis $s_r$ and imaginary semi-axis $s_i$:
\begin{equation*}
    \cE(c,s_r,s_i) := \left\{ z \in \mathbb{C}: 
\left( \frac{\operatorname{Re}(z - c)}{s_r} \right)^2 
+ \left( \frac{\operatorname{Im}(z - c)}{s_i} \right)^2 
< 1 \right\}.
\end{equation*}
Denote the boundary as $\partial \cE$. For the first case, take $\cP = [3,4]$ and $\Omega = \cE(-3,2.5,10)$; for the second case, take $\cP = [4,5]$ and $\Omega = \cE(-2,1.75,15)$. For the first case, we use $500$ uniformly sampled points along $\partial \cE(-3,3,11)$ for the set $\{ s_i \}_{i=1}^{500}$ and $25$ uniformly spaced points $\{ p_j \}_{j=1}^{25}$ in $\cP$.\ \ For the second case, we use $500$ uniformly sampled points along $\partial \cE(-2,2,16)$ for the set $\{ s_i \}_{i=1}^{500}$ and $25$ uniformly spaced points $\{ p_j \}_{j=1}^{25}$ in $\cP$.\ \ As for the last two examples, we use interlaced points from $\{ s_i \}_{i=1}^{500}$ as $\{ \theta_i \}_{i=1}^{250}$ and $\{ \sigma_i \}_{i=1}^{250}$. Figure~\ref{figure:dampedStringFixedDomain} shows $\Omega$ and the eigenvalue approximations for both cases.

The plot for the first case shows that we accurately capture a pair of eigenvalues that exhibits nontrivial behavior: it starts off as a complex conjugate pair for $p=3$, turns into a real eigenvalue with algebraic multiplicity two for $p \approx 3.71$, and is finally given by two real eigenvalues at $p=4$. For the second case, none of the computed eigenvalue curves coalesce: the eigenvalues remain simple in $\Omega$.\ \ For this case we seek the value of $p$ that minimizes the spectral abscissa, apparently near $p \approx 4.71$. To verify the accuracy of our computed results, we compute eigenvalue residuals for various parameter values. In particular, for both cases we consider $200$ uniformly spaced test parameters in $\cP$ for which we compute the maximum of the eigenvalue residuals $\lVert \bT(\lambda(p),p) \bv(p) \rVert_2 / \lVert \bv(p) \rVert_2$. For the first case the residual remains below $3 \times 10^{-10}$ and for the second case it stays below $3 \times 10^{-8}$.

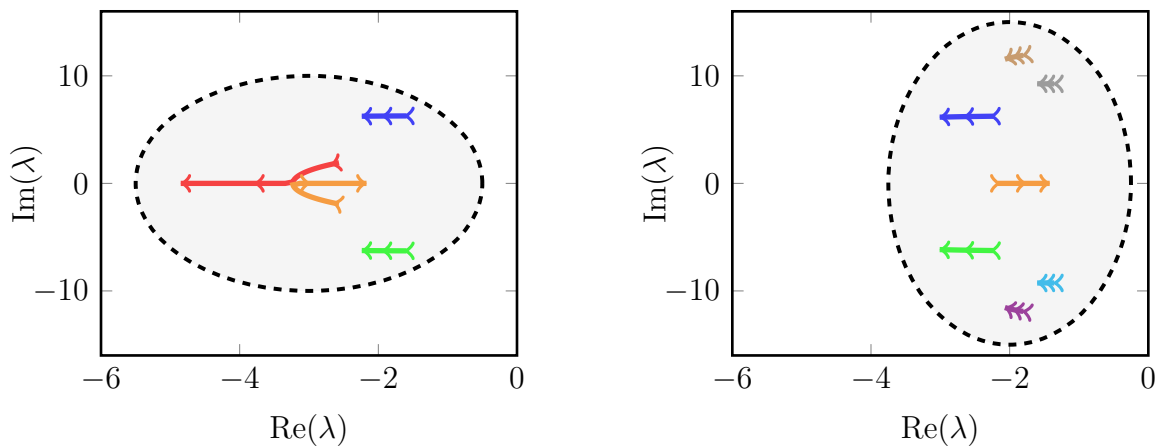
\begin{figure}
    \centering
    \begin{subfigure}{0.45\textwidth}
    \begin{tikzpicture}
      \begin{axis}[
        axis line style={line width=1pt},
        legend style={line width=1pt},
        width=2.8in,
        xmin=-6, xmax=0,
        ymin=-16, ymax=16,
        xlabel={$\Re(\lambda)$},
        ylabel={$\Im(\lambda)$},
        every axis y label/.style={at={(-0.18,.5)},rotate=90,anchor=center},
        line width=1.5pt,
        every axis plot/.style={line width=2pt},
        legend columns=2,
        legend pos=north west,
        legend cell align=left,
      ]
    
        \tikzset{
          arrow on path/.style={
            postaction={
              decorate,
              decoration={
                markings,
                  mark=at position 0.05 with {\arrow[line width=1.5pt]{>}},
                  mark=at position 0.55 with {\arrow[line width=1.5pt]{>}},
                  mark=at position 1 with {\arrow[line width=1.5pt]{>}}
              }
            }
          }
        }

        \addplot[blue, arrow on path] table [x index=0, y index=4] {figure_data/parametric_wave_coalesce.dat};
        \addplot[green, arrow on path] table [x index=1, y index=5] {figure_data/parametric_wave_coalesce.dat};
        \addplot[red, arrow on path] table [x index=2, y index=6] {figure_data/parametric_wave_coalesce.dat};
        \addplot[orange, arrow on path] table [x index=3, y index=7] {figure_data/parametric_wave_coalesce.dat};\
        \filldraw[fill=gray!25, fill opacity=0.3, draw=none, dashed]
  (axis cs:-3,0) ellipse [x radius=2.5, y radius=10];
        \draw[line width=1.5pt, black, dashed] (axis cs:-3,0) ellipse [x radius=2.5, y radius=10];
    
      \end{axis}
    \end{tikzpicture}
    \end{subfigure}
    \qquad
    \begin{subfigure}{0.45\textwidth}
    \begin{tikzpicture}
      \begin{axis}[
        axis line style={line width=1pt},
        legend style={line width=1pt},
        width=2.8in,
        xmin=-6, xmax=0,
        ymin=-16, ymax=16,
        xlabel={$\Re(\lambda)$},
        ylabel={$\Im(\lambda)$},
        every axis y label/.style={at={(-0.18,.5)},rotate=90,anchor=center},
        line width=1.5pt,
        every axis plot/.style={line width=2pt},
        legend columns=2,
        legend pos=north west,
        legend cell align=left,
        ]

        \tikzset{
          arrow on path/.style={
            postaction={
              decorate,
              decoration={
                markings,
                  mark=at position 0.05 with {\arrow[line width=1.5pt]{>}},
                  mark=at position 0.55 with {\arrow[line width=1.5pt]{>}},
                  mark=at position 1 with {\arrow[line width=1.5pt]{>}}
              }
            }
          }
        }

        \addplot[brown, arrow on path] table [x index=0, y index=7] {figure_data/parametric_wave_optimal_damping.dat};
        \addplot[gray, arrow on path] table [x index=1, y index=8] {figure_data/parametric_wave_optimal_damping.dat};
        \addplot[blue, arrow on path] table [x index=2, y index=9] {figure_data/parametric_wave_optimal_damping.dat};
        \addplot[orange, arrow on path] table [x index=3, y index=10] {figure_data/parametric_wave_optimal_damping.dat};
        \addplot[violet, arrow on path] table [x index=4, y index=11] {figure_data/parametric_wave_optimal_damping.dat};
        \addplot[green, arrow on path] table [x index=5, y index=12] {figure_data/parametric_wave_optimal_damping.dat};
        \addplot[cyan, arrow on path] table [x index=6, y index=13] {figure_data/parametric_wave_optimal_damping.dat};
        \filldraw[fill=gray!25, fill opacity=0.3, draw=none, dashed]
  (axis cs:-2,0) ellipse [x radius=1.75, y radius=15];
        \draw[line width=1.5pt, black, dashed] (axis cs:-2,0) ellipse [x radius=1.75, y radius=15];
      \end{axis}
    \end{tikzpicture}
    \end{subfigure}

\vspace*{-7pt}
    \caption{Eigenvalues and domains for the two cases of the damped string example. In the first case (left plot) for $\cP = [3,4]$, a defective eigenvalue appears at $p \approx 3.71$. The parametric multipoint Loewner method is able to handle this case naturally, without further input. In the second case (right plot) for $\cP = [4,5]$, the spectral abscissa appears to be minimized for $p \approx 4.71$.}
    \label{figure:dampedStringFixedDomain}
\end{figure}

\section{Conclusions}
\label{sec:conclusions}
Parameterized nonlinear eigenvalue problems pose a computational challenge.  As a platform for algorithm development, we propose a parametric extension to Keldysh's theorem, which decomposes a pNLEVP function into the form $\bT(z,p) = \bR(z,p)/u(z,p) + \bN(z,p)$.  The eigenvalue of interest are captured by the functions $\bR(z,p)$ and $u(z,p)$, which are analytic in the domain of interest, $\Omega\times \pdom$.\ \ In contrast to a pointwise application of the standard Keldysh decomposition for each $p\in\pdom$, our extension naturally captures the $p$-dependence of the eigenvalues, allowing for scenarios where the Jordan structure changes with $p$.  Contour integral methods based on rational approximation build nicely upon this foundation.  Using the multipoint Loewner framework, we propose an algorithm for approximating the eigenvalues in $\Omega \times \pdom$.\ \ When $\bR(z,p)$ and $u(z,p)$ are rational functions, this method can in principle recover the eigenvalues exactly (assuming the contour integration and other computations are exact);  when those functions are not rational, they can be approximated to high accuracy with rational approximants.  Our experiments show that such approximations can still yield excellent approximations of the eigenvalue.  

Opportunities for further research abound. One could extend our main result to allow for domains $\Omega(p)$ that evolve with the parameter $p$.  Algorithmically, one might consider optimal methods for choosing the parameter sample points $p_j \in \pdom$, or relaxing the need for samples at all combinations of interpolation points $s_i$, parameter samples $p_j$, and directions $\bell_k$ and $\br_k$.  One might also consider variations that extend contour integrals algorithms based on Markov parameters, as described in~\cite{asakura2009,beyn2012,brennan2023}, as well as the potential for convergence beyond the target parameter domain $\Pi$.

\appendix
\section{Keldysh's Theorem}
\label{sec:generalKeldysh}
Here we state Keldysh's Theorem as given by G\"uttel and Tisseur in \cite[thm.~2.8]{guttel2017}, including the eigenvector normalizations. In the same survey the authors precisely define algebraic, geometric and partial multiplicity, along with generalized eigenvectors and their connection to Jordan chains and invariant pairs. We omit these definitions here, focusing on the aspects most relevant to our contributions. For detailed proofs of the statements in the theorem, see~\cite{mennicken2003}, where $\bV (z \bI - \bJ)^{-1} \bW^*$ is given in an equivalent expanded form.
\begin{theorem}[Keldysh, as presented in~\cite{guttel2017}]
    \label{theorem:generalKeldysh}
    Let $\bT : \Omega \rightarrow \C^{n \times n}$ be analytic in the nonempty domain $\Omega \subset \C$, and assume $\det(\bT(\cdot)) \not\equiv 0$. Let $\lambda_1,\ldots,\lambda_s$ be the distinct eigenvalues of $\bT$ in $\Omega$. For $i = 1,\ldots, s$ let $m_{i1} \geq \cdots \geq m_{id_i}$ denote the partial multiplicities of $\lambda_i$ with geometric multiplicity $d_i$, and denote the total number of eigenvalues (counting multiplicities) as
    \begin{equation*}
        m = \sum_{i=1}^s \sum_{j=1}^{d_i} m_{ij}.
    \end{equation*}
    Then there exist matrices
    \begin{alignat*}{3}
        \bV &= \left[ \bV_1, \ldots, \bV_s \right] \in \C^{n \times m},  &\  \bV_i &= \left[ \bV_1^{(i)}, \ldots, \bV_{d_i}^{(i)} \right],  &\ \bV_j^{(i)} &= \left[ \bv_1^{(i,j)}, \ldots, \bv_{m_{ij}}^{(i,j)} \right], \\
        \bW &= \left[ \bW_1, \ldots, \bW_s \right] \in \C^{n \times m},  &\ \bW_i &= \left[ \bW_1^{(i)}, \ldots, \bW_{d_i}^{(i)} \right],  &\ \bW_j^{(i)} &= \left[ \bw_{m_{ij}}^{(i,j)}, \ldots, \bw_1^{(i,j)} \right],
    \end{alignat*}
    whose columns are generalized right and left eigenvectors $\bv_k^{(i,j)}$ and $\bw_k^{(i,j)}$, and
    \begin{equation*}
        \bJ = \operatorname{diag}(\bJ_1,\ldots,\bJ_s) \in \C^{m \times m}, \qquad \bJ_i = \operatorname{diag}(\bJ_i^{(1)},\ldots,\bJ_i^{(d_i)}),
    \end{equation*}
    where
    \begin{equation*}
        \bJ_i^{(j)} = \begin{bmatrix}
            \ \lambda_i & 1 & & \\[-1pt]
            & \lambda_i & \ddots & \\[-1pt]
            & & \ddots & 1\ \\[-1pt]
            & & & \lambda_i\  
        \end{bmatrix} \in \C^{m_{ij} \times m_{ij}},
    \end{equation*}
    such that
    \begin{equation*}
        \bT(z)^{-1} = \bV (z \bI - \bJ)^{-1} \bW^* + \bN(z)
    \end{equation*}
    for some $\bN: \Omega \rightarrow \C^{n \times n}$ analytic in $\Omega$.\ \ The eigenvectors are normalized such that
    \begin{equation}
    \label{eq:eigenvectorNormalization}
        \sum_{\alpha=0}^t \sum_{\beta=1}^{m_{ij}} \left( \bw_{t-\alpha + 1}^{(i,k)} \right)^* \frac{\bT^{(\alpha+\beta)}(\lambda_i)}{(\alpha + \beta)!} \bv_{m_{ij}-\beta+1}^{(i,j)} = \delta_{jk} \delta_{0t}
    \end{equation}
    for $t=0,\ldots,m_{ik}-1$, and $j,k = 1,\ldots,d_i$, and $i = 1,\ldots,s$, where 
    \begin{equation*}
        \delta_{jk} = \begin{cases}
        1, & \text{if } j = k; \\
        0, & \text{if } j \ne k.
        \end{cases}
    \end{equation*}
\end{theorem}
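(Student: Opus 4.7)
The plan is to establish the decomposition locally at each distinct eigenvalue $\lambda_i\in\Omega$ and then piece the local results together into the global identity. At each such point I would invoke the local Smith canonical form of the analytic matrix-valued function $\bT$: in a neighborhood of $\lambda_i$ one may write
\[\bT(z) \;=\; \bE_i(z)\,\bD_i(z)\,\bF_i(z),\]
with $\bE_i,\bF_i$ analytic and pointwise invertible near $\lambda_i$, and $\bD_i(z) = \operatorname{diag}\bigl((z-\lambda_i)^{m_{i1}},\ldots,(z-\lambda_i)^{m_{id_i}},1,\ldots,1\bigr)$ encoding the partial multiplicities. Inverting yields $\bT(z)^{-1} = \bF_i(z)^{-1}\,\bD_i(z)^{-1}\,\bE_i(z)^{-1}$, so the entire singular behavior of the resolvent at $\lambda_i$ is carried by the negative powers $(z-\lambda_i)^{-m_{ij}}$ on the diagonal of $\bD_i(z)^{-1}$.

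Next I would extract the principal part of $\bT(z)^{-1}$ at $\lambda_i$ by Taylor expanding $\bF_i(z)^{-1}$ and $\bE_i(z)^{-1}$ about $\lambda_i$ and retaining only the coefficients attached to negative powers of $(z-\lambda_i)$. A direct rearrangement rewrites this principal part as
\[\bH_i(z) \;=\; \bV_i\,(z\bI-\bJ_i)^{-1}\,\bW_i^{*},\]
where $\bJ_i = \operatorname{diag}(\bJ_i^{(1)},\ldots,\bJ_i^{(d_i)})$ is the Jordan matrix from the statement and the columns of $\bV_i$ and $\bW_i$ are built from successive Taylor coefficients of the columns of $\bF_i(z)^{-1}$ and of the rows of $\bE_i(z)^{-1}$ at $z=\lambda_i$. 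To verify that these columns genuinely form Jordan chains of right and left generalized eigenvectors of $\bT$ of the prescribed lengths, I would differentiate the identity $\bT(z)\bF_i(z)^{-1} = \bE_i(z)\bD_i(z)$ at $z=\lambda_i$ (and similarly the transposed identity) to read off the chain relations $\sum_{\alpha=0}^{k-1}\tfrac{1}{\alpha!}\bT^{(\alpha)}(\lambda_i)\bv_{k-\alpha}^{(i,j)} = \bzero$ and their left analogues.

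With all local pieces in hand, set $\bV=[\bV_1,\ldots,\bV_s]$, $\bW=[\bW_1,\ldots,\bW_s]$, $\bJ=\operatorname{diag}(\bJ_1,\ldots,\bJ_s)$, and define
\[\bN(z) \;:=\; \bT(z)^{-1} \;-\; \bV(z\bI-\bJ)^{-1}\bW^{*}.\]
Because the principal part at every $\lambda_i$ cancels by construction, $\bN$ has only removable singularities in $\Omega$ and therefore extends to an analytic function on $\Omega$. The normalization \eqref{eq:eigenvectorNormalization} is then obtained by expanding the identity $\bF_i(z)^{-1}\bF_i(z) = \bI$ together with the factorization $\bT(z) = \bE_i(z)\bD_i(z)\bF_i(z)$ in powers of $z-\lambda_i$ and equating coefficients. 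The reversed ordering baked into $\bW_j^{(i)} = [\bw_{m_{ij}}^{(i,j)},\ldots,\bw_1^{(i,j)}]$ is precisely what forces the resulting bilinear pairing between the left and right chains to collapse onto the Kronecker-delta right-hand side.

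The main obstacle lies in this last step. Granting the local Smith form (itself a nontrivial structural result for analytic matrix functions, for which I would appeal to standard references such as Gohberg--Lancaster--Rodman, or the treatment cited in \cite{mennicken2003}), the analyticity of $\bN$ and the identification of $\bH_i$ with the rational summand $\bV_i(z\bI-\bJ_i)^{-1}\bW_i^{*}$ are essentially algebraic bookkeeping. The normalization identity, by contrast, encodes a biorthogonality between left and right Jordan chains measured by a Taylor-coefficient bilinear form built from the derivatives $\bT^{(\alpha+\beta)}(\lambda_i)/(\alpha+\beta)!$, and pinning it down in the exact closed form \eqref{eq:eigenvectorNormalization} is a delicate indexing exercise; the reversal convention for $\bW_j^{(i)}$ and the particular scaling of Taylor coefficients chosen for $\bV_i$ and $\bW_i$ must be matched precisely so that all cross terms vanish and the diagonal terms land at $1$.
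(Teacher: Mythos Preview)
The paper does not prove this theorem at all: Appendix~\ref{sec:generalKeldysh} merely \emph{states} the result as quoted from G\"uttel and Tisseur~\cite{guttel2017} and directs the reader to~\cite{mennicken2003} for detailed proofs. There is therefore no ``paper's own proof'' to compare against.

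That said, your sketch is consistent with the standard route taken in the cited references. The local Smith form factorization $\bT(z)=\bE_i(z)\bD_i(z)\bF_i(z)$ with analytic invertible $\bE_i,\bF_i$ and diagonal $\bD_i$ carrying the partial multiplicities is exactly the device used in the Gohberg--Lancaster--Rodman and Mennicken--M\"oller treatments to extract the principal part of $\bT(z)^{-1}$ at each eigenvalue and to identify the columns of $\bV_i,\bW_i$ with Jordan chains. Your own caveat is accurate: the existence of the local Smith form for analytic matrix functions is a nontrivial input you must cite, and the normalization identity~\eqref{eq:eigenvectorNormalization} is indeed the most delicate bookkeeping step, requiring careful matching of the reversed ordering in $\bW_j^{(i)}$ with the Taylor coefficients of $\bE_i^{-1}$. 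If you were to flesh this out, the cleanest way to get the normalization is usually to expand the identity $\bT(z)\,\bT(z)^{-1}=\bI$ (rather than $\bF_i^{-1}\bF_i=\bI$) in powers of $z-\lambda_i$ and read off the constraints on the principal-part coefficients directly; this avoids having to track the intermediate factors $\bE_i,\bF_i$ through the biorthogonality computation.
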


\section{Rational approximation in line 2 of Algorithm~\ref{alg:pmpl}}
\label{sec:rationalApproximation}
Here we outline our proposed method for rational approximation of the functions $\bell_k^\top \bH(z,p)$ and $\bH(z,p) \br_k$ that constitute Line~2 of Algorithm~\ref{alg:pmpl}. We primarily rely on the p-AAA approach for approximating vector-valued data from~\cite{rodriguez2023}. For scalar-valued data, p-AAA uses a set of samples $\{ \cD(s_i,p_j) \}$ and sampling values $\{ s_i \}_{i=1}^{2r}, \{ p_j \}_{j=1}^q \subset \C$ to compute a rational function $f$ in the barycentric form \eqref{eq:barycentricForm} such that $f(z,p) \approx \cD(z,p)$. The algorithm achieves this goal by interpolating the data at select nodes $\{\xi_i\} \subset \{ s_i \}_{i=1}^{2r}$ and $\{\pi_j\} \subset \{ p_j \}_{j=1}^q$, and then solving a linear least-squares problem to find coefficients $\alpha_{ij}$ such that the non-interpolated data is approximated in a least-squares sense. See~\cite{rodriguez2023} for details. An important property of p-AAA is that, under certain conditions, the algorithm will exactly recover a rational function from data, in which case it is equivalent to the parametric Loewner framework \cite{ionita2014}.

Here, we focus on the p-AAA formulation from~\cite{rodriguez2023} for matrix-valued multivariate rational approximation. Suppose we would like to approximate a matrix-valued function $\bH$ using samples in $\{ \bH(s_i, p_j) \}$. The first step of the idea in \cite{rodriguez2023} is to generate random vectors $\bell$ and $\br$, then approximate the samples in $\{ \cD(s_i,p_j) = \bell^\top \bH(s_i, p_j) \br \}$ via a scalar-valued multivariate rational function, as in \eqref{eq:barycentricForm}. The interpolated values $\cD(\xi_i,\pi_j)$ in \eqref{eq:barycentricForm} are then replaced by the corresponding matrix values $\bH(\xi_i,\pi_j)$, resulting in a matrix-valued interpolant. Under certain conditions, the matrix-valued rational function $\bH$ can be recovered exactly with this approach.

In Algorithm~\ref{alg:rationalApproximation}, we propose a method for rational approximation of $\bell_k^\top \bH(z,p)$ and $\bH(z,p)@\br_k$ that follows a similar approach but replaces $\cD(\xi_i,\pi_j)$ with the probed data $\bell_k \bH(\xi_i, \pi_j)$ and $\bH(\xi_i, \pi_j)@ \br_k$. Since we only anticipate access to probed data to begin with, in \eqref{eq:scalarApproximationData} we propose constructing the scalar data $\cD(s_i,p_j)$ using $\bell$ and $\br$ that are linear combinations of the given probing vectors. We suggest using the mean $\bell = (\bell_1 + \cdots + \bell_r)/r$ and $\br = (\br_1 + \cdots + \br_r)/r$, as this choice performed well in practice (although other linear combinations can be chosen). We emphasize that this construction does not require any matrix-valued samples of the form $\bH(s_i,p_j)$. Recall our important assumption that, for each $p \in \cP$, the number of eigenvalues in $\Omega$ does not change. We verify this assumption by computing the rank of several Loewner matrices~\eqref{eq:eigenvalueAssumptionCheck}, checking that these ranks coincide (giving the number of eigenvalues, $m$, in $\Omega$). This information can then also be incorporated into the p-AAA algorithm to ensure that the computed function $f$ in \eqref{eq:barycentricForm} has the correct number of poles in $z$.

\begin{algorithm}[t!]
    \caption{Rational Approximation for Algorithm~\ref{alg:pmpl}}
    \label{alg:rationalApproximation}
    \begin{algorithmic}[1]
        \Require{Sampling values $\{s_i\}_{i=1}^{2r} \subset \C$, $\{p_j\}_{j=1}^q \subset \cP$, probing vectors $\{ \bell_k \}_{k=1}^r$, $\{ \br_k \}_{k=1}^r\subset \C^n$, approximate probed samples $ \bell_k^\top \bH(s_i, p_j) $, $\bH(s_i, p_j)@\br_k$ from step~1 of Algorithm~\ref{alg:pmpl}}
        \Ensure{$\bL_1,\ldots,\bL_r:\C\times \C \to \C^{1\times n}$ and $\bR_1,\ldots,\bR_r:\C\times \C \to \C^{n\times 1}$}
        \State Partition $\{s_i\}_{i=1}^{2r} = \{\theta_i\}_{i=1}^r \cup \{\sigma_i\}_{i=1}^r$ and for each $p \in \{p_i\}_{i=1}^q$ determine the number of eigenvalues $m$ in $\Omega$ as the common rank of the Loewner matrices
        \begin{equation}
        \label{eq:eigenvalueAssumptionCheck}
            \L_{ij}(p) = \frac{\bell_i^\top [\bH(\theta_i,p) - \bH(\sigma_j,p)] \br_j}{\theta_i - \sigma_j}.
        \end{equation}
        \State For $\bell, \br$ chosen as a linear combination of $\bell_1,\ldots,\bell_r$ and $\br_1,\ldots,\br_r$, use the probed samples $\bell_k^\top \bH(s_i, p_j)$ and $\bH(s_i, p_j)@\br_k$ to construct the data
        \begin{equation}
            \label{eq:scalarApproximationData}
            \cD(s_i,p_j) = \bell^\top \bH(s_i,p_j)@\br.
        \end{equation}
        \State Use the p-AAA algorithm from \cite{rodriguez2023} to compute the nodes $\{\xi_i\} \in \{ s_i \}_{i=1}^{2r}$, $\{\pi_j\} \in \{ p_j \}_{j=1}^q$ and coefficients $\alpha_{ij} \in \C$ of the rational function
        \begin{equation}
        \label{eq:barycentricForm}
            f(z,p) = \sum_{i=1}^{m+1} \sum_{j=1}^{c+1} \frac{\alpha_{i j} \cD(\xi_i,\pi_j)}{(z - \xi_i)(p - \pi_{j})} \Bigg/  \sum_{i=1}^{m+1} \sum_{j=1}^{c+1} \frac{\alpha_{i j}}{(z - \xi_i)(p - \pi_{j})}
        \end{equation}
        that approximates the samples $\cD(s_i,p_j)$. 
        \State Using the computed nodes and coefficients, form the vector-valued functions
        \begin{equation}
        \label{eq:vectorValuedRationalApproximants}
        \begin{aligned}
            \bL_k(z,p) &\!=\! \sum_{i=1}^{m+1} \sum_{j=1}^{c+1} \frac{\alpha_{i j} \bell_k^\top \bH(\xi_i,\pi_j)}{(z \!-\! \xi_i)(p \!-\! \pi_{j})} \Bigg/  \sum_{i=1}^{m+1} \sum_{j=1}^{c+1} \frac{\alpha_{i j}}{(z \!-\! \xi_i)(p \!-\! \pi_{j})} \approx \bell_k^\top \bH(z,p)\\
            \bR_k(z,p) &\!=\! \sum_{i=1}^{m+1} \sum_{j=1}^{c+1} \frac{\alpha_{i j} \bH(\xi_i,\pi_j)@\br_k}{(z \!-\! \xi_i)(p \!-\! \pi_{j})} \Bigg/  \sum_{i=1}^{m+1} \sum_{j=1}^{c+1} \frac{\alpha_{i j}}{(z \!-\! \xi_i)(p \!-\! \pi_{j})} \approx \bH(z,p) @\br_k.
        \end{aligned}
        \end{equation}
    \end{algorithmic}
\end{algorithm}

\section{Proof of the parametric Keldysh decomposition}
\label{sec:proofAppendix}
In this section, we present the proof of Theorem~\ref{theorem:parametricKeldysh}.

\subsection{Preliminaries for the two-variable case}
We first review some standard results and establish key preliminaries needed for the proof. 
 Note that a single-variable analytic function $f$ with a zero of multiplicity $m$ at $z_0\in\C$ can be written as
\begin{equation} \label{eq:svWPT}
    f(z) = (z-z_0)^m h(z),
\end{equation}
for some nonzero function $h$ analytic near $z_0$.  This simple fact becomes more involved in the two-variable case; it is described by the Weierstrass preparation theorem (see e.g., \cite{ebeling2007,gunning2015} for formulations). In the upcoming theorems and proofs, \emph{Weierstrass polynomials} will play an important role.
\begin{definition}[Weierstrass polynomial]
    \label{def:Weierstrass}
    For $j=0,\ldots,k-1$, let $\alpha_j: \C \rightarrow \C$ be analytic near $0$, and $\alpha_j(0) = 0$. We call a function $u: \C\times\C \rightarrow \C$ of the form
    \begin{equation}
        \label{eq:WeierstrassPolynomial}
        u(z,p) = z^k + \alpha_{k-1}(p) z^{k-1} + \cdots + \alpha_1(p) z + \alpha_0(p)
    \end{equation}
    a Weierstrass polynomial of degree $k$. Further, if $v: \C\times\C \rightarrow \C$ is a function such that $v(z+z_0,p+p_0)$ is a Weierstrass polynomial for some $(z_0,p_0) \in \C\times\C$, we call $v$ a shifted Weierstrass polynomial, and say that $v$ is a Weierstrass polynomial at $(z_0,p_0)$.
\end{definition}
Weierstrass polynomials are polynomials in the $z$ variable, and hence $u(\cdot,p)$ is analytic in $\C$ for any choice of $p$. They are also analytic in a neighborhood of $0$ in the $p$ variable. One can regard a Weierstrass polynomial as a polynomial in $z$ with coefficients varying analytically in $p$.  The zeros of a polynomial vary continuously with its coefficients (see, e.g., \cite{zedek_continuity_1965}), a fact that can be applied to prove the following proposition.
\begin{proposition}
    \label{proposition:ContZeros}
    Suppose $u$ is a Weierstrass polynomial of degree $k$, analytic in the neighborhood $\Omega \times \pdom$ of $(0,0)$. For any given neighborhood $\Omega' \subset \Omega$ of $0$ there exists a neighborhood $\pdom'$ of $0$ such that $u$ is analytic in $\Omega' \times \pdom'$ and, for all $p \in \pdom'$, the $k$ zeros of $u(\cdot,p)$ lie in $\Omega'$.
\end{proposition}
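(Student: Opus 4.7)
The plan is to invoke Rouch\'e's theorem, comparing $u(\cdot,p)$ with its $p=0$ limit $u(\cdot,0) = z^k$. Since each coefficient satisfies $\alpha_j(0)=0$, the lower-order terms of $u(\cdot,p)$ can be made uniformly small on a fixed circle by restricting $p$ to a sufficiently small neighborhood of $0$.

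First I would shrink $\Omega'$ if necessary so that it contains a closed disk $\overline{B(0,r)}$ with $\overline{B(0,r)} \subset \Omega$; this is possible because $\Omega'$ is an open neighborhood of $0$. On the circle $\{|z|=r\}$ the leading term satisfies $|z^k| = r^k$. Using continuity of each $\alpha_j$ at $0$ together with $\alpha_j(0)=0$, I would then choose $\delta > 0$ such that $\{|p|<\delta\} \subset \pdom$ and
\begin{equation*}
    |\alpha_j(p)| < \frac{r^{k-j}}{2k} \quad \text{for all } j=0,\ldots,k-1 \text{ whenever } |p|<\delta.
\end{equation*}
With this bound, on $\{|z|=r\}$ we obtain
\begin{equation*}
    \Bigl| \sum_{j=0}^{k-1} \alpha_j(p) z^j \Bigr| \;\le\; \sum_{j=0}^{k-1} |\alpha_j(p)|\, r^j \;<\; k \cdot \frac{r^k}{2k} \;=\; \frac{r^k}{2} \;<\; |z^k|.
\end{equation*}

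By Rouch\'e's theorem, $u(\cdot,p) = z^k + \sum_{j<k} \alpha_j(p) z^j$ then has the same number of zeros (counted with multiplicity) inside $\{|z|<r\}$ as $z^k$, namely $k$. Since $u(\cdot,p)$ is a polynomial of degree $k$ in $z$, these account for \emph{all} of its zeros. Setting $\pdom' := \{|p|<\delta\}$ yields the desired neighborhood, with all $k$ zeros of $u(\cdot,p)$ lying in $\{|z|<r\} \subset \Omega'$ for every $p \in \pdom'$. As an alternative approach, one could appeal directly to the continuity of the zeros of a polynomial as a function of its coefficients \cite{zedek_continuity_1965}, using that $\alpha_j(p) \to 0$ as $p \to 0$. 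I do not anticipate any substantial obstacle; the only care needed is in bookkeeping the strict inequality required for Rouch\'e's theorem.
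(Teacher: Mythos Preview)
Your proof is correct. The paper does not give a formal proof of this proposition; it simply invokes the continuous dependence of polynomial zeros on coefficients (citing \cite{zedek_continuity_1965}) together with $u(z,0)=z^k$, and you yourself note this alternative at the end. Your Rouch\'e argument is the standard way to establish that continuity result in the first place, so the two approaches are essentially the same, with yours being the more explicit and self-contained version. One cosmetic point: the phrase ``shrink $\Omega'$ if necessary so that it contains a closed disk'' is unnecessary---any open neighborhood of $0$ already contains such a disk---and you should state (trivially) that $u$ is analytic on $\Omega'\times\pdom'$ since it is polynomial in $z$ and $\pdom'\subset\pdom$.
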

In general, the zeros of $u(\cdot,p)$ may lie anywhere in $\C$, since $u(\cdot,p)$ is a polynomial. However, since the zeros of a polynomial depend continuously on its coefficients, we can choose the parameter domain $\pdom'$ sufficiently small that all $k$ zeros of $u(\cdot,p)$ remain arbitrarily close to $0$ for all $p \in \pdom'$. This argument can be applied to Weierstrass polynomials, since $u(z,0) = z^k$, and hence for $p$ close to $0$, the $k$ zeros of $u(\cdot,p)$ move within a neighborhood of $0$. Proposition~\ref{proposition:ContZeros} follows from this observation. In the following, we will specify the domain of analyticity of (shifted) Weierstrass polynomials in several contexts. Based on the discussion above, we will assume (without loss of generality) that all zeros of $u(\cdot,p)$ are contained in the specified domain of analyticity. If necessary, the domain $\pdom'$ can always be shrunk to ensure that this property holds.

\bigskip
\begin{theorem}[Weierstrass preparation theorem]
    \label{theorem:Weierstrass}
    Suppose $f: \C\times\C \rightarrow \C$ is analytic in a neighborhood $\Omega \times \pdom$ of $(0,0)$, $f(0,0) = 0$ and $f(\cdot,0) \not\equiv 0$ in $\Omega$. There exists a neighborhood $\Omega' \times \pdom'$ of $(0,0)$, a unique Weierstrass polynomial $u$ analytic in $\Omega' \times \pdom'$, and a function $h$ analytic in $\Omega' \times \pdom'$ with $h(0,0) \neq 0$ such that for all $(z,p) \in \Omega'\times \pdom'$,
    \begin{equation} \label{eq:WPT}
        f(z,p) = u(z,p)@h(z,p).
    \end{equation}
\end{theorem}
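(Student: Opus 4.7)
The plan is to construct $u$ by taking the product over the zeros of $f(\cdot,p)$ near $z=0$, and then to set $h := f/u$. The key idea is that, while the individual zeros $\zeta_j(p)$ may depend on $p$ in a multivalued / branched fashion, their elementary symmetric functions are single-valued and analytic. First I would exploit the hypothesis $f(\cdot,0)\not\equiv 0$: since $f(\cdot,0)$ is analytic and vanishes at $z=0$, its zero there has some finite order $m\geq 1$, and I can pick a radius $r>0$ so small that $f(z,0)\neq 0$ for $0<|z|\leq r$. By joint continuity of $f$ on the compact circle $|z|=r$, there is a neighborhood $\pdom'$ of $0\in\C$ such that $f(z,p)\neq 0$ for all $|z|=r$ and $p\in\pdom'$. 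The argument principle then gives that the zero count
\[ \frac{1}{2\pi\imath}\oint_{|z|=r}\frac{\partial_z f(z,p)}{f(z,p)}\,\dop z \]
is integer-valued and continuous in $p$, hence constantly equal to $m$ on $\pdom'$. Call these zeros $\zeta_1(p),\ldots,\zeta_m(p)$.

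Next I would make the coefficients of the candidate polynomial
\[ u(z,p) \;=\; \prod_{j=1}^m\bigl(z-\zeta_j(p)\bigr) \;=\; z^m + \alpha_{m-1}(p)\,z^{m-1}+\cdots+\alpha_0(p) \]
analytic in $p$. For this, I would invoke the power-sum identity
\[ \sigma_k(p) \;=\; \sum_{j=1}^m \zeta_j(p)^k \;=\; \frac{1}{2\pi\imath}\oint_{|z|=r} z^k\,\frac{\partial_z f(z,p)}{f(z,p)}\,\dop z, \]
whose right-hand side is a parameter integral of a function jointly analytic in $(z,p)$ on an open neighborhood of $\{|z|=r\}\times\pdom'$, hence analytic in $p$. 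Newton's identities then express each $\alpha_j(p)$ as a polynomial in $\sigma_1(p),\ldots,\sigma_m(p)$, making $\alpha_j$ analytic on $\pdom'$. At $p=0$ all $\zeta_j(0)=0$, so $\alpha_j(0)=0$; hence $u$ is a Weierstrass polynomial of degree $m$ in the sense of Definition~\ref{def:Weierstrass}.

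Finally I would produce $h$ and settle uniqueness. Set $\Omega'=\{|z|<r\}$ and use the Cauchy-type representation
\[ h(z,p) \;=\; \frac{1}{2\pi\imath}\oint_{|\zeta|=r}\frac{f(\zeta,p)/u(\zeta,p)}{\zeta-z}\,\dop\zeta,\qquad (z,p)\in\Omega'\times\pdom', \]
which is legitimate since $u(\zeta,p)\neq 0$ on $|\zeta|=r$; the integrand is jointly analytic, so $h$ is analytic on $\Omega'\times\pdom'$. Evaluating at $(0,0)$ gives $h(0,0)=\lim_{z\to 0}f(z,0)/z^m\neq 0$, since by construction $f(z,0)=z^m g(z)$ for an analytic $g$ with $g(0)\neq 0$. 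Multiplying out and comparing with $f/u$ shows $f=u@h$ on $\Omega'\times\pdom'$. For uniqueness, suppose $f=u_1 h_1 = u_2 h_2$ with each $h_i(0,0)\neq 0$; shrinking $\pdom'$ if necessary, each $h_i$ is nonvanishing on $\Omega'\times\pdom'$, so for every fixed $p$ the zero sets (with multiplicities) of $u_1(\cdot,p)$ and $u_2(\cdot,p)$ coincide with those of $f(\cdot,p)$. Both $u_i(\cdot,p)$ are monic polynomials in $z$ of the same degree $m$ (determined by $f$ itself, as above), so they agree, and consequently $h_1=h_2$.

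The main obstacle is the analyticity in $p$ of the coefficients $\alpha_j$. The individual branches $\zeta_j(p)$ are typically multivalued, with branch points where two zeros collide; attempting to track them directly would produce Puiseux-type singularities rather than analytic functions. The contour-integral representation of $\sigma_k(p)$ is exactly the trick that bypasses this: any permutation of $\zeta_1,\ldots,\zeta_m$ leaves each $\sigma_k$ fixed, and the integral on the right depends only on the unordered set of zeros inside $|z|<r$, not on any labelling. Once this single-valued analytic dependence is in hand, the Weierstrass polynomial structure, nonvanishing of $h$ at the origin, and uniqueness all follow quickly.
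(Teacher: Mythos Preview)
The paper does not supply a proof of Theorem~\ref{theorem:Weierstrass}; it is quoted as a classical result from several complex variables, with references to \cite{ebeling2007,gunning2015}. So there is nothing to compare against. Your argument is essentially the standard proof one finds in those references: count zeros via the argument principle, recover the power sums $\sigma_k(p)$ as contour integrals (hence analytic in $p$), pass to the coefficients $\alpha_j(p)$ via Newton's identities, and build $h$ from a Cauchy integral. It is correct.

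Two places deserve one more sentence of justification. First, when you write ``Multiplying out and comparing with $f/u$ shows $f=u\,h$'', the underlying point is that for each fixed $p$ the quotient $f(\cdot,p)/u(\cdot,p)$ has only removable singularities in $|z|<r$, because by construction the zeros of $u(\cdot,p)$ coincide with those of $f(\cdot,p)$ inside the disk, multiplicities included; your Cauchy integral is then precisely this holomorphic extension, and $f=u\,h$ follows. Second, in the uniqueness step you may need to shrink $\Omega'$ as well as $\pdom'$, both to keep each $h_i$ nonvanishing and to ensure all zeros of each $u_i(\cdot,p)$ remain inside the disk (Proposition~\ref{proposition:ContZeros} in the paper is exactly this observation); once that is arranged, your zero-set comparison forces $u_1=u_2$ as monic polynomials and hence $h_1=h_2$.
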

The Weierstrass preparation theorem extends the single-variable decomposition~\eqref{eq:svWPT} to multivariate analytic functions, replacing $z^{k}$ with the Weierstrass polynomial $u(z,p)$ introduced in \eqref{eq:WeierstrassPolynomial}. The following lemma clarifies this connection, showing that the order of the zero of $f(\cdot,0)$ must match the polynomial degree of $u(\cdot,0)$.
\begin{lemma}
    \label{lemma:WeierstrassZeroOrder}
    In the context of Theorem~\ref{theorem:Weierstrass}, if $f(\cdot,0)$ has a zero of order $k$ at $0$, then the polynomial degree of the unique Weierstrass polynomial $u$ in \eqref{eq:WPT} is $k$.
\end{lemma}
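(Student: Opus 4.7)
The plan is to pin down the degree $d$ of the Weierstrass polynomial $u$ in \eqref{eq:WPT} by restricting everything to the slice $p=0$ and reading off the order of vanishing of $f(\cdot,0)$ at $z=0$.

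First I would evaluate the factorization $f(z,p) = u(z,p)\,h(z,p)$ at $p=0$. By Definition~\ref{def:Weierstrass}, the coefficients $\alpha_j$ of the Weierstrass polynomial $u$ all satisfy $\alpha_j(0)=0$, so
\begin{equation*}
    u(z,0) = z^{d} + \alpha_{d-1}(0)\,z^{d-1} + \cdots + \alpha_0(0) = z^{d},
\end{equation*}
where $d$ denotes the (unknown) degree of $u$. Hence
\begin{equation*}
    f(z,0) = z^{d}\,h(z,0).
\end{equation*}

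Next I would use that $h(0,0)\neq 0$ from Theorem~\ref{theorem:Weierstrass}. This guarantees that $h(\cdot,0)$ is analytic in a neighborhood of $z=0$ and does not vanish there, i.e., it is a unit in the local ring of analytic functions at $0$. Consequently, the order of the zero of $f(\cdot,0)$ at $z=0$ equals exactly $d$. Combined with the hypothesis that this order is $k$, this forces $d=k$, which is precisely the claim.

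The argument is essentially bookkeeping: the only real inputs are (i) the vanishing of the $\alpha_j$ at $0$, which collapses $u(z,0)$ to the monomial $z^d$, and (ii) the non-vanishing of $h(0,0)$, which prevents cancellation in the product. I do not foresee any obstacle; uniqueness of $u$ from Theorem~\ref{theorem:Weierstrass} is not even required, only its existence and the two structural properties just listed.
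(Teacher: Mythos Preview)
Your proposal is correct and follows essentially the same approach as the paper: restrict to $p=0$, use $\alpha_j(0)=0$ to reduce $u(z,0)$ to $z^d$, and invoke $h(0,0)\neq 0$ to conclude that the order of vanishing of $f(\cdot,0)$ at $0$ is exactly $d$. The paper makes the last step more explicit by writing $h(z,0)=z^{k-d}\widehat{h}(z)$ and arguing both inequalities $k-d\le 0$ and $k-d\ge 0$ separately, but this is just an unpacking of your ``unit in the local ring'' observation.
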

\begin{proof}
    First, note that $f(\cdot,0)$ is analytic in the neighborhood $\Omega'$ of $0$. Since $f(\cdot,0)$ has a zero of order $k$ at $0$, there exists a function $\widehat{h} : \C \rightarrow \C$ analytic and nonzero in $\Omega'$ such that $f(z,0) = z^k \widehat{h}(z)$. Now apply the Weierstrass preparation theorem to obtain $f(z,p) = u(z,p) h(z,p)$ as in Theorem~\ref{theorem:Weierstrass}, where $u$ is Weierstrass polynomial of degree $\widehat{k}$. Note that $u(z,0) = z^{\widehat{k}}$ (see Definition~\ref{def:Weierstrass}). In particular, for $z$ near $0$,
    \begin{equation*}
        f(z,0) = z^k \widehat{h}(z) = z^{\widehat{k}} h(z,0),
    \end{equation*}
    which can be rearranged to give
    \begin{equation*}
        h(z,0) = z^{k - \widehat{k}}@@\widehat{h}(z).
    \end{equation*}
We must have $k - \widehat{k} \le 0$; otherwise we would have $h(0,0) = 0$ (which is not true for $h$ originating from the Weierstrass preparation theorem). Similarly, we must have $k - \widehat{k} \geq 0$; otherwise $h(\cdot,0)$ would have a singularity at $0$ (which is not true, since Theorem~\ref{theorem:Weierstrass} states that $h$ is analytic near $(0,0)$). Thus, we must have $\widehat{k} = k$.
\end{proof}
Theorem~\ref{theorem:Weierstrass} can be recast for a zero located at an arbitrary $(z_0,p_0) \in \C\times\C$.
\begin{corollary}
    \label{corollary:shiftedWeierstrass}
    Suppose $f: \C\times \C\rightarrow \C$ is analytic in a neighborhood $\Omega \times \pdom$ of $(z_0,p_0)$, $f(z_0,p_0) = 0$ and $f(\cdot,p_0) \not\equiv 0$ in $\Omega$. There exists a neighborhood $\Omega' \times \pdom'$ of $(z_0,p_0)$, a unique Weierstrass polynomial $u$ at $(z_0,p_0)$ analytic in $\Omega' \times \pdom'$, and a function $h$ analytic in $\Omega' \times \pdom'$ with $h(z_0,p_0) \neq 0$ such that for all $(z,p) \in \Omega'\times \pdom'$,
    \begin{equation*}
        f(z,p) = u(z,p)@h(z,p).
    \end{equation*}
\end{corollary}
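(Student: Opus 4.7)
The plan is to reduce this corollary directly to the unshifted Weierstrass preparation theorem (Theorem~\ref{theorem:Weierstrass}) via a translation of coordinates. I would first define the shifted function $\widetilde{f}(z,p) := f(z + z_0, p + p_0)$, which is analytic on a neighborhood of $(0,0)$ obtained by translating $\Omega \times \pdom$. By construction $\widetilde{f}(0,0) = f(z_0,p_0) = 0$, and the hypothesis $f(\cdot, p_0) \not\equiv 0$ translates to $\widetilde{f}(\cdot, 0) \not\equiv 0$ on the translated $\Omega$-neighborhood of $0$. Thus $\widetilde{f}$ satisfies the hypotheses of Theorem~\ref{theorem:Weierstrass}.

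Applying Theorem~\ref{theorem:Weierstrass} to $\widetilde{f}$ yields neighborhoods $\widetilde{\Omega}' \times \widetilde{\pdom}'$ of $(0,0)$, a unique Weierstrass polynomial $\widetilde{u}$ analytic there, and an analytic function $\widetilde{h}$ with $\widetilde{h}(0,0) \neq 0$ such that $\widetilde{f}(z,p) = \widetilde{u}(z,p)\,\widetilde{h}(z,p)$. I then translate back: set $\Omega' := \widetilde{\Omega}' + z_0$, $\pdom' := \widetilde{\pdom}' + p_0$, and define $u(z,p) := \widetilde{u}(z - z_0, p - p_0)$ and $h(z,p) := \widetilde{h}(z - z_0, p - p_0)$. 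Then $u$ is by definition a Weierstrass polynomial at $(z_0, p_0)$ (see Definition~\ref{def:Weierstrass}), $h$ is analytic on $\Omega' \times \pdom'$ with $h(z_0, p_0) = \widetilde{h}(0,0) \neq 0$, and $f(z,p) = \widetilde{f}(z - z_0, p - p_0) = u(z,p)\,h(z,p)$ on $\Omega' \times \pdom'$.

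For uniqueness, suppose $f = u_1 h_1 = u_2 h_2$ are two such decompositions on some common neighborhood of $(z_0, p_0)$, where $u_1, u_2$ are Weierstrass polynomials at $(z_0, p_0)$ and $h_1, h_2$ are analytic with nonzero values at $(z_0, p_0)$. Translating each factor by $(-z_0, -p_0)$ produces two decompositions of $\widetilde{f}$ of the form required by Theorem~\ref{theorem:Weierstrass}, so the uniqueness clause there forces the translated pieces to agree, hence $u_1 = u_2$ and $h_1 = h_2$.

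There is no real obstacle here: the result is essentially a change of variables applied to Theorem~\ref{theorem:Weierstrass}, and the only things to verify are that the translation preserves analyticity (trivial), that the hypothesis $\widetilde{f}(\cdot,0) \not\equiv 0$ survives (immediate from the assumption $f(\cdot, p_0) \not\equiv 0$), and that the definition of a shifted Weierstrass polynomial in Definition~\ref{def:Weierstrass} is exactly tailored to make the translated $\widetilde{u}$ correspond to a Weierstrass polynomial at $(z_0,p_0)$.
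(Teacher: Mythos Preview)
Your proposal is correct and matches the paper's approach: the paper states this corollary without proof, simply remarking that Theorem~\ref{theorem:Weierstrass} ``can be recast for a zero located at an arbitrary $(z_0,p_0)$,'' which is exactly the translation-of-coordinates argument you wrote out in full.
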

The Weierstrass preparation theorem is a local result about $(z_0,p_0)$.  We extend it to the case where $f$ could have a finite number of distinct zeros in $\Omega\times \pdom$.
\begin{lemma}
    \label{lemma:generalizedWeierstrass}
    Suppose $f: \C\times \C \rightarrow \C$ is analytic in a domain $\Omega \times \pdom$, and $f(\cdot,p)$ has exactly $m$ zeros (summing orders) for all $p \in \pdom$. There exists a unique polynomial
    \begin{equation*}
        u(z,p) = z^m + \alpha_{m-1}(p) z^{m-1} + \cdots + \alpha_{1}(p) z + \alpha_{0}(p)
    \end{equation*}
    with coefficients $\alpha_j : \C \rightarrow \C$ analytic in $\pdom$ and a function $h$ analytic and nonzero in $\Omega \times \pdom$ such that for all $(z,p) \in \Omega \times \pdom$,
    \begin{equation*}
        f(z,p) = u(z,p)@h(z,p).
    \end{equation*}
\end{lemma}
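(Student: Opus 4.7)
The plan is to construct $u$ locally by multiplying the Weierstrass polynomials produced at each zero of $f(\cdot, p_0)$, patch these local constructions together using uniqueness, and then verify that $h := f/u$ is analytic and nonzero throughout $\Omega \times \pdom$.

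First, I would fix an arbitrary $p_0 \in \pdom$ and let $z_1, \ldots, z_s \in \Omega$ be the distinct zeros of $f(\cdot, p_0)$ with orders $k_1, \ldots, k_s$, which by hypothesis satisfy $k_1 + \cdots + k_s = m$. (If $s=0$ the argument is trivial: $f$ is nonvanishing near $p_0$ and we take $u \equiv 1$ locally.) For each $j$, I apply Corollary~\ref{corollary:shiftedWeierstrass} at $(z_j, p_0)$ to get an open neighborhood $\Omega_j \times \pdom_j$ of $(z_j, p_0)$, a Weierstrass polynomial $u_j$ at $(z_j, p_0)$ of degree $k_j$ (by Lemma~\ref{lemma:WeierstrassZeroOrder}), and an analytic nonzero $h_j$ on $\Omega_j \times \pdom_j$ such that $f = u_j h_j$ there. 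Shrink the $\Omega_j$ to be pairwise disjoint and set $\widetilde{\pdom} := \bigcap_j \pdom_j$; by Proposition~\ref{proposition:ContZeros} I can shrink $\widetilde{\pdom}$ further so that, for every $p \in \widetilde{\pdom}$, all $k_j$ zeros of $u_j(\cdot, p)$ lie in $\Omega_j$. Because the total number of zeros of $f(\cdot, p)$ in $\Omega$ is fixed at $m = \sum_j k_j$, these local factorizations jointly account for \emph{all} zeros of $f(\cdot, p)$ in $\Omega$, and any other point of $\Omega$ is a nonvanishing point of $f(\cdot, p)$ for $p \in \widetilde{\pdom}$.

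Define the local candidate $u^{(p_0)}(z, p) := \prod_{j=1}^s u_j(z, p)$, a monic polynomial in $z$ of degree $m$ whose coefficients are analytic in $p \in \widetilde{\pdom}$ (as products and sums of analytic functions). By construction, the zeros of $u^{(p_0)}(\cdot, p)$ with multiplicity coincide exactly with the zeros of $f(\cdot, p)$ in $\Omega$ for $p \in \widetilde{\pdom}$. The elementary symmetric functions of the zeros of $f(\cdot,p)$ depend only on $p$ and on the (multi\-)set of zeros, so any two such local candidates $u^{(p_0)}$ and $u^{(p_0')}$ agree on the overlap of their parameter neighborhoods. This local uniqueness immediately glues the $u^{(p_0)}$ into a globally defined monic polynomial $u(z, p) = z^m + \alpha_{m-1}(p) z^{m-1} + \cdots + \alpha_0(p)$ on $\Omega \times \pdom$ with each $\alpha_j$ analytic on $\pdom$, since analyticity is local. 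Global uniqueness of $u$ is automatic: any other such polynomial would have to have the same zeros with the same multiplicities for each $p$.

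Finally, I set $h(z, p) := f(z, p)/u(z, p)$. To check that $h$ extends analytically and nonzero to all of $\Omega \times \pdom$, work locally: on $\Omega_j \times \widetilde{\pdom}$ one has
\[
h(z, p) \;=\; \frac{f(z,p)}{u_j(z,p) \prod_{i \ne j} u_i(z,p)} \;=\; \frac{h_j(z,p)}{\prod_{i \ne j} u_i(z,p)},
\]
and the denominator is analytic and nonzero on $\Omega_j \times \widetilde{\pdom}$ because the zeros of each $u_i$ lie in the disjoint set $\Omega_i$; together with $h_j \ne 0$ this gives analyticity and nonvanishing on each $\Omega_j \times \widetilde{\pdom}$. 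Away from the zero set of $f(\cdot,p)$, both $f$ and $u$ are nonvanishing and analytic, so $h$ is analytic and nonzero there as well.

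The main obstacle is the global patching: even if individual roots $\lambda_j(p)$ of $f(\cdot,p)$ exhibit branch-point behavior in $p$ (as is typical when zeros collide), the \emph{elementary symmetric functions} that form the coefficients $\alpha_j(p)$ are single-valued and must be shown to be analytic. My plan handles this through the Weierstrass-produced local factor $u^{(p_0)}$, whose analytic coefficients automatically coincide (by uniqueness) with these symmetric functions, bypassing any need to analyze the roots individually.
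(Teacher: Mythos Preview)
Your proof is correct and follows essentially the same route as the paper: fix a parameter value, apply the shifted Weierstrass preparation theorem at each zero to produce local Weierstrass polynomials $u_j$, multiply them to obtain a local monic degree-$m$ polynomial $u^{(p_0)}$, and then patch these local constructions across $\pdom$ using the fact that a monic polynomial is determined by its multiset of roots. The only cosmetic difference is that the paper phrases the patching as analytic continuation along paths in the connected domain $\pdom$, whereas you invoke the elementary symmetric functions to argue that overlapping local candidates agree and hence glue; these are equivalent formulations of the same uniqueness mechanism.
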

\begin{proof}
    Fix some $\pi \in \pdom$. Denote the distinct zeros of $f(\cdot,\pi)$ in $\Omega$ as $z_1,\ldots,z_k$ with corresponding orders $m_1,\ldots,m_k$ such that $m_1 + \cdots + m_k = m$. Corollary~\ref{corollary:shiftedWeierstrass} states that for each $j=1,\ldots,k$ there exists a neighborhood $\Omega_j \times \pdom_j \subset \Omega \times \pdom$ of $(z_j,\pi)$, a function $u_j$ that is a Weierstrass polynomial of degree $m_j$ at $(z_j,\pi)$ and analytic in $\Omega_j \times \pdom_j$, as well as a function $h_j$ analytic in $\Omega_j \times \pdom_j$ satisfying $h_j(z_j,\pi) \neq 0$ such that
    \begin{equation*}
        f(z,p) = u_j(z,p)@h_j(z,p) \quad \text{for} \quad (z,p) \in \Omega_j \times \pdom_j.
    \end{equation*}
    For the functions $u_j$ and $h_j$, we can make two assumptions without loss of generality. First, we assume that for all $p \in \pdom_j$, all $m_j$ zeros of $u_j(\cdot,p)$ lie in $\Omega_j$. (This assumption is justified by Proposition~\ref{proposition:ContZeros} and the discussion immediately following it.) Further, we assume that $h_j$ is not only nonzero at $(z_j,\pi)$, but also nonzero on the entire set $\Omega_j \times \pdom_j$. (Since $h_j$ is analytic, it must be nonzero in some neighborhood of $(z_j,\pi)$.) To ensure that these two properties, together with all previously imposed properties of $u_j$ and $h_j$, hold simultaneously, we can shrink the neighborhood $\Omega_j \times \pdom_j$ of $(z_j,\pi)$ if necessary by taking intersections of finitely many admissible neighborhoods. Based on these assumptions, the zero sets of $f$ and $u_j$ on $\Omega_j \times \pdom_j$ coincide, meaning
    \begin{equation*}
        \left\{ (z,p) \in \Omega_j \times \pdom_j: f(z,p) = 0 \right\} = \left\{ (z,p) \in \Omega_j \times \pdom_j: u_j(z,p) = 0 \right\}.
    \end{equation*}
    Consider the set $\pdom_{\pi} = \bigcap_{j=1}^k \pdom_j$. Since each $\pdom_j$ is a neighborhood of $\pi$, $\pdom_{\pi}$ is a nonempty neighborhood of $\pi$. Further, $\pdom_{\pi} \subseteq \pdom_j$ for $j=1,\ldots,k$ and hence all $u_j$ are analytic in $\Omega \times \pdom_{\pi}$, since all $u_j$ are polynomial in the first variable. Their product
    \begin{equation}
        \label{eq:uproduct}
        u(z,p) = u_1(z,p) \cdots u_k(z,p)
    \end{equation}
    is analytic in $\Omega \times \pdom_{\pi}$ and $u(\cdot,p)$ has exactly $m_1 + \cdots + m_k = m$ (i.e., the sum of the degrees of $u_1,\ldots,u_k$) zeros that must coincide with the $m$ zeros of $f(\cdot,p)$ in $\Omega$ for all $p \in \pdom_{\pi}$. Overall, we see that
    \begin{equation*}
        h(z,p) = \frac{f(z,p)}{u(z,p)}
    \end{equation*}
    is well-defined, analytic, and nonzero in $\Omega \times \pdom_{\pi}$. Note that $u(\cdot,p)$ is a monic polynomial, as it is the product of the monic polynomials $u_j(\cdot,p)$. So far, we have shown that, given $\pi \in \pdom$ there exists a neighborhood $\pdom_\pi$ of $\pi$ and functions $h_{\pi}$ and $u_{\pi}$ analytic in $\Omega \times \pdom_{\pi}$ with $h_{\pi}$ nonzero in $\Omega \times \pdom_{\pi}$, $u_{\pi}(\cdot,p)$ a degree-$m$ monic polynomial for all $p \in \pdom_{\pi}$, and $f = u_{\pi} h_{\pi}$ in $\Omega \times \pdom_{\pi}$. We complete the proof via an analytic continuation argument. Consider any $\pi,\pi' \in \pdom$ such that $\pdom_{\pi} \cap \pdom_{\pi'} = \pdom' \neq \emptyset$. The representations $f(z,p) = u_{\pi}(z,p) h_{\pi}(z,p)$ and $f(z,p) = u_{\pi'}(z,p) h_{\pi'}(z,p)$ coincide in $\Omega \times \pdom'$.\ \ Further, for all $p \in \pdom'$, the zeros of $u_{\pi}(\cdot,p)$, $u_{\pi'}(\cdot,p)$, and $f(\cdot,p)$ in $\Omega$, as well as their orders, coincide. Since $u_\pi(\cdot,p)$ and $u_{\pi'}(\cdot,p)$ are monic polynomials, they are identical. Since this holds for all $p \in \pdom'$, we have that $u_{\pi} = u_{\pi'}$ in $\Omega \times \pdom'$. By analytic continuation we thus have $u_{\pi} = u_{\pi'}$ in $\Omega \times \left(\pdom_{\pi} \cup \pdom_{\pi'}\right)$; we can also conclude that $h_{\pi} = h_{\pi'}$ is nonzero and analytic in $\Omega \times \left(\pdom_{\pi} \cup \pdom_{\pi'}\right)$. Now for an arbitrary $\pi'' \in \pdom$ we can choose a finite-length path that connects $\pi$ and $\pi''$ (since $\pdom$ is a domain, and thus connected) and repeat a similar argument with intersecting neighborhoods along that path (covering the path with overlapping domains, and taking a finite subcover) to show that $u_{\pi} = u_{\pi''}$ and $h_{\pi} = h_{\pi''}$. Since $\pi''$ is an arbitrary point in $\pdom$, we ultimately get that $f = u_{\pi} h_{\pi}$ throughout $\Omega \times \pdom$. This argument also shows uniqueness of $u_{\pi}$ and $h_{\pi}$, as analytic continuations are unique.
\end{proof}
We will also appeal to the Weierstrass division theorem~\cite{ebeling2007,gunning2015}.

\begin{theorem}[Weierstrass division theorem]
    Suppose $f: \C\times \C \rightarrow \C$ is analytic in a neighborhood of $(0,0)$, and $u$ is a Weierstrass polynomial of degree $m$. There exist unique functions $g$, $r$ analytic in some neighborhood of $(0,0)$, where $r(\cdot,p)$ is a polynomial of degree less than $m$, such that
    \begin{equation*}
        f(z,p) = g(z,p)@u(z,p) + r(z,p).
    \end{equation*}
\end{theorem}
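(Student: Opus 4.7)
The plan is to construct $g$ and $r$ explicitly via a Cauchy-integral formula and then verify the stated properties. Since $u$ is a Weierstrass polynomial of degree $m$, we have $u(z,0) = z^m$, so the $m$ zeros of $u(\cdot,0)$ all lie at the origin. By Proposition~\ref{proposition:ContZeros} (continuity of roots in the coefficients), I can choose a radius $r>0$ so that $f$ is analytic in a neighborhood of the closed disk $|z|\le r$ and a neighborhood $\pdom'$ of $0$ in parameter space, on which $u(\cdot,p)$ has all $m$ of its zeros inside $|z|<r$ and no zeros on $|\zeta|=r$ for every $p\in\pdom'$.

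Next I would define
\begin{equation*}
   g(z,p) \;=\; \frac{1}{2\pi\imath}\oint_{|\zeta|=r} \frac{f(\zeta,p)}{u(\zeta,p)\,(\zeta-z)}\,\dop\zeta,\qquad r(z,p):=f(z,p)-g(z,p)\,u(z,p),
\end{equation*}
for $|z|<r$ and $p\in\pdom'$. Joint analyticity of $g$ in $(z,p)$ follows from the standard argument that the integrand is jointly analytic and the integration is over a fixed contour on which the denominator does not vanish. To see that $r(\cdot,p)$ is a polynomial of degree less than $m$, apply the ordinary Cauchy integral formula for $f(z,p)$ and subtract to obtain
\begin{equation*}
   r(z,p) \;=\; \frac{1}{2\pi\imath}\oint_{|\zeta|=r}\frac{f(\zeta,p)\,[u(\zeta,p)-u(z,p)]}{u(\zeta,p)\,(\zeta-z)}\,\dop\zeta.
\end{equation*}
Since $u(\cdot,p)$ is monic of degree $m$ in its first slot, the quotient $[u(\zeta,p)-u(z,p)]/(\zeta-z)$ is a polynomial in $z$ of degree at most $m-1$ with coefficients that are analytic in $\zeta$ and $p$; integrating absorbs the $\zeta$-dependence and leaves a polynomial in $z$ of degree at most $m-1$ with coefficients analytic in $p$, as required.

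For uniqueness, suppose $f=g_1 u + r_1 = g_2 u + r_2$ with each $r_i(\cdot,p)$ a polynomial of degree less than $m$. Then $(g_1-g_2)\,u = r_2-r_1$. For each fixed $p\in\pdom'$, $u(\cdot,p)$ has $m$ zeros (with multiplicity) in $|z|<r$, and these must also be zeros of $r_2(\cdot,p)-r_1(\cdot,p)$; as the latter is a polynomial of degree less than $m$, it must vanish identically in $z$, whence $g_1=g_2$ as well. The main obstacle, and the only nontrivial input beyond routine manipulation of Cauchy integrals, is the simultaneous control of neighborhoods: we must choose a single contour radius $r$ and a single $p$-neighborhood $\pdom'$ so that $u(\cdot,p)$ has no zero on $|\zeta|=r$ throughout $\pdom'$ (so the integrand is analytic and bounded) while also keeping all $m$ zeros of $u(\cdot,p)$ inside the disk (so that the polynomial identity argument for uniqueness sees all the roots). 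This is exactly what the continuity-of-roots proposition already cited in the text supplies.
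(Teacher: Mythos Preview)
Your proof is correct and is, in fact, the standard Cauchy-integral argument for the Weierstrass division theorem (essentially the proof one finds in the references the paper cites, e.g., Ebeling or Gunning--Rossi). Note, however, that the paper does \emph{not} supply its own proof of this statement: it is quoted as a classical result with a citation to \cite{ebeling2007,gunning2015} and then used as a black box in the subsequent lemmas. So there is nothing in the paper to compare your argument against; you have simply filled in what the paper leaves to the literature.

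One cosmetic point: you use the symbol $r$ both for the contour radius and for the remainder function $r(z,p)$, which makes the write-up a bit awkward to read (e.g., ``$r(\cdot,p)$ is a polynomial'' versus ``$|\zeta|=r$''). Renaming the radius to $\rho$ would clean this up. Otherwise the existence and uniqueness arguments are sound, and your use of Proposition~\ref{proposition:ContZeros} to pin down a common contour and parameter neighborhood is exactly the right way to handle the only delicate point.
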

The Weierstrass division theorem extends to a zero at an arbitrary $(z_0,p_0) \in \C\times \C$.
\begin{corollary}
    \label{corollary:shiftedWeierstrassDivision}
    Suppose $f: \C\times\C \rightarrow \C$ is analytic in a neighborhood of $(z_0,p_0)$, and $u$ is a Weierstrass polynomial of degree $m$ at $(z_0,p_0)$. There exist unique functions $g$, $r$ analytic in some neighborhood of $(z_0,p_0)$, where $r(\cdot,p)$ is a polynomial of degree less than $m$ such that
    \begin{equation*}
        f(z,p) = g(z,p)@u(z,p) + r(z,p).
    \end{equation*}
\end{corollary}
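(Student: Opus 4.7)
The plan is to reduce the corollary to the Weierstrass division theorem (stated immediately above) via the translation that centers $(z_0, p_0)$ at the origin. Define the shifted functions $\tilde{f}(z,p) := f(z + z_0, p + p_0)$ and $\tilde{u}(z,p) := u(z + z_0, p + p_0)$. Since $f$ is analytic in a neighborhood of $(z_0, p_0)$, the composition $\tilde{f}$ is analytic in a neighborhood of $(0,0)$. By Definition~\ref{def:Weierstrass}, the hypothesis that $u$ is a Weierstrass polynomial \emph{at} $(z_0,p_0)$ is exactly the statement that $\tilde{u}$ is a (standard, unshifted) Weierstrass polynomial of degree $m$.

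Applying the Weierstrass division theorem to $\tilde{f}$ and $\tilde{u}$ yields unique functions $\tilde{g}, \tilde{r}$ analytic in a neighborhood of $(0,0)$, with $\tilde{r}(\cdot, p)$ a polynomial in its first argument of degree less than $m$, satisfying $\tilde{f}(z,p) = \tilde{g}(z,p)\, \tilde{u}(z,p) + \tilde{r}(z,p)$. I then define $g(z,p) := \tilde{g}(z - z_0, p - p_0)$ and $r(z,p) := \tilde{r}(z - z_0, p - p_0)$. Substituting $z \mapsto z - z_0$, $p \mapsto p - p_0$ in the identity above gives $f(z,p) = g(z,p) u(z,p) + r(z,p)$ in a neighborhood of $(z_0, p_0)$, and $g$ and $r$ inherit analyticity from $\tilde{g}$ and $\tilde{r}$. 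Since any polynomial in $(z - z_0)$ of degree less than $m$ is also a polynomial in $z$ of degree less than $m$ (expand $(z-z_0)^k = \sum_{j=0}^k \binom{k}{j} z^j (-z_0)^{k-j}$), the requested degree condition on $r(\cdot, p)$ is preserved automatically.

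Uniqueness is inherited directly from the base theorem: any second decomposition $f = g' u + r'$ with the stated properties pulls back via the same shift to a second decomposition $\tilde{f} = \tilde{g}' \tilde{u} + \tilde{r}'$ in which $\tilde{r}'(\cdot, p)$ remains a polynomial in $z$ of degree less than $m$; the uniqueness clause of the Weierstrass division theorem then forces $\tilde{g}' = \tilde{g}$ and $\tilde{r}' = \tilde{r}$, from which $g' = g$ and $r' = r$ follow. There is no substantive obstacle in this proof; the entire argument is a change of variables, and the only verification worth stating explicitly is that the polynomial degree condition survives the translation $z \mapsto z - z_0$.
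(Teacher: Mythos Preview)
Your proof is correct and matches the paper's approach: the paper states this corollary without proof, treating it as the obvious shift of the Weierstrass division theorem via $z \mapsto z - z_0$, $p \mapsto p - p_0$, which is precisely the argument you have written out.
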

We next generalize the Weierstrass division theorem for several distinct zeros.
\begin{lemma}
    \label{lemma:f2division}
    Suppose $f, u : \C\times\C \rightarrow \mathbb{C}$ are analytic in the domain $\Omega \times \pdom$, and that for each $p\in\pdom$, $u(\cdot,p)$ is a polynomial of degree $m$. Further, suppose that for each fixed $\pi \in \Pi$ there exist points $z_1,\ldots,z_k \in \Omega$ and analytic functions 
    $u_1,\ldots,u_k : \C\times\C \rightarrow \mathbb{C}$ that are Weierstrass polynomials at $(z_1,\pi)$, $\ldots$, $(z_k,\pi)$, such that
    \begin{equation*}
        u(\cdot,p) = u_1(\cdot,p)\cdots u_k(\cdot,p)
    \end{equation*}
    for all $p$ in some neighborhood of $\pi$. Note that $k$ may depend on $\pi$. Then there exist unique analytic functions $g, r : \Omega \times \pdom \rightarrow \C$ such that for each $p \in \pdom$,
    the function $r(\cdot,p)$ is a polynomial of degree less than $m$, and
    \begin{equation*}
        f(z,p) = g(z,p)@u(z,p) + r(z,p).
    \end{equation*}
\end{lemma}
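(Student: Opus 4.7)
The plan is to prove uniqueness first by a short polynomial argument, then build the pair $(g,r)$ locally in $p$ via a Hermite-style Cauchy integral identity, and finally glue the local pieces into a globally analytic pair using uniqueness.

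\textbf{Uniqueness.} Suppose $f = g_1 u + r_1 = g_2 u + r_2$ are two valid representations. Then $(g_1 - g_2)\,u = r_2 - r_1$. For each fixed $p$, the left-hand side vanishes on the $m$ zeros of $u(\cdot,p)$ to orders matching their multiplicities, so the same is true of $r_2 - r_1$. Being a polynomial in $z$ of degree less than $m$ with at least $m$ zeros (counted with multiplicity), $r_2 - r_1$ must be identically zero, and consequently $g_1 = g_2$.

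\textbf{Local existence via a Cauchy integral.} Fix $\pi \in \pdom$. The hypothesis, combined with Proposition~\ref{proposition:ContZeros} applied to each local factor $u_j$, yields a neighborhood $\pdom_\pi \subset \pdom$ of $\pi$ and a compact set $K \subset \Omega$ containing every zero of $u(\cdot,p)$ for $p \in \pdom_\pi$. Choose a cycle $\gamma \subset \Omega \setminus K$ with winding number one around each point of $K$ and on which $u(\zeta,p)$ never vanishes for $p \in \pdom_\pi$. Set
\begin{equation*}
    r(z,p) \;=\; \frac{1}{2\pi\imath}\oint_\gamma \frac{u(\zeta,p) - u(z,p)}{u(\zeta,p)\,(\zeta - z)}\, f(\zeta,p)\, \dop\zeta.
\end{equation*}
Since the kernel $(u(\zeta,p) - u(z,p))/(\zeta - z)$ is a polynomial in $z$ of degree $m-1$ with coefficients depending analytically on $(\zeta,p)$, $r$ is a polynomial in $z$ of degree less than $m$ whose coefficients are analytic on $\pdom_\pi$; in particular $r$ extends to all of $\Omega \times \pdom_\pi$. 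Rearranging Cauchy's integral formula, by writing $1/(\zeta-z) = [u(\zeta,p)-u(z,p)]/[u(\zeta,p)(\zeta-z)] + u(z,p)/[u(\zeta,p)(\zeta-z)]$, gives
\begin{equation*}
    f(z,p) \;=\; r(z,p) + u(z,p)\, g_{\mathrm{loc}}(z,p), \qquad g_{\mathrm{loc}}(z,p) \;=\; \frac{1}{2\pi\imath}\oint_\gamma \frac{f(\zeta,p)}{u(\zeta,p)\,(\zeta - z)}\, \dop\zeta,
\end{equation*}
for all $z$ in the interior of $\gamma$, with $g_{\mathrm{loc}}$ jointly analytic there. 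Outside the interior of $\gamma$ (but within $\Omega$), $u(\cdot,p)$ is nonvanishing, so $(f-r)/u$ is analytic there; at zeros of $u(\cdot,p)$ inside $\gamma$, the identity $f - r = u\, g_{\mathrm{loc}}$ with $g_{\mathrm{loc}}$ analytic shows that $(f-r)/u$ has only removable singularities. Defining $g(z,p) = (f(z,p) - r(z,p))/u(z,p)$ and extending by removable singularities therefore yields an analytic function on $\Omega \times \pdom_\pi$ satisfying $f = g u + r$.

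\textbf{Global assembly and main obstacle.} On any overlap $\pdom_\pi \cap \pdom_{\pi'}$, uniqueness forces the two locally constructed pairs to agree, so they glue into a single pair $(g,r)$ analytic on $\Omega \times \pdom$. The principal technical obstacle I anticipate is the \emph{uniform-in-$p$} choice of the contour $\gamma$: one needs the zeros of $u(\cdot,p)$ to stay in a fixed compact subset of $\Omega$, and to stay away from $\gamma$, as $p$ varies over a neighborhood of $\pi$. The assumed local Weierstrass factorization $u = u_1 \cdots u_k$ is precisely what delivers this, through the continuity of polynomial roots captured by Proposition~\ref{proposition:ContZeros} applied to each factor $u_j$.
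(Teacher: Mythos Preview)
Your proof is correct and takes a genuinely different route from the paper's.  The paper proceeds by \emph{iterated Weierstrass division}: it fixes $\pi$, applies Corollary~\ref{corollary:shiftedWeierstrassDivision} to divide $f$ by the first factor $u_1$, obtaining $f = g_1 u_1 + r_1$; then divides $g_1$ by $u_2$, and so on, back-substituting at the end to exhibit $f = g_k\,(u_1\cdots u_k) + r$ with $r$ of the correct degree.  Extension of each quotient $g_j$ from a small neighborhood of $(z_j,\pi)$ to all of $\Omega$ is argued by noting that $(f-r_1)/u_1$, $(g_1-r_2)/u_2$, \dots\ have only removable singularities.  Uniqueness is then shown by the same degree argument you give, and the local pieces are glued by analytic continuation along paths in $\pdom$.

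Your construction bypasses the Weierstrass division theorem entirely: the Hermite--Cauchy integral delivers $r$ and $g$ in one shot, and the factorization hypothesis enters only through Proposition~\ref{proposition:ContZeros}, to confine the zeros of $u(\cdot,p)$ to a fixed compact set so that a single contour $\gamma$ (a union of small circles about the $z_j$, which is null-homologous in $\Omega$ even without simple connectivity) works uniformly in $p$.  This is more explicit and arguably cleaner; it also makes the joint analyticity of $g$ transparent, since $g$ is patched from the integral $g_{\mathrm{loc}}$ on the disks and $(f-r)/u$ on $\{u\neq 0\}$, both jointly analytic.  The paper's recursive approach, by contrast, stays closer to the standard several-complex-variables toolkit and makes the degree bound on $r$ emerge mechanically from the recursion rather than from the divided-difference structure of the kernel.
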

\begin{proof}
    Let $\pi \in \pdom$, and assume $u_1,\ldots,u_k$ are Weierstrass polynomials at $(z_1,\pi)$, $\ldots$, $(z_k,\pi)$. We then apply Corollary~\ref{corollary:shiftedWeierstrassDivision} to the function $f$ and $u_1$ to obtain
    \begin{equation}
        \label{eq:fdivision}
        f(z,p) = g_1(z,p)@u_1(z,p) + r_1(z,p),
    \end{equation}
    where $g_1$ and $r_1$ are analytic in a neighborhood $\Omega_1 \times \pdom_1$ of $(z_1,\pi)$. Note that $r_1(\cdot,p)$ is a polynomial, and hence $r_1$ is analytic in $\Omega \times \pdom_1$. By rearranging \eqref{eq:fdivision}, we see that the function $g_1$, analytic on $\Omega_1\times \pdom_1$, can be written as
    \begin{equation*}
        g_1(z,p) = \frac{f(z,p) - r_1(z,p)}{u_1(z,p)}.
    \end{equation*}
    Thus for all $p \in \pdom_1$ the zeros of the polynomial $u_1(\cdot,p)$ must cancel with the zeros of the analytic function $f(\cdot,p) - r_1(\cdot,p)$ in $\Omega_1$. Further, we assume (without loss of generality, based on Proposition~\ref{proposition:ContZeros}) that all zeros of $u_1(\cdot,p)$ lie in $\Omega_1$ for all $p \in \pdom_1$. Thus, $(f(z,p) - r_1(z,p)) / u_1(z,p)$ is not only analytic in $\Omega_1 \times \pdom_1$, but also on the larger domain $\Omega \times \pdom_1$. Hence, $g_1(z,p)$ is analytic in $\Omega \times \pdom_1$. We move to the next zero $(z_2,\pi)$ and apply Corollary~\ref{corollary:shiftedWeierstrassDivision} to $g_1$ and $u_2$ to obtain
    \begin{equation}
        \label{eq:gdivision}
        g_1(z,p) = g_2(z,p)@u_2(z,p) + r_2(z,p),
    \end{equation}
    where $g_1,g_2,r_2$ are all analytic on $\Omega \times \pdom_2$ for some neighborhood $\pdom_2$ of $\pi$.
    Repeat this process to obtain $g_{j-1}(z,p) = g_j(z,p) u_j(z,p) + r_j(z,p)$ for $j=2,\ldots,k$ with $g_1,\ldots,g_k,r_1,\ldots,r_k$ all analytic on $\Omega \times \pdom_{\pi}$, where $\pdom_{\pi} = \bigcap_{j=1}^k \pdom_k \subset \pdom$ is a neighborhood of $\pi$. Substitute $g_1$ in \eqref{eq:fdivision} with the decomposition of $g_1$ in \eqref{eq:gdivision} to get 
    \begin{equation*}
        f(z,p) = \big(g_2(z,p) u_2(z,p) + r_2(z,p)\big) u_1(z,p) + r_1(z,p).
    \end{equation*}
    Recursively replacing $g_{j-1}$ in the above equation with $g_{j-1}(z,p) = g_j(z,p) u_j(z,p) + r_j(z,p)$ for $j=3,\ldots,k+1$ gives
    \begin{equation*}
    \begin{aligned}
            f(z,p)=\underbrace{g_k(z,p)}_{\mbox{$=:g(z,p)$}} &\underbrace{u_k(z,p)\cdots u_1(z,p)}_{\mbox{$=:u(z,p)$}} \\
            &{} +\underbrace{r_k(z,p)u_{k-1}(z,p)\cdots u_1(z,p)+ \cdots +r_2(z,p)u_1(z,p)+r_1(z,p)}_{\mbox{$=:r(z,p)$}}.
    \end{aligned}
    \end{equation*}
    This expression amounts to
    \begin{equation*}
        f(z,p) = g(z,p) u(z,p) + r(z,p),
    \end{equation*}
    where $g,r$ are analytic in $\Omega \times \pdom_{\pi}$ and $r(\cdot,p)$ is a polynomial with degree less than $m$ for all $p \in \pdom_{\pi}$. Hence, we have shown so far that a sought-after decomposition $f = g_{\pi} u + r_{\pi}$ exists locally around $\pi$. We complete the proof by an analytic continuation argument. Consider $\pi, \pi' \in \pdom$ such that $\pdom_{\pi} \cap \pdom_{\pi'} = \pdom' \neq \emptyset$. On $\Omega \times \pdom'$ the two representations $f = g_{\pi} u + r_{\pi}$ and $f = g_{\pi'} u + r_{\pi'}$ coincide. Hence, we have
    \begin{equation*}
        (g_{\pi} - g_{\pi'}) u = r_{\pi'} - r_{\pi}
    \end{equation*}
    on $\Omega \times \pdom'$. In particular, we have for all $p \in \pdom'$ that
    \begin{equation*}
         g_{\pi}(\cdot,p) - g_{\pi'}(\cdot,p) = \frac{r_{\pi'}(\cdot,p) - r_{\pi}(\cdot,p)}{u(\cdot,p)}
    \end{equation*}
    is analytic in $\Omega$.\ \ Thus either the zeros of $r_{\pi'}(\cdot,p) - r_{\pi}(\cdot,p)$ and $u(\cdot,p)$ must cancel (which is not possible because the degree of $u$ is higher and all of its zeros are in $\Omega$) or both sides of the equation are zero, which is the case here. This means that $g_{\pi}(\cdot,p) = g_{\pi'}(\cdot,p)$ and $r_{\pi}(\cdot,p) = r_{\pi'}(\cdot,p)$ for all $p\in \pdom'$.\ \ By analytic continuation we then get that $g_{\pi}=g_{\pi'}$ and $r_{\pi}=r_{\pi'}$ on $\pdom_{\pi} \cup \pdom_{\pi'}$. This agreement also shows uniqueness of the derived decomposition, as analytic continuations are unique. For an arbitrary $\pi'' \in \pdom$ we can choose a path that connects $\pi$ and $\pi''$ (since $\pdom$ is a domain, and therefore connected) and repeat a similar argument with intersecting neighborhoods along that path to show that $g_{\pi} = g_{\pi''}$ and $r_{\pi} = r_{\pi''}$. Since we can do this for any $\pi'' \in \pdom$, we ultimately get that $f = g_{\pi} u + r_{\pi}$ in the entire set $\Omega \times \pdom$.
\end{proof}
\subsection{Proof of Theorem~\ref{theorem:parametricKeldysh}}
\label{sec:parametricKeldyshProof}
\begin{proof}
     First, we note that we can write 
    \begin{equation} \label{eq:Tinv}
        \bT(z,p)^{-1} = \frac{\operatorname{adj}\left( \bT(z,p) \right)}{\det\left( \bT(z,p) \right)},
    \end{equation}
    where the adjugate $\operatorname{adj}\left( \bT(z,p) \right)$ is a matrix-valued analytic function in $\Omega \times \pdom$. Similarly, $\det(\bT(z,p))$ is a scalar-valued analytic function in $\Omega \times \pdom$ having exactly $m$ zeros (counting order) in $\Omega$ for all $p \in \pdom$ due to the assumption that $\bT$ has exactly $m$ eigenvalues (counting algebraic multiplicity) in $\Omega$ for all $p \in \pdom$. To it we can thus apply Lemma~\ref{lemma:generalizedWeierstrass}, giving the decomposition
    \begin{equation}
        \label{eq:detdecomposition}
        \det\left( \bT(z,p) \right) = u(z,p) @h(z,p),
    \end{equation}
    where $u(\cdot,p)$ is a polynomial, $h$ is analytic, and $h(z,p) \neq 0$ in $\Omega \times \pdom$. Thus the function
    \begin{equation*}
        \bP(z,p) = \frac{\operatorname{adj}\left( \bT(z,p) \right)}{h(z,p)}
    \end{equation*}
    is a matrix-valued function analytic in $\Omega \times \pdom$.\ \  Writing $u$ as the product of shifted Weierstrass polynomials, as introduced in \eqref{eq:uproduct}, we can  apply Lemma~\ref{lemma:f2division} to each entry of $\bP$ to obtain a decomposition of the form
    \begin{equation*}
        \bP(z,p) = \bN(z,p)@u(z,p) + \bR(z,p),
    \end{equation*}
    where $\bN$ and $\bR$ are unique functions analytic in $\Omega \times \pdom$ and $\bR(\cdot,p)$ is a matrix-valued polynomial for all $p \in \pdom$.\ \ Thus we can recast~\eqref{eq:Tinv} as
    \begin{equation*}
        \bT(z,p)^{-1} = \frac{\bP(z,p)}{u(z,p)} = \frac{\bN(z,p) u(z,p) + \bR(z,p)}{u(z,p)} = \bN(z,p) + \frac{\bR(z,p)}{u(z,p)}.
    \end{equation*}
    For each $p \in \pdom$, $u(\cdot,p)$ is a polynomial, as are the entries of $\bR(\cdot,p)$, and hence $\bR(\cdot,p)/u(\cdot,p)$ is a matrix-valued rational function and $\bN(\cdot,p)$ is analytic in $\Omega$.\ \ Lem\-ma~\ref{lemma:f2division} ensures that the degree of $\bR(\cdot,p)$ is strictly less than the degree of $u(\cdot,p)$, so
    \begin{equation} \label{eq:HRu}
        \bH(z,p) = \frac{\bR(z,p)}{u(z,p)}
    \end{equation}
   is a strictly proper rational function.  To connect this new form of $\bH(\cdot,p)$ 
    with the eigenvector matrices $\bV(p)$ and $\bW(p)$, and the Jordan matrix $\bJ(p)$, we seek to reconcile~\eqref{eq:HRu} with the pointwise Keldysh decomposition.\ \ For each fixed $p\in\pdom$, write the pointwise Keldysh decomposition (along the lines of Proposition~\ref{proposition:pointwiseKeldysh}, generalized based on Theorem~\ref{theorem:generalKeldysh}) as
    \begin{equation*}
        \bT(z,p)^{-1} = \tbH(z,p) + \tbN(z,p),
    \end{equation*}
    where
    \begin{equation} \label{eq:Htilde}
        \tbH(z,p) = \bV(p) ( z\bI - \bJ(p))^{-1} \bW(p)^*.
    \end{equation}
    For each fixed $p \in \pdom$, and let $z_1,\ldots,z_k \in \Omega$ denote the eigenvalues of $\bT(\cdot,p)$. For each $z_j$, choose an open disk $C_j$ centered at $z_j$ that is sufficiently small that $z_j$ is the only eigenvalue of $\bT(\cdot,p)$ in $C_j$, $\overline{C}_j\subset \Omega$, and the functions $\bN(\cdot,p)$ and $\tbN(\cdot,p)$ are analytic on $C_j$. Note that both $\bH(\cdot,p)$ and $\tbH(\cdot,p)$ are strictly proper rational functions. We will show that these functions coincide by comparing their Laurent series coefficients. First, for $j=1,\ldots,k$ and $i=0,\ldots,m-1$ consider the equivalent expressions
    \begin{align*}
        (z-z_j)^i@@\bT(z,p)^{-1} 
        &= (z-z_j)^i @@\bH(z,p) + (z-z_j)^i @@\bN(z,p) \\
        &= (z-z_j)^i @@\tbH(z,p) + (z-z_j)^i @@\tbN(z,p).
    \end{align*}
   Integrate the equivalent expressions on the right-hand side along $\partial C_j$, substituting~\eqref{eq:Htilde} in the latter case, to get
    \begin{equation}
        \label{eq:LaurentCoefficientIntegrals}
        \int_{\partial C_j} (z-z_j)^i@@ \bH(z,p)\, \dop z = \int_{\partial C_j} (z-z_j)^i @@\bV(p) ( z\bI - \bJ(p))^{-1} \bW(p)^*\, \dop z.
    \end{equation}
    In particular, the terms $(z-z_j)^i @@\tbN(z,p)$ and $(z-z_j)^i @@\bN(z,p)$ vanish due to Cauchy's theorem, since they are analytic. The integrals in \eqref{eq:LaurentCoefficientIntegrals} are exactly the Laurent series coefficients of the principal parts of $\bH(\cdot,p)$ and $\tbH(\cdot,p)$ centered at $z_j$, and hence the principal parts of the rational functions $\bH(\cdot,p)$ and $\tbH(\cdot,p)$ coincide. We note that the polynomial part of the Laurent series for $\bH(\cdot,p)$ and $\tbH(\cdot,p)$ must be zero, because they are strictly proper rational functions. Overall, we can therefore conclude that $\bH(\cdot,p)$ and $\tbH(\cdot,p)$ must coincide in $\Omega$, and therefore, in particular, we have that
    \begin{equation*}
        \bH(z,p) = \bV(p) ( z\bI - \bJ(p))^{-1} \bW(p)^*
    \end{equation*}
    for all $p \in \pdom$.\ \ The statement about analyticity of $\bV,\bW$ and $\bJ$ (when all eigenvalues are simple) follows from the implicit function theorem, as discussed in \cite{andrew1993}.
\end{proof}

\bibliographystyle{plainurl}
\bibliography{references}

\end{document}